\documentclass{amsart}
\author{Christian Frank}
\author{A. Salch}
\usepackage{shortsalch}
\usepackage{xy, tikz-cd}
\xyoption{all}
\title{CW-complexes in the Category of Small Categories}
\date{}
\DeclareMathOperator{\CW}{{\rm CW}}
\begin{document}
\begin{abstract}
We compute the collection of CW-complexes in the model category of small categories constructed by Joyal and Tierney. More generally, if $X$ is a connected topological space, we show that the homotopy category of CW-complexes in Joyal-Tierney's model category of sheaves of sets on $X$ is equivalent to the homotopy category of groupoids. As an application of the ideas, we show that the algebraic $K$-theory groups of the category of pointed small categories are trivial, and more generally, the algebraic $K$-theory groups of any sufficiently ``nice'' Waldhausen category $\mathcal{A}$ of pointed small categories also vanishes, regardless of finiteness conditions assumed on the objects of $\mathcal{A}$. The vanishing of this $K$-theory implies that there is no nontrivial Euler characteristic defined on pointed small categories and satisfying certain niceness axioms.

%The notion of a model structure provides a category with a notion of both homotopy and weak equivalence.  In the category of spaces, the model structure allows the $(n + 1)$-sphere $S^{n + 1}$ to be computed categorically from the $n$-sphere $S^n$ up to weak equivalence by taking suspensions as homotopy pushouts.  Arbitrary CW-complexes can then be computed up to weak equivalence once the spheres are known.  This process can be imitated to construct formal CW-compexes in other model categories, although this does require a choice of $0$-sphere.  In this paper we study the CW-complexes which can be constructed on the Joyal-Tierney model structure, a generalization of the unique model structure on the category of small categories whose weak equivalences are literal equivalences of categories. 
\end{abstract}
\maketitle

\section{Introduction}

In the category of unpointed spaces, the suspension of a space $X$ is weakly equivalent to the homotopy pushout of the diagram 
\begin{center}
 \begin{tikzcd}
  X \arrow{d} \arrow{r} &\pt\\
  \pt
 \end{tikzcd}
\end{center}
where $\pt$ is the terminal object, i.e., the one-point space.
As the $(n + 1)$-sphere $S^{n + 1}$ is the suspension of the $n$-sphere $S^n$, this gives a purely model-categorical method of inductively constructing spaces weakly equivalent to spheres, starting from the zero-sphere $S^0 = \pt \coprod \pt$, a coproduct of two copies of the terminal object.  Homotopy pushouts also provide a model-categorical method of attaching cells, and consequently, of building CW-complexes: to glue an $(n + 1)$-disc to a space $X$ by a map $f: S^n\to X$, we form the diagram
\begin{center}
 \begin{tikzcd}
  S^n \arrow{d} \arrow{r}{f} &X\\
  \pt
 \end{tikzcd}
\end{center}
and take its homotopy pushout. % This provides a purely model-categorical construction of $CW$-complexes, and invites application to other model categories.  
It is natural to ask how much of the classical theory of CW-complexes can be generalized from the setting of topological spaces to other model categories of fundamental importance.

In this paper, we examine the formal CW-complexes that can be built when using the {\em Joyal-Tierney model structure,} which we now explain. There exists a model structure on the category $\SmCat$ of small categories in which the weak equivalences are simply the familiar categorical equivalences, and the cofibrations are the functors which are injective on objects; this model structure is a special case of the general {\em Joyal-Tierney model structure}, from~\cite{MR1097495}, on internal categories in a Grothendieck topos, whose fibrant objects form a certain model for stacks (see~\cite{MR3274499} for more results on that connection). It is a folklore result\footnote{To the best of our knowledge, this result was never published, but several writeups including proofs are available on the Web, for example~\cite{joyaltierneyuniqueness}.} that there exists exactly one model structure on the category of small categories for which the weak equivalences are the categorical equivalences. % More generally, in the paper~\cite{MR1097495}, Joyal and Tierney construct a certain natural model structure on the category $\Cat\Sh(\mathcal{C},\tau)$ of internal categories in a Grothendieck topos $\Sh(\mathcal{C},\tau)$. (The familiar case $\SmCat = \Cat\Sh(\mathcal{C},\tau)$ is recovered by letting the Grothendieck topos $\Sh(\mathcal{C},\tau)$ simply be the category of sets.)
So the Joyal-Tierney model structure on $\SmCat$ is an extremely fundamental one: it is the only model structure on $\SmCat$ whose associated homotopy theory is exactly the study of small categories up to weak equivalence. It is a natural question to ask for a characterization of the CW-complexes in this model category.

In this paper we compute the CW-complexes\footnote{Our definition of a CW-complex in a general model category is really only defined {\em up to weak equivalence}; see the comment preceding Definition~\ref{def of spheres} for some explanation of this. Consequently, when comparing our results with the classical results for topological spaces, a more precise statement is that we compute the {\em weak equivalence types} of all CW-complexes in $\Cat\Sh(\Zar(X))$.} in the Joyal-Tierney model category $\Cat\Sh(\Zar(X))$ of internal categories in the Grothendieck topos $\Sh(\Zar(X))$ of sheaves of sets on a connected topological space $X$.
Our results are as follows: 
\begin{itemize}
\item
In the special case of greatest interest, $X = \pt$, the category
$\Cat\Sh(\Zar(X))$ is simply the category $\SmCat$ of small categories.
In Theorem~\ref{1-complexes} we prove that the $1$-dimensional CW-complexes in $\SmCat$ are precisely the groupoids with the property that the automorphism group of each object is a free group.
In Theorem~\ref{2-complexes} we prove that the $2$-dimensional CW-complexes in $\SmCat$ are precisely the groupoids.
In Corollary~\ref{n-complexes} we prove that each CW-complex in $\SmCat$ is weakly equivalent to a $2$-dimensional CW-complex, and in Corollary~\ref{main cor on cw-cplxs 1} we conclude that the category of CW-complexes in $\SmCat$ is precisely the category of groupoids.

In classical homotopy theory, the category of topological spaces homotopy equivalent to a CW-complex was heavily studied (see e.g.~\cite{MR0100267}). As a consequence of our Corollary~\ref{n-complexes}, we know that, if one asks what {\em small categories} have the homotopy type\footnote{``Homotopy type'' and ``weak homotopy type'' are equivalent notions in the Joyal-Tierney model structure on $\SmCat$, since every object in this model category is both cofibrant and fibrant, so every weak equivalence has a homotopy inverse, as in Lemma~4.24 of~\cite{MR1361887}.} of CW-complexes in small categories, the answer is very simple: it is precisely the groupoids.
\item
In Theorem~\ref{connected space corollary} we prove that,
if $X$ is a connected topological space and if we write $\Zar(X)$ for the site of open subsets of $X$,
 then the homotopy category of CW-complexes in $\Cat\Sh(\Zar(X))$ is equivalent to the homotopy category of CW-complexes on $\SmCat$, i.e., the homotopy category of CW-complexes in $\Cat\Sh(\Zar(X))$ is equivalent to the homotopy category of groupoids. 
(More specifically: Theorem~\ref{main thm on cw-cplxs} shows that the CW-complexes in $\Cat\Sh(\Zar(X))$ are, up to weak equivalence, simply the ``constantifications'' of the CW-complexes in internal categories in sheaves on a point, i.e., the CW-complexes in $\SmCat$.)

The categories of CW-complexes we consider in this paper are not themselves model categories, although they embed in the model categories constructed by Joyal and Tierney. Consequently our equivalences of homotopy categories of CW-complexes do not arise from Quillen equivalences. It is worth pointing out that Theorem~\ref{connected space corollary} is {\em not true} if one omits the restriction to CW-complexes; i.e., the categories $\Cat(\Sh(\Zar(X))$ and $\SmCat$ are not equivalent, and their homotopy categories are not equivalent; one only arrives at an equivalence after restricting to the homotopy category of CW-complexes in each category.
\item 
The cofibrant pointed objects $\mathcal{C}^{cof}_*$ in a model category $\mathcal{C}$ possess the structure of a Waldhausen category, and hence have associated algebraic $K$-theory groups $K_n(\mathcal{C}^{cof}_*) = \pi_n\left( \Omega \left| wS_{\bullet}\mathcal{C}^{cof}_*\right|\right)$, as in~\cite{MR802796}. In Theorem~\ref{k-thy vanishing thm 2}, we show that the algebraic $K$-theory groups $K_n(\SmCat_*)$ vanish for all $n$. This vanishing theorem uses homotopical properties of suspension in $\SmCat_*$, and does not use an ``Eilenberg swindle.''
Consequently it holds more generally, for all reasonable sub-Waldhausen-categories of $\SmCat_*$, and in Theorem~\ref{k-thy vanishing thm 2} we prove the following:
suppose $\mathcal{A}$ is a full sub-Waldhausen category of $\SmCat_*$ 
which satisfies the conditions:
\begin{enumerate}
\item pushouts along cofibrations computed in $\mathcal{A}$ agree with the same pushouts computed in $\SmCat_*$, and
\item if $\mathcal{C}$ is an object of $\SmCat_*$ which is contained in $\mathcal{A}$, then the category is also contained in $\mathcal{A}$ which has object set $\ob\mathcal{C}$ and which has exactly one morphism $X\rightarrow Y$ for each ordered pair $X,Y$ in $\ob\mathcal{C}$.
\end{enumerate}
Then the algebraic $K$-theory groups 
$K_n(\mathcal{A})$ vanish for all $n$.
\end{itemize}

In~\cite{MR2393085}, Leinster constructs a notion of the ``Euler characteristic'' of a finite category satisfying certain hypothesis (namely, the finite category must admit a ``weighting'' and a ``coweighting''). Proposition~2.8 in~\cite{MR2393085} (see also~\cite{MR2804514} for the analogous result in the more general setting of Euler characteristics for categories developed in~\cite{MR2739781}) establishes a kind of additivity for Leinster's Euler characteristic. One can ask if there exists an Euler characteristic $\chi$ on pointed finite categories which satisfies the natural ``cofiber-additivity'' given by cofiber sequences in the Joyal-Tierney model structure---i.e., such that, if
\[ \xymatrix{ \mathcal{C} \ar[r] \ar[d] & \mathcal{D} \ar[d] \\ 1 \ar[r] & \mathcal{E} }\]
is a homotopy pushout square in $\SmCat_*$, then $\chi(\mathcal{D}) = \chi(\mathcal{C}) + \chi(\mathcal{E})$. As a corollary of our Theorem~\ref{k-thy vanishing thm 2}, if $\mathcal{A}$ is a full sub-Waldhausen-category of $\SmCat_*$ satisfying the two conditions described above, then {\em the only cofiber-additive Euler characteristic on $\mathcal{A}$ is identically zero}---since any such Euler characteristic must factor through the universal Euler characteristic landing in $K_0(\mathcal{A}) \cong 0$.

\section{CW-complexes in a model category.}

The following is one of the usual definitions of a homotopy pushout; see~\cite{MR1361887}, for example.
\begin{definition}
Let $\mathcal{C}$ be a model category. We say that an object $W$ of $\mathcal{C}$ is {\em the homotopy pushout} of a given diagram
\begin{equation}\label{diag 4308} \xymatrix{ X \ar[r] \ar[d] & Z  \\ Y  }\end{equation}
if diagram~\eqref{diag 4308} fits into a commutative diagram
\[\xymatrix{
 \tilde{X} \ar[r] \ar[d]\ar@/^{3ex}/[rr] & \tilde{Z} \ar[d]\ar@/_1ex/[rr] & X \ar[r]\ar[d] & Z \\
 \tilde{Y} \ar[r] \ar@/^{3ex}/[rr] & W & Y &  
}\]
in which the left-hand square is a pushout square, the curved arrows are acyclic fibrations, 
the maps $\tilde{X} \rightarrow \tilde{Z}$ and $\tilde{X} \rightarrow \tilde{Y}$ are cofibrations,
and $\tilde{X}$ is cofibrant.
\end{definition}
We write ``the'' homotopy pushout of~\eqref{diag 4308}, although it is not a well-defined isomorphism type; however, it is well-defined up to weak equivalence (see section~10 of~\cite{MR1361887}, for example).

\begin{definition}\label{def of one-sided htpy pushout}
Let $\mathcal{C}$ be a model category,
let $f: X \rightarrow Z$ and $g: X \rightarrow Y$ be maps in $\mathcal{C}$. Suppose that $X$ is cofibrant.
We say that an object $W$ of $\mathcal{C}$ is {\em the one-sided homotopy pushout of $f$ along $g$}
if $f$ and $g$ fit into a commutative diagram
\begin{equation}\label{htpy pushout diagram}\xymatrix{
 X \ar[r]_f \ar[d]\ar@/^{3ex}/[rr]^{\id} & Z \ar[d]\ar@/_1ex/[rr]_<<<{\id} & X \ar[r]\ar[d]^g & Z \\
 \tilde{Y} \ar[r] \ar@/^{3ex}/[rr] & W & Y & 
}\end{equation}
in which the left-hand square is a pushout square, the map $\tilde{Y}\rightarrow Y$ is an acyclic fibration, and
the map $X \rightarrow \tilde{Y}$ is a cofibration.
%and $\tilde{X}$ is cofibrant.
\end{definition}
Note that Definition~\ref{def of one-sided htpy pushout} is asymmetric: the one-sided homotopy pushout $W$ of $f: X \rightarrow Z$ along $g: X \rightarrow Y$ is equipped with a canonical map $Z \rightarrow W$, but not with a canonical map $Y \rightarrow W$. However, the ordinary homotopy pushout does not have this asymmetry, and in Proposition~\ref{one-sided homotopy pushouts} we show that the one-sided homotopy pushout is weakly equivalent to the ordinary homotopy pushout in any left-proper model category. As an immediate consequence, in a left proper model category, the one-sided homotopy pushout is symmetric up to weak equivalence. Proposition~\ref{one-sided homotopy pushouts} is easy and well-known; we do not know where it appears in the literature, but its proof is a pleasant exercise which we leave to the reader.
%...but we do not know where it appears in the literature, so we include a proof.
\begin{prop}\label{one-sided homotopy pushouts}
Let $\mathcal{C}$ be a left proper model category, that is, every pushout of a weak equivalence along a cofibration in $\mathcal{C}$ is a weak equivalence. 
Then every one-sided homotopy pushout in $\mathcal{C}$ is weakly equivalent to the homotopy pushout.
\end{prop}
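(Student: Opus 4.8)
The plan is to manufacture a single, conveniently chosen model of the ordinary homotopy pushout out of the very same data that defines the given one-sided homotopy pushout, and then to connect the two objects by a single pushout square to which left properness applies directly. First I would set up notation. Suppose $W$ is the one-sided homotopy pushout of $f\colon X\rightarrow Z$ along $g\colon X\rightarrow Y$, as in Definition~\ref{def of one-sided htpy pushout}; so there is a cofibration $j\colon X\rightarrow \tilde{Y}$ and an acyclic fibration $q\colon \tilde{Y}\rightarrow Y$ with $qj=g$, and $W=Z\sqcup_X\tilde{Y}$ is the pushout of $Z\xleftarrow{f}X\xrightarrow{j}\tilde{Y}$. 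Since $X$ is cofibrant, in constructing an ordinary homotopy pushout of the diagram $Z\xleftarrow{f}X\xrightarrow{g}Y$ I am free to take $\tilde{X}=X$ together with the identity as its acyclic fibration.

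Next I would factor $f$ as $X\xrightarrow{i}\tilde{Z}\xrightarrow{p}Z$ with $i$ a cofibration and $p$ an acyclic fibration, and form $W'=\tilde{Z}\sqcup_X\tilde{Y}$, the pushout along the two cofibrations $i$ and $j$. One checks immediately that $W'$ satisfies the requirements in the definition of the homotopy pushout: its apex $\tilde{X}=X$ is cofibrant, the maps $\tilde{X}\rightarrow X$, $p\colon\tilde{Z}\rightarrow Z$, and $q\colon\tilde{Y}\rightarrow Y$ are acyclic fibrations compatible with $f$ and $g$, the legs $i$ and $j$ are cofibrations, and the pushout is taken along cofibrations. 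Thus $W'$ represents the homotopy pushout. Because the homotopy pushout is well-defined up to weak equivalence, it now suffices to exhibit a weak equivalence between $W'$ and the given one-sided homotopy pushout $W$.

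Finally I would invoke the pasting law for pushouts together with left properness. Composing the pushout square defining $W'$ with the map $p\colon\tilde{Z}\rightarrow Z$ exhibits $W=Z\sqcup_X\tilde{Y}$ as $Z\sqcup_{\tilde{Z}}W'$, that is, as the pushout of $W'\longleftarrow\tilde{Z}\xrightarrow{p}Z$, since the outer rectangle has legs $X\xrightarrow{j}\tilde{Y}$ and $X\xrightarrow{p i}Z=X\xrightarrow{f}Z$. In this lower square the leg $\tilde{Z}\rightarrow W'$ is a cofibration, being a cobase change of the cofibration $j$, while $p$ is an acyclic fibration and in particular a weak equivalence. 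Left properness therefore guarantees that the pushout of the weak equivalence $p$ along the cofibration $\tilde{Z}\rightarrow W'$ is again a weak equivalence; but this pushout is exactly the canonical comparison map $W'\rightarrow W$. Hence $W\simeq W'$, so the one-sided homotopy pushout is weakly equivalent to the homotopy pushout.

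The entire content is concentrated in this last step: left properness is precisely what permits collapsing the ``doubly fattened'' pushout $\tilde{Z}\sqcup_X\tilde{Y}$ down to the one-sided $Z\sqcup_X\tilde{Y}$ without altering the weak equivalence type, and without it the comparison map need not be a weak equivalence. I expect the only delicate bookkeeping to be the correct application of the pasting law and the verification that $\tilde{Z}\rightarrow W'$ is a cofibration so that left properness is applicable; both are formal consequences of the closure properties of cofibrations and the universal property of pushouts.
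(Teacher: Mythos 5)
Your argument is correct and coincides with the proof the authors had in mind (which they leave as an exercise): factor $f$ as a cofibration $X\rightarrow\tilde{Z}$ followed by an acyclic fibration $\tilde{Z}\rightarrow Z$, form the genuine homotopy pushout $\tilde{Z}\sqcup_X\tilde{Y}$, observe via the pasting law that the one-sided homotopy pushout $Z\sqcup_X\tilde{Y}$ is the pushout of the acyclic fibration $\tilde{Z}\rightarrow Z$ along the cofibration $\tilde{Z}\rightarrow\tilde{Z}\sqcup_X\tilde{Y}$, and conclude by left properness. All the verifications you flag as delicate (the pasting-law identification and the cobase-change argument showing $\tilde{Z}\rightarrow W'$ is a cofibration) go through exactly as you describe.
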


In Definition~\ref{def of spheres} we present some straightforward generalizations\footnote{We have chosen a definition of ``spheres in a model category'' which is only well-defined up to weak equivalence, not up to isomorphism. Technically this is not exactly a generalization of the classical notion of spheres in topological spaces, since of course the $n$-sphere is well-defined up to homeomorphism. It is possible to give a more ``rigid,'' up-to-isomorphism definition of spheres in a model category, but requires (inductively) making choices of cofibrations from spheres into contractible objects, and the statements of the results later in the paper are slightly more convoluted-looking if we instead use the ``rigid'' definition. So we prefer to use the up-to-weak-equivalence definition of spheres. (The zero-sphere, however, is well-defined up to isomorphism.)} of the usual notions of spheres and CW-complexes to settings much more general than the classical setting of topological spaces:
\begin{definition}\label{def of spheres}
Let $\mathcal{C}$ be a model category whose terminal object $1$ is cofibrant. 
\begin{itemize}
\item By the  {\em $0$-sphere in $\mathcal{C}$}, written $S^0$, we mean the coproduct $1 \coprod 1$ of two copies of the terminal object $1$ of $\mathcal{C}$. % CHANGED TO BE ``RIGID''
\item
If $n$ is a positive integer, then 
an object $X$ of $\mathcal{C}$ is called an {\em $n$-sphere in $\mathcal{C}$} if $X$ can be written as a homotopy pushout
of a diagram of the form
\[\xymatrix{ Y \ar[r]\ar[d] & 1  \\ 1 & }\]
where $Y$ is an $(n-1)$-sphere in $\mathcal{C}$.
We will sometimes write $S^n$ for the weak equivalence class of the $n$-spheres inside the category of cofibrant objects of $\mathcal{C}$.
\item 
An object $X$ of $\mathcal{C}$ is called a {\em $0$-dimensional CW-complex in $\mathcal{C}$} if $X$ is cofibrant and a coproduct of copies of the terminal object $1$. % CHANGED TO BE ``RIGID''
\item 
If $n$ is a positive integer, then 
an object $X$ of $\mathcal{C}$ is called an {\em $n$-dimensional CW-complex in $\mathcal{C}$} if $X$ can be written as a one-sided homotopy pushout of a map $\coprod_{i\in I} S_i \rightarrow Y$ along the map $\coprod_{i\in I} p_{S_i}: \coprod_{i\in I} S_i \rightarrow \coprod_{i\in I} 1$,
where $Y$ is an $(n-1)$-dimensional CW-complex in $\mathcal{C}$, 
 $I$ is a set,
$S_i$ is an $(n-1)$-sphere in $\mathcal{C}$ for each $i\in I$, and the map $p_{S_i}: S_i \rightarrow 1$ is the unique map to the terminal object.
The resulting map $ Y\rightarrow X$ is called an {\em $(n-1)$-skeleton inclusion.} 
\item 
An object $X$ of $\mathcal{C}$ is called a {\em finite-dimensional CW-complex in $\mathcal{C}$} if $X$ is an $n$-dimensional CW-complex for some $n$.
\begin{comment} (NOT CURRENTLY USED)
\item 
An object $X$ of $\mathcal{C}$ is called a {\em CW-complex in $\mathcal{C}$} if $X$ is cofibrant and weakly equivalent to the colimit of a sequence
\[ X_0 \rightarrow X_1 \rightarrow X_2 \rightarrow \dots \]
where each $X_n$ is an $n$-dimensional CW-complex and each map $X_n \rightarrow X_{n+1}$ is an $n$-skeleton inclusion.
\end{comment}
\end{itemize}
\end{definition}

\section{Joyal-Tierney model structures.}\label{section on j-t model structure}

\begin{convention}\label{convention on tilde}
Given categories $\mathcal{C},\mathcal{D}$ with pullbacks, and given 
a pullback-preserving functor $F: \mathcal{C} \rightarrow \mathcal{D}$,
we can apply $F$ levelwise to an internal category
\begin{equation}\label{int cat 1} \xymatrix{ \mathcal{O} \ar[r] & \mathcal{M}\ar@<1ex>[l] \ar@<-1ex>[l] &
 \mathcal{M}\times_{\mathcal{O}}\mathcal{M} \ar[l] }\end{equation}
in $\mathcal{C}$, yielding an internal category
\begin{equation}\label{int cat 2} \xymatrix{ F(\mathcal{O}) \ar[r] & F(\mathcal{M})\ar@<1ex>[l] \ar@<-1ex>[l] &
 F(\mathcal{M})\times_{F(\mathcal{O})}F(\mathcal{M}) \ar[l] }\end{equation}
in $\mathcal{D}$.
We adopt the convention that we will write $\tilde{F}$ for the evident functor $\tilde{F}: \Cat(\mathcal{C}) \rightarrow \Cat(\mathcal{D})$ which, on objects, sends~\eqref{int cat 1} to~\eqref{int cat 2}. 

So, for example, when we apply the global sections functor $\Gamma: \Sh(\mathcal{C},\tau) \rightarrow \Sets$ levelwise to an internal category, we are applying the functor $\tilde{\Gamma}: \Cat\Sh(\mathcal{C},\tau) \rightarrow \Cat\Sets = \SmCat$. Similarly, in Proposition~\ref{sheafification} and Definition~\ref{def of c} we introduce functors $\tilde{\#}$ and $\tilde{c}$ which are given by applying sheafification $\#$ and constantification $c$, respectively, levelwise to an internal category. 
% THIS CONVENTION COULD BE MOVED EARLIER IN THE PAPER
\end{convention}

Let $\triv$ denote the trivial site, i.e., the site of open subsets of the one-point topological space.
Since $\SmCat = \Cat\Sh(\triv)$, any statements we make for $\Cat\Sh(\mathcal{C},\tau)$ have corresponding statements for $\SmCat$ as a special case.

Recall that an {\em internal category} in a category $\mathcal{A}$ with finite limits is
a small category object in $\mathcal{A}$, i.e., a pair of objects $(\mathcal{O},{\mathcal{M}})$ in $\mathcal{A}$ and structure maps
\begin{eqnarray*} 
 \eta_L: {\mathcal{M}} & \rightarrow & \mathcal{O} \\
 \eta_R: {\mathcal{M}} & \rightarrow & \mathcal{O} \\
 \epsilon: \mathcal{O} & \rightarrow & {\mathcal{M}} \\
 \Delta: {\mathcal{M}} \times_{\mathcal{O}} {\mathcal{M}} & \rightarrow & {\mathcal{M}} \end{eqnarray*}
encoding domain of a morphism, codomain of a morphism, identity morphism on an object, and composition of a composable pair of morphisms, respectively, 
and making the inevitable diagrams commute which encode associativity and unitality of composition.
The pullback ${\mathcal{M}}\times_{\mathcal{O}} {\mathcal{M}}$ is taken using the two maps $\eta_L,\eta_R$ as the maps ${\mathcal{M}}\rightarrow {\mathcal{O}}$ in the defining pullback square.  We write $j_L$ for the projection of a composable pair to its left morphism and $j_R$ for the projection to its right morphism:
\begin{center}
 \begin{tikzcd}
  {\mathcal{M}}\times_{\mathcal{O}} {\mathcal{M}} \arrow[swap]{d}{j_L} \arrow{r}{j_R} &\mathcal{M} \arrow{d}{\eta_L}\\
  \mathcal{M} \arrow[swap]{r}{\eta_R} &\mathcal{O}
 \end{tikzcd}
\end{center}

Given an internal category $({\mathcal{O}},{\mathcal{M}})$, we have (see e.g. \cite{MR558105}) its coreflection $({\mathcal{O}},\Iso({\mathcal{O}},{\mathcal{M}}))$ in internal groupoids, and the canonical map
\begin{equation}\label{coreflector 132} ({\mathcal{O}}, {\mathcal{M}})\leftarrow ({\mathcal{O}}, \Iso({\mathcal{O}}, {\mathcal{M}})).\end{equation}
%(NOTE: I think this is true, and this is something we talked about last time we met, that the inclusion of groupoids into small categories has a left adjoint (I poked around on the Web briefly to look for some corroboration of this point, and the nLab page on ``free groupoid'' makes the same claim, so I think it's correct.) Here you are making an ``internalized'' version of this construction: you're using a left adjoint to the inclusion of internal groupoids into internal categories. This probably requires the ambient category to have at least certain colimits. Anyway, I think it's good and makes sense, but in the definition that immediately follows, you use the natural map \[ ({\mathcal{O}}, \Iso({\mathcal{O}}, {\mathcal{M}}))\rightarrow ({\mathcal{O}}, {\mathcal{M}}) ,\] the inclusion of the maximal subgroupoid of a small category, instead. This is a little bit narratively confusing. Does it make sense to remove the reference to the map $({\mathcal{O}}, {\mathcal{M}})\rightarrow ({\mathcal{O}}, \Iso({\mathcal{O}}, {\mathcal{M}}))$ here, or to move it to later in the paper?)
The following definitions are special cases of those in \cite{MR558105}; see also \cite{MR2210576}.
\begin{definition}\label{model structure}
Suppose $\mathcal{A}$ is a category with finite limits and $({\mathcal{O}}_0,{\mathcal{M}}_0)$ and $({\mathcal{O}}_1,{\mathcal{M}}_1)$ are internal categories in $\mathcal{A}$.
We say that a map $(f_{\mathcal{O}},f_{{\mathcal{M}}}): ({\mathcal{O}}_0,{\mathcal{M}}_0)\rightarrow ({\mathcal{O}}_1,{\mathcal{M}}_1)$ of internal categories is:
\begin{itemize}
\item {\em full and faithful} if the commutative square in $\mathcal{A}$
\[\xymatrix{
{\mathcal{M}}_0 \ar[d]^{\left( \eta_L,\eta_R\right)} \ar[r]^{f_{{\mathcal{M}}}} & {\mathcal{M}}_1 \ar[d]^{\left( \eta_L,\eta_R\right)} \\
{\mathcal{O}}_0\times {\mathcal{O}}_0 \ar[r]^{f_{\mathcal{O}} \times f_{\mathcal{O}}} & {\mathcal{O}}_1\times {\mathcal{O}}_1 }\]
is a pullback square,
\item {\em essentially surjective} if the composite map $\eta_R\circ\iota \circ p$ in the diagram
\[\xymatrix{
 {\mathcal{O}}_0\times_{{\mathcal{O}}_1} \Iso({\mathcal{O}}_1,{\mathcal{M}}_1) \ar[r]^{p} \ar[d] & \Iso({\mathcal{O}}_1,{\mathcal{M}}_1) \ar[r]^{\eta_R\circ\iota}\ar[d]^{\eta_L\circ\iota} & {\mathcal{O}}_1 \\
 {\mathcal{O}}_0 \ar[r]^{f_{\mathcal{O}}} & {\mathcal{O}}_1 & }\]
is a regular epimorphism in $\mathcal{A}$ where $\iota: \Iso({\mathcal{O}}_1,{\mathcal{M}}_1)\to{\mathcal{M}}_1$ is the insertion of isomorphisms into all morphisms, and
\item {\em an internal categorical equivalence} if it is full and faithful and essentially surjective.
\end{itemize}
\end{definition}

\begin{remark}\label{iota-def}
The map $\iota$ can be defined categorically when working in a category $\mathcal{E}$ with pullbacks.  Given an internal category $({\mathcal{O}},{\mathcal{M}})$ in $\mathcal{E}$, with structure maps $\eta_L$, $\eta_R$, $\epsilon$, and $\Delta$, and pullback projections $j_L$ and $j_R$ as described earlier, the pullback of $\epsilon$ along $\Delta$ gives a subobject $P({\mathcal{O}},{\mathcal{M}})$ of ${\mathcal{M}}\times_{\mathcal{O}}{\mathcal{M}}$.
\begin{center}
 \begin{tikzcd}
  P(\mathcal{O},\mathcal{M}) \arrow{d} \arrow{r}{\rho} &{\mathcal{M}}\times_{\mathcal{O}} {\mathcal{M}} \arrow{d}{\Delta}\\
  \mathcal{O} \arrow[swap]{r}{\epsilon} &\mathcal{M}
 \end{tikzcd}
\end{center}
(In sets $P({\mathcal{O}},{\mathcal{M}})$ is the pairs of morphisms which compose to an identity morphism.)  Let $\rho: P({\mathcal{O}},{\mathcal{M}})\to{\mathcal{M}}\times_{\mathcal{O}}{\mathcal{M}}$ be the projection of this pullback, and now take the following pullback.
\begin{center}
 \begin{tikzcd}
  Q(\mathcal{O},\mathcal{M}) \arrow{d} \arrow{r}{\rho'} &P(\mathcal{O},\mathcal{M}) \arrow{d}{j_L\circ\rho}\\
  P(\mathcal{O},\mathcal{M}) \arrow[swap]{r}{j_R\circ\rho} &\mathcal{M}
 \end{tikzcd}
\end{center}
(In sets $Q({\mathcal{O}},{\mathcal{M}})$ is the collection of $4$-tuples $(a,b,c,d)$ of morphisms for which the first pair $b\circ a = 1$ composes to an identity morphism, the last pair $d\circ c$ composes to an identity morphism, and the middle pair $b = c$ are equal.)  This object $Q(\mathcal{O},\mathcal{M})$ may be identified with $Iso({\mathcal{O}},{\mathcal{M}})$, and the diagonal composite $j_L\circ\rho\circ\rho'$ of this square (which in sets sends the tuple to the middle morphism $c$), may be identified with the map $\iota$.
\end{remark}
\iffalse is given by the epi-mono factorization
\begin{center}
 \begin{tikzcd}
  &Iso({\mathcal{O}},{\mathcal{M}}) \arrow[tail]{dr}{\iota}\\
  Q({\mathcal{O}},{\mathcal{M}}) \arrow[two heads]{ur}{k} \arrow[swap]{rr}{j_L\circ\rho\circ\rho'} &&{\mathcal{M}}
 \end{tikzcd}
\end{center}
of $j_L\circ\rho\circ\rho'$.

let $j: {\mathcal{M}}\times_{\mathcal{O}}{\mathcal{M}}\to{\mathcal{M}}\times{\mathcal{M}}$ be the canonical insertion of pullback into product, and let $\tau: {\mathcal{M}}\times{\mathcal{M}}\to{\mathcal{M}}\times{\mathcal{M}}$ be the twist map on the product.  The subobject $Q({\mathcal{O}},{\mathcal{M}})$ of $P({\mathcal{O}},{\mathcal{M}})$ of morphisms that compose to the identity under both orders of composition is then given as the pullback of $j\circ\rho$ with $\tau\circ j\circ\rho$; call the projection of $Q({\mathcal{O}},{\mathcal{M}})$ to $P({\mathcal{O}},{\mathcal{M}})$ $\rho'$.  We finally get that $\iota$ is given by the image of $\pi\circ\rho\circ\rho'$ under the epi-mono factorization, where $\pi$ can be either one of the projections of ${\mathcal{M}}\times_{\mathcal{O}}{\mathcal{M}}$ to ${\mathcal{M}}$.  (Which projection $\pi$ is chosen makes no difference because selecting the first morphism from all pairs of two-sided inverses is no different from selecting the second.)  The diagram is as follows.
\fi

The following model structure was constructed in \cite{MR1097495}.
\begin{theorem}\label{j-t thm} {\bf (Joyal-Tierney.)}
Suppose $(\mathcal{C},\tau)$ is a site. We denote the Grothendieck topos of sheaves of sets on $(\mathcal{C},\tau)$ as $\Sh(\mathcal{C},\tau)$. There exists
a cofibrantly generated model structure on $\Cat\Sh(\mathcal{C},\tau)$, called the
{\em Joyal-Tierney model structure}, defined as follows:
\begin{itemize}
\item The cofibrations are the maps of internal categories $({\mathcal{O}}_0,{\mathcal{M}}_0)\rightarrow ({\mathcal{O}}_1,{\mathcal{M}}_1)$ 
whose underlying map ${\mathcal{O}}_0\rightarrow {\mathcal{O}}_1$ is a monomorphism in the category $\Sh(\mathcal{C},\tau)$.
\item The weak equivalences are the internal categorical equivalences.
\item The fibrations are the maps with the right lifting property with respect to the acyclic cofibrations.
\end{itemize}
\end{theorem}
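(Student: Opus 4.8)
The plan is to produce the model structure by a recognition theorem for combinatorial model categories --- for instance J.\ Smith's theorem in the form: if $\mathcal{K}$ is a locally presentable category, $W \subseteq \mathcal{K}^{[1]}$ is an accessible class of morphisms closed under retracts and satisfying two-out-of-three, $I$ is a set of morphisms with $I\text{-inj}\subseteq W$, and $W \cap I\text{-cof}$ is closed under pushout and transfinite composition, then $\mathcal{K}$ carries a cofibrantly generated model structure whose weak equivalences are $W$ and whose cofibrations are $I\text{-cof}$. The first input is that $\Cat\Sh(\mathcal{C},\tau)$ is locally presentable: the Grothendieck topos $\Sh(\mathcal{C},\tau)$ is locally presentable, and an internal category is exactly a model in $\Sh(\mathcal{C},\tau)$ of the finite-limit (essentially algebraic) sketch whose data are the objects $\mathcal{O}$, $\mathcal{M}$, $\mathcal{M}\times_{\mathcal{O}}\mathcal{M}$ and whose operations are $\eta_L,\eta_R,\epsilon,\Delta$ subject to the associativity and unit equations; since the category of models of a finite-limit sketch valued in a locally presentable category is again locally presentable, so is $\Cat\Sh(\mathcal{C},\tau)$. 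In particular it is complete and cocomplete and every object is small, so the small object argument applies to any set of maps.

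Next I would verify that the class $W$ of internal categorical equivalences of Definition~\ref{model structure} satisfies two-out-of-three, is closed under retracts, and is accessible. Full-and-faithfulness is the assertion that the square in Definition~\ref{model structure} is a pullback; two-out-of-three and retract-closure for this follow from the usual composition and cancellation calculus of pullback squares, and the condition is accessible since it is a finite-limit condition. Essential surjectivity is the assertion that a certain composite built from the groupoid coreflection $\Iso(\mathcal{O},\mathcal{M})$ of~\eqref{coreflector 132} is a regular epimorphism; here I would internalize the classical argument, using that regular epimorphisms in a topos are the epimorphisms and are stable under pullback and closed under composition, together with the functoriality of $\Iso(-,-)$. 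Accessibility of $W$ then follows because both defining conditions are accessible, so $W$ is an accessible subcategory of the arrow category.

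For the generating cofibrations I would internalize the three classical generators of the folk model structure on $\SmCat$ --- adjoining an object, adjoining a single morphism between two prescribed objects, and identifying a parallel pair of morphisms --- now spread out over a set of generating monomorphisms of the topos $\Sh(\mathcal{C},\tau)$, which exists by local presentability. A direct lifting analysis then shows that $I\text{-inj}$ consists of the maps that are surjective on objects, full, and faithful; such maps are internal categorical equivalences, giving $I\text{-inj}\subseteq W$. A cell-decomposition argument (first building an arbitrary monomorphism on objects, then freely filling in morphisms and finally imposing relations among them) identifies $I\text{-cof}$ with the maps that are monomorphisms on objects, matching the cofibrations stated in the theorem. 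The generating acyclic cofibrations are the internalizations of the inclusion of one object into the ``walking isomorphism'' groupoid, again spread over the generating monomorphisms, and their right-lifting class is the class of internal isofibrations.

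The main obstacle is the last hypothesis of the recognition theorem: that $W \cap I\text{-cof}$ is closed under pushout and transfinite composition. Closure under transfinite composition follows from accessibility of $W$ together with the fact that full-and-faithfulness and essential surjectivity are preserved by the relevant filtered colimits along cofibrations. The genuinely hard part is closure under pushout, i.e.\ that the pushout of a trivial cofibration along an arbitrary cofibration is again a weak equivalence --- essentially a left-properness statement restricted to trivial cofibrations. I would prove this by reducing to pushouts of the generating acyclic cofibration (the walking-isomorphism inclusion) and analyzing these directly, exploiting the coreflection into internal groupoids of~\eqref{coreflector 132}: after coreflecting, the pushout becomes a pushout of internal groupoids adjoining a formal isomorphism, which can be checked to be an equivalence internally. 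Once this closure property is established, Smith's recognition theorem produces the model structure, and the description of the fibrations as the maps with the right lifting property against the acyclic cofibrations is then automatic.
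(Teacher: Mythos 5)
First, a point of comparison: the paper does not prove this theorem at all --- it is quoted from Joyal and Tierney's paper \cite{MR1097495} with the single sentence ``The following model structure was constructed in \cite{MR1097495}.'' So there is no in-paper proof to measure you against; the relevant comparison is with the original construction, which builds the two factorization systems explicitly inside the topos (free-isomorphism and mapping-cylinder constructions, together with the coreflection into internal groupoids of~\eqref{coreflector 132}) rather than invoking accessibility machinery. Your route through local presentability and a recognition theorem is a legitimately different, more modern architecture, and the first steps are sound: $\Cat\Sh(\mathcal{C},\tau)$ is the category of models of a finite-limit sketch in a locally presentable category, hence locally presentable, and the two defining clauses of Definition~\ref{model structure} (a pullback condition and a regular-epimorphism condition) are each accessible, so $W$ is an accessible class satisfying two-out-of-three and retract closure.

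There are, however, two genuine gaps in the proposal as written. First, Smith's theorem in the form you state it requires $W\cap I\text{-cof}$ to be closed under pushout along \emph{arbitrary} morphisms, not merely along cofibrations, and your proposed ``reduction to pushouts of the generating acyclic cofibrations'' does not discharge that hypothesis: $W\cap I\text{-cof}$ is not known in advance to be the cellular closure of any set, so checking the generators proves nothing about the whole class. What your reduction actually fits is the Kan-style recognition theorem (Hovey, Theorem~2.1.19): exhibit an explicit set $J$ (your internalized walking-isomorphism inclusions), prove $J\text{-cell}\subseteq W$ by the direct analysis you describe via the groupoid coreflection, and prove $J\text{-inj}\cap W = I\text{-inj}$; you should recast the endgame in that form, after which the computation you sketch is exactly the right one. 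Second, the identification of $I\text{-cof}$ with the maps whose object part is a monomorphism silently uses that the class of all monomorphisms of a Grothendieck topos is cofibrantly generated by a \emph{set} of monomorphisms. That is a theorem (due to Beke), not a formality, and your ``cell-decomposition argument'' does not supply it: in a general Grothendieck topos a monomorphism is not a cellular extension by finitely many shapes, and without this input your set $I$ need not generate all of the stated cofibrations. With those two repairs the strategy goes through and recovers the theorem, including cofibrant generation.
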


\begin{observation}\label{j-t is left-proper}
Recall ``Theorem B'' from C. Reedy's well-known thesis~\cite{reedy}: given a cofibration $i: A \rightarrow B$ and a weak equivalence $f: A \rightarrow C$, if $A$ is cofibrant, then the pushout map $B \rightarrow C\coprod_A B$ is also a weak equivalence. An easy consequence of Reedy's Theorem B is that a model category is left-proper if all its objects are cofibrant. (See the statement of Proposition~\ref{one-sided homotopy pushouts} for the definition of left-properness.)
Consequently the Joyal-Tierney model structure on $\Cat\Sh(\mathcal{C},\tau)$ is left-proper.
\end{observation}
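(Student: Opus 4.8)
The plan is to reduce the left-properness of the Joyal-Tierney model structure to the single assertion that \emph{every object of $\Cat\Sh(\mathcal{C},\tau)$ is cofibrant}, and then to invoke the form of Reedy's Theorem B quoted in the statement. Once we know that every object is cofibrant, left-properness is immediate: given a pushout square in which $f\colon A \rightarrow C$ is a weak equivalence and $i\colon A \rightarrow B$ is a cofibration, the hypothesis of Reedy's Theorem B---namely that the source $A$ of the span is cofibrant---is satisfied automatically, so the pushout map $B \rightarrow C\coprod_A B$ is a weak equivalence, which is exactly the defining condition for left-properness recorded in Proposition~\ref{one-sided homotopy pushouts}.

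So the real work is to show that every internal category $(\mathcal{O},\mathcal{M})$ in $\Sh(\mathcal{C},\tau)$ is cofibrant, i.e.\ that the unique map from the initial object of $\Cat\Sh(\mathcal{C},\tau)$ to $(\mathcal{O},\mathcal{M})$ is a cofibration. First I would identify the initial object of $\Cat\Sh(\mathcal{C},\tau)$: it is the internal category whose object-sheaf and morphism-sheaf are both the initial sheaf $\emptyset$ of the Grothendieck topos $\Sh(\mathcal{C},\tau)$, with all structure maps the unique maps out of $\emptyset$. By the description of the Joyal-Tierney cofibrations in Theorem~\ref{j-t thm}, the map $(\emptyset,\emptyset)\rightarrow (\mathcal{O},\mathcal{M})$ is a cofibration precisely when its underlying map on object-sheaves, the canonical map $\emptyset\rightarrow \mathcal{O}$, is a monomorphism in $\Sh(\mathcal{C},\tau)$.

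The key point is then the standard topos-theoretic fact that in any Grothendieck topos the initial object is \emph{strict}, and consequently the unique map $\emptyset \rightarrow \mathcal{O}$ out of the initial object is a monomorphism for every object $\mathcal{O}$. Concretely, to check that $\emptyset \rightarrow \mathcal{O}$ is monic it suffices to check that the pullback $\emptyset \times_{\mathcal{O}} \emptyset$ is again initial; but this pullback admits a projection to $\emptyset$, and strictness of the initial object forces any object equipped with a map to $\emptyset$ to be isomorphic to $\emptyset$, so $\emptyset \times_{\mathcal{O}} \emptyset \cong \emptyset$ and the map is monic. Hence $(\emptyset,\emptyset)\rightarrow (\mathcal{O},\mathcal{M})$ is a cofibration and $(\mathcal{O},\mathcal{M})$ is cofibrant, as required.

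I do not expect any serious obstacle: the only substantive ingredient is the strictness of the initial object in a Grothendieck topos, which is entirely standard, and everything else is a direct unwinding of the definition of the Joyal-Tierney cofibrations together with the quoted Theorem B. If one wished to avoid citing strictness altogether, one could instead note that monomorphisms in $\Sh(\mathcal{C},\tau)$ are exactly the sectionwise injections, together with the observation that each section-set of the initial sheaf is either empty or a singleton; in either case the component map $\emptyset(U)\rightarrow \mathcal{O}(U)$ is injective, which again exhibits $\emptyset \rightarrow \mathcal{O}$ as a monomorphism.
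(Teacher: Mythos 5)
Your argument is correct and follows the same route as the paper: left-properness is reduced via Reedy's Theorem~B to the claim that every object of $\Cat\Sh(\mathcal{C},\tau)$ is cofibrant. The paper leaves that cofibrancy claim implicit, whereas you supply the (standard and correct) justification that the initial object of a Grothendieck topos is strict, so the unique map $\emptyset\rightarrow\mathcal{O}$ on object-sheaves is a monomorphism and hence $(\emptyset,\emptyset)\rightarrow(\mathcal{O},\mathcal{M})$ is a cofibration in the sense of Theorem~\ref{j-t thm}.
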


\begin{remark}\label{j-t special case}
Let $(\mathcal{C},\tau)$ be the site of open subsets of a one-point space.
The resulting special case of Theorem~\ref{j-t thm} is of particular interest: Theorem~\ref{j-t thm} gives a model structure on $\SmCat$ such that
\begin{itemize}
\item its cofibrations are the functors which are injective on objects, 
\item its weak equivalences are the functors which induce an equivalence of categories,
\item and its fibrations are the isofibrations, i.e., the functors $F: \mathcal{X} \rightarrow \mathcal{Y}$ such that, for each object $X$ of $\mathcal{X}$ and each isomorphism $f: F(X) \stackrel{\cong}{\longrightarrow} Y$ in $\mathcal{Y}$, there exists an isomorphism $\tilde{f}: X \stackrel{\cong}{\longrightarrow} \tilde{Y}$ in $\mathcal{X}$ such that $F(\tilde{f}) = f$.\end{itemize}
\end{remark}

\begin{comment} (NO LONGER USED)
Recall, from e.g. \cite{MR1712872}, that a category $\mathcal{C}$ is said to be
{\em cofiltered} if, to every pair of objects $X,Y$ of $\mathcal{C}$, there exists some object $Z$ in $\mathcal{C}$ that maps to both $X$ and $Y$, and
to every pair of parallel maps $f,g: V \rightarrow W$ in $\mathcal{C}$,
there exists some map $h: U \rightarrow V$ such that $f\circ h = g\circ h$.
\begin{definition}
Suppose $(\mathcal{C},\tau)$ is a site. For a given object $U$ of $\mathcal{C}$, we will write
$J_U$ for the category of coverings $\{ U_i\longrightarrow U\}$ of $U$. 
We will say that $(\mathcal{C},\tau)$ is a {\em cofiltered site} if, for every object $U$ of $\mathcal{C}$, the category $J_U$ is cofiltered.
\end{definition}
\end{comment}

The following result is classical; see e.g. Theorem~III.5.1 in~\cite{MR1300636}.
\begin{prop}{\bf Sheafification.} 
\label{sheafification} Let $(\mathcal{C},\tau)$ be a site, with $\mathcal{C}$ small, and let $\mathcal{D}$
be a category with equalizers and small colimits. 
Write $\mathcal{D}\Sh(\mathcal{C},\tau)$ for the category of $\mathcal{D}$-valued sheaves on $(\mathcal{C},\tau)$.
The forgetful functor
\[ \forget: \mathcal{D}\Sh(\mathcal{C},\tau)\stackrel{\forget}{\longrightarrow} 
\mathcal{D}^{\mathcal{C}^{\op}}\]
has a left adjoint $\#$, which we call ``sheafification.'' The functor $\#$ commutes with finite limits.
%If $(\mathcal{C},\tau)$ is a cofiltered site and finite limits commute with filtered colimits in $\mathcal{D}$, then sheafification commutes with finite limits.
\end{prop}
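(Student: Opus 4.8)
The plan is to prove this by the Grothendieck \emph{plus construction}, carried out with values in $\mathcal{D}$ rather than in $\Sets$; in the case $\mathcal{D}=\Sets$ this specializes to the classical argument of~\cite{MR1300636}. First I would fix a presheaf $P\colon\mathcal{C}^{\op}\to\mathcal{D}$ and, for each object $U$ of $\mathcal{C}$ and each covering sieve $R$ of $U$, form the \emph{object of matching families} $\mathrm{Match}(R,P)$, namely the limit of $P$ over the category of elements of $R$, formed using the limits present in $\mathcal{D}$ together with the equalizers in the hypothesis. The covering sieves of $U$ form a poset under reverse inclusion, and this poset is directed: any two covering sieves $R,R'$ admit $R\cap R'$ as a common upper bound, since covering sieves are closed under intersection. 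I would then set $P^+(U)=\varinjlim_R \mathrm{Match}(R,P)$, a \emph{filtered} colimit in $\mathcal{D}$, make this assignment functorial in $U$, and record the comparison natural transformation $\eta\colon\id\Rightarrow(-)^+$ of endofunctors of $\mathcal{D}^{\mathcal{C}^{\op}}$.

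Next I would establish the three standard properties of the plus construction: that $P^+$ is always separated; that $P^+$ is a sheaf whenever $P$ is separated; and hence that $\#:=(-)^{++}$ takes values in $\mathcal{D}\Sh(\mathcal{C},\tau)$. The adjunction $\#\dashv\forget$ then follows formally once one checks that $\eta_F\colon F\to F^+$ is an isomorphism for every sheaf $F$: for a sheaf $F$ one obtains natural bijections $\mathrm{Hom}(P,\forget F)\cong\mathrm{Hom}(P^+,\forget F)\cong\mathrm{Hom}(P^{++},\forget F)\cong\mathrm{Hom}_{\mathcal{D}\Sh}(\#P,F)$, with unit given by the composite $P\to P^+\to P^{++}$. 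Every ingredient here uses only equalizers and filtered colimits in $\mathcal{D}$, both of which are supplied by the hypotheses.

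The one substantive point, and the step I expect to be the main obstacle, is that $\#$ commutes with finite limits. Finite limits of presheaves and of sheaves are computed objectwise in $U$, and $\mathrm{Match}(R,-)$, being itself a limit, automatically commutes with any finite limit. The crux is therefore the interchange of the filtered colimit $\varinjlim_R$ over covering sieves with finite limits: the argument requires that \emph{filtered colimits commute with finite limits in $\mathcal{D}$}. This holds in $\Sets$, which is the instance of the proposition that matters for this paper, since it is the one used to build $\tilde{\#}$, and there preservation of pullbacks is exactly what is needed to carry internal categories to internal categories. It is precisely here that the directedness of the system of covering sieves, coming from closure under intersection, does its work. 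Granting this interchange, one commutes $\varinjlim_R$ past the finite limit at each of the two applications of the plus construction to obtain $\#(\varprojlim_i P_i)(U)\cong\varprojlim_i(\#P_i)(U)$ for every finite diagram $(P_i)$, which yields the claim.
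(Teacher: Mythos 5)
This is essentially the paper's own argument: the proposition is stated there as a classical fact with a pointer to Theorem~III.5.1 of~\cite{MR1300636}, and the paper's (suppressed) proof is exactly the iterated plus construction, with preservation of finite limits obtained because $\#$ is built as a filtered colimit of finite limits. Your explicit caveat that the final interchange requires filtered colimits to commute with finite limits in $\mathcal{D}$ --- automatic in $\Sets$, which is the only case the paper actually uses to construct $\tilde{\#}$ --- is well taken and consistent with how the paper treats the point.
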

\begin{comment} (NO LONGER USED; COFILTEREDNESS IS UNNECESSARY)
\begin{proof} Let $\mathcal{C}^{\op}\stackrel{\mathcal{F}}{\longrightarrow} \mathcal{D}$ be a 
presheaf.
Then $\mathcal{F}$ induces a functor 
$J_U^{\op}\stackrel{\mathcal{F}_U}{\longrightarrow}
\mathcal{D}$ by 
\[ \mathcal{F}_U(\{ U_a\longrightarrow U\} ) = \eq\left\{\vcenter{\xymatrix{ {\prod}_a \mathcal{F}(U_a)
\ar@<1ex>[r]\ar@<-1ex>[r] & {\prod}_{a,b} \mathcal{F}(U_a\times_U U_b)}}\right\} .\]
Let \[ \mathcal{F}^+(U) = \colim_{\{ U_i\rightarrow U\}\in\ob J_U} \mathcal{F}_U(\{ U_i\rightarrow U\}).\] 
Convergence of the colimit is guaranteed only because $\mathcal{C}$ is assumed small, so $J_u$ is small.
Functoriality in $U$, and that 
$\mathcal{F}^+$ is a separated presheaf, and that
$\mathcal{F}^+$ is a sheaf if $\mathcal{F}$ is a separated presheaf, are proved as in Theorem III.5.1 of~\cite{MR1300636}. Our sheafification $\#\mathcal{F}$ is then $\mathcal{F}^{++}$. 

The claim that sheafification commutes with finite limits under the stated additional assumptions follows from the construction of $\#\mathcal{F}$, above: it is constructed as a filtered colimit of finite limits.
\end{proof}
\end{comment}

\begin{lemma}\label{internal cat sheafification is a left adjoint}
Suppose $\mathcal{C}$ is small and $(\mathcal{C},\tau)$ is a cofiltered site.
The forgetful functor \[ \tilde{\forget}: \Cat\Sh(\mathcal{C},\tau) \rightarrow \Cat\left( \Sets^{\mathcal{C}^{\op}}\right),\]
from internal categories in $\Sh(\mathcal{C},\tau)$ to internal categories in $\Sets^{\mathcal{C}^{\op}}$,
has a left adjoint $\tilde{\#}: \Cat\left( \Sets^{\mathcal{C}^{\op}}\right) \rightarrow \Cat\Sh(\mathcal{C},\tau)$
which we also call sheafification.
This sheafification $\tilde{\#}$ commutes with forgetting to the underlying object and morphism (pre)sheaves, in the sense that
the underlying object sheaf of $\tilde{\#}(\mathcal{X})$ is naturally isomorphic to the sheafification of
the underlying object presheaf of $\mathcal{X}$, and the
underlying morphism sheaf of $\tilde{\#}(\mathcal{X})$ is naturally isomorphic to the sheafification of the underlying morphism presheaf of $\mathcal{X}$.
\end{lemma}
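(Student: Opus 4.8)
The plan is to build $\tilde{\#}$ by applying the set-valued sheafification $\#$ of Proposition~\ref{sheafification} (in the special case $\mathcal{D} = \Sets$, so that $\#\colon \Sets^{\mathcal{C}^{\op}} \to \Sh(\mathcal{C},\tau)$) levelwise to internal categories, and to leverage the one essential feature of $\#$ recorded in that proposition: it commutes with finite limits. The entire argument rests on this single fact.

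First I would fix an internal category $\mathcal{X} = (\mathcal{O}, \mathcal{M})$ in $\Sets^{\mathcal{C}^{\op}}$, with its structure maps $\eta_L, \eta_R \colon \mathcal{M} \to \mathcal{O}$, $\epsilon \colon \mathcal{O} \to \mathcal{M}$, and $\Delta \colon \mathcal{M}\times_{\mathcal{O}}\mathcal{M} \to \mathcal{M}$, and define $\tilde{\#}(\mathcal{X})$ to have object sheaf $\#\mathcal{O}$, morphism sheaf $\#\mathcal{M}$, and structure maps obtained by applying $\#$ to those of $\mathcal{X}$. The only point requiring care is the composition map: its domain at the presheaf level is the pullback $\mathcal{M}\times_{\mathcal{O}}\mathcal{M}$, whereas the composition map of an internal category in $\Sh(\mathcal{C},\tau)$ must have domain $\#\mathcal{M}\times_{\#\mathcal{O}}\#\mathcal{M}$. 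These two sheaves are identified by the canonical isomorphism $\#(\mathcal{M}\times_{\mathcal{O}}\mathcal{M}) \cong \#\mathcal{M}\times_{\#\mathcal{O}}\#\mathcal{M}$ coming from preservation of pullbacks, so I take $\#\Delta$ under this identification. The associativity and unit axioms are equalities between maps whose sources are further finite limits (the object of composable triples, an iterated pullback over $\mathcal{O}$); since $\#$ preserves these finite limits and is a functor, it carries the commuting axiom diagrams for $\mathcal{X}$ to the corresponding diagrams for $\tilde{\#}(\mathcal{X})$. Hence $\tilde{\#}(\mathcal{X})$ is a genuine internal category in $\Sh(\mathcal{C},\tau)$, and functoriality of $\tilde{\#}$ is immediate from that of $\#$.

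Next I would establish the adjunction $\tilde{\#} \dashv \tilde{\forget}$. The unit $\mathcal{X} \to \tilde{\forget}\,\tilde{\#}(\mathcal{X})$ is defined levelwise by the unit of the underlying adjunction $\# \dashv \forget$ on the object- and morphism-presheaves; naturality of that unit, together with the pullback identification above, shows the resulting pair commutes with all structure maps and so is a map of internal categories. For the universal property, I would take any map $\mathcal{X} \to \tilde{\forget}(\mathcal{Y})$ of internal categories in $\Sets^{\mathcal{C}^{\op}}$ and transpose its object- and morphism-components across $\# \dashv \forget$ to get sheaf maps $\#\mathcal{O}_{\mathcal{X}} \to \mathcal{O}_{\mathcal{Y}}$ and $\#\mathcal{M}_{\mathcal{X}} \to \mathcal{M}_{\mathcal{Y}}$. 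Existence and uniqueness of the transpose on underlying (pre)sheaves is exactly the levelwise adjunction; what remains is to verify that the transposed pair again respects the structure maps, which is once more a diagram chase from naturality of the adjunction bijection and preservation of the pullback $\mathcal{M}\times_{\mathcal{O}}\mathcal{M}$. Finally, the asserted compatibility with forgetting to underlying (pre)sheaves is built into the construction: by definition the object sheaf of $\tilde{\#}(\mathcal{X})$ is $\#\mathcal{O}$ and its morphism sheaf is $\#\mathcal{M}$, which are the sheafifications of the object and morphism presheaves of $\mathcal{X}$.

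The main obstacle, and really the only non-formal ingredient, is the compatibility of $\#$ with the pullback $\mathcal{M}\times_{\mathcal{O}}\mathcal{M}$; everything else is formal manipulation of adjunctions and of the diagrams defining internal categories. If $\#$ failed to preserve this pullback, the composition map of $\tilde{\#}(\mathcal{X})$ would not even have a well-defined domain, and there would be no internal-category structure to speak of. It is precisely the finite-limit-preservation clause of Proposition~\ref{sheafification} that rescues the construction, and I note that only the smallness of $\mathcal{C}$ and this exactness of $\#$ are actually invoked.
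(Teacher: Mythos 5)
Your proposal is correct and follows essentially the same route as the paper's proof: both apply the set-valued sheafification $\#$ levelwise, invoke its preservation of finite limits (Proposition~\ref{sheafification}) to identify $\#(\mathcal{M}\times_{\mathcal{O}}\mathcal{M})$ with $\#\mathcal{M}\times_{\#\mathcal{O}}\#\mathcal{M}$ so that the internal-category structure carries over, and then obtain the adjunction from the universal property of $\#$ on the underlying (pre)sheaves. The paper phrases the last step as exhibiting $\tilde{\#}$ as a reflection onto $\Cat\Sh(\mathcal{C},\tau)$ rather than via an explicit unit-and-transpose argument, but this is only a cosmetic difference; your observation that only smallness of $\mathcal{C}$ and left-exactness of $\#$ are actually used also matches the paper's own (commented) remark that the cofilteredness hypothesis is not needed.
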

\begin{proof}
From Proposition~\ref{sheafification} and the stated assumptions 
one knows that $\#$ commutes with finite limits,
so if 
\[ \xymatrix{ \mathcal{O} \ar[r] & \mathcal{M}\ar@<1ex>[l] \ar@<-1ex>[l] &
 \mathcal{M}\times_{\mathcal{O}}\mathcal{M} \ar[l] }\]
is an internal category in $\Sets^{\mathcal{C}^{\op}}$, applying
$\#$ yields 
\[ \xymatrix{ \#\mathcal{O} \ar[r] & \#\mathcal{M}\ar@<1ex>[l] \ar@<-1ex>[l] &
 \#\mathcal{M}\times_{\#\mathcal{O}}\#\mathcal{M}\ar[l], }\]
an internal category in $\Sh(\mathcal{C},\tau)$.

Now suppose $({\mathcal{O}},{\mathcal{M}})$ is an internal category in 
$\Sh(\mathcal{C},\tau)$ and $f: (\mathcal{O}^{\prime},\mathcal{M}^{\prime})\rightarrow ({\mathcal{O}},{\mathcal{M}})$
a morphism of internal categories. The universal property of $\#$
and its naturality then produces a unique factorization of $f$
through the canonical map 
$(\mathcal{O}^{\prime},\mathcal{M}^{\prime})\rightarrow (\#\mathcal{O}^{\prime},\#\mathcal{M}^{\prime})$
in the category of internal categories in $\Sets^{\mathcal{C}^{\op}}$.
Hence $(\mathcal{O}^{\prime},\mathcal{M}^{\prime})\mapsto (\#\mathcal{O}^{\prime},\#\mathcal{M}^{\prime})$
is a reflection functor 
$\Cat(\Sets^{\mathcal{C}^{\op}}) \rightarrow \Cat\Sh(\mathcal{C},\tau)$,
i.e., it is left adjoint to the inclusion 
$\Cat\Sh(\mathcal{C},\tau)\rightarrow \Cat(\Sets^{\mathcal{C}^{\op}})$.
\end{proof}

\begin{lemma}\label{constantification is a left adjoint}
Suppose $\mathcal{C}$ has a terminal object.
Let $\tilde{c}$ be the 
functor \[ \tilde{c}: \SmCat \rightarrow \Cat\left( \Sets^{\mathcal{C}^{\op}}\right),\]
%from small categories to internal categories in $\Sets^{\mathcal{C}^{\op}}$,
which sends a small category $\mathcal{A}$ to
the constant presheaf taking the value $\mathcal{A}$.
Then $\tilde{c}$ is a left adjoint.
\end{lemma}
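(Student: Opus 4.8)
The plan is to produce an explicit right adjoint to $\tilde{c}$ by first identifying a right adjoint to the underlying functor $c\colon \Sets \to \Sets^{\mathcal{C}^{\op}}$ and then lifting that adjunction levelwise through the construction of Convention~\ref{convention on tilde}.

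First I would work at the level of $\Sets$. Let $*$ denote the terminal object of $\mathcal{C}$ and let $\mathrm{ev}_*\colon \Sets^{\mathcal{C}^{\op}} \to \Sets$ be the functor $F \mapsto F(*)$ of evaluation at $*$. I claim $c \dashv \mathrm{ev}_*$. Indeed, a natural transformation $\alpha\colon cS \to F$ is a compatible family of maps $\alpha_U\colon S \to F(U)$, and since every object $U$ of $\mathcal{C}$ admits a unique map $U \to *$, naturality forces $\alpha_U = F(U \to *)\circ \alpha_*$; conversely, any map $\alpha_*\colon S \to F(*)$ extends to such a family, and one checks naturality using the uniqueness of maps to $*$. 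This yields a bijection $\mathrm{Hom}_{\Sets^{\mathcal{C}^{\op}}}(cS, F) \cong \mathrm{Hom}_{\Sets}(S, F(*))$, natural in $S$ and $F$. This is exactly where the hypothesis that $\mathcal{C}$ has a terminal object is used.

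Next I would lift this to internal categories. Both $c$ and $\mathrm{ev}_*$ preserve finite limits: $c$ because the limit of a diagram of constant presheaves is the constant presheaf on the limit, and $\mathrm{ev}_*$ because limits in $\Sets^{\mathcal{C}^{\op}}$ are computed pointwise (equivalently, because it is a right adjoint). Hence by Convention~\ref{convention on tilde} they induce functors $\tilde{c}\colon \SmCat \to \Cat(\Sets^{\mathcal{C}^{\op}})$ and $\widetilde{\mathrm{ev}_*}\colon \Cat(\Sets^{\mathcal{C}^{\op}}) \to \SmCat$. I would then show that the adjunction $c \dashv \mathrm{ev}_*$ lifts to $\tilde{c} \dashv \widetilde{\mathrm{ev}_*}$ by applying the unit and counit of $c \dashv \mathrm{ev}_*$ levelwise. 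Concretely, for an internal category $\mathcal{X} = (\mathcal{O}, \mathcal{M})$ one must verify that the object- and morphism-components of each assemble into a genuine morphism of internal categories, i.e., that they commute with the structure maps $\eta_L, \eta_R, \epsilon, \Delta$; for $\eta_L, \eta_R, \epsilon$ this is immediate from naturality of the relevant transformation with respect to these maps, and the triangle identities then hold because they already hold levelwise for $c \dashv \mathrm{ev}_*$.

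The main obstacle is the compatibility of the lifted counit with the composition map $\Delta$, since this involves the pullback $\mathcal{M}\times_{\mathcal{O}}\mathcal{M}$ rather than a single structure object. Here I would use that $c$ and $\mathrm{ev}_*$ preserve pullbacks to identify the composable-pairs object of the relevant internal category with the image of $\mathcal{M}\times_{\mathcal{O}}\mathcal{M}$, and then observe that, under this identification, the map induced on composable pairs by the morphism-component agrees with the component of the transformation at $\mathcal{M}\times_{\mathcal{O}}\mathcal{M}$; the required square then commutes by naturality applied to $\Delta$. An even slicker route, which I would mention as an alternative, is to invoke the general fact that any adjunction between finitely complete categories whose left adjoint preserves finite limits lifts, via $\Cat(-)$, to an adjunction between the associated categories of internal categories; the hands-on verification above is precisely the proof of that fact in the case at hand.
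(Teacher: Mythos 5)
Your proof is correct and takes essentially the same route as the paper: both identify the right adjoint as evaluation at the terminal object of $\mathcal{C}$, with the adjunction driven by the fact that every object admits a unique map to the terminal object. The paper phrases this directly at the level of internal categories (as a special case $U=1$ of an evaluation-at-$U$ adjunction) and declares the verification elementary, whereas you spell out the same verification by establishing the adjunction in $\Sets$ and lifting it levelwise through $\Cat(-)$ using preservation of pullbacks.
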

\begin{proof}
For each object $U$ of $\mathcal{C}$, let $\tilde{\Gamma}_U: \Cat\left(\Sets^{\mathcal{C}^{\op}}\right)\rightarrow\SmCat$ be the 
functor ``evaluation at $U$,'' i.e., $\tilde{\Gamma}_U\left( \mathcal{M},\mathcal{O}\right)$ is the small category
with objects $\mathcal{M}(U)$ and morphisms $\mathcal{O}(U)$.

The functor $\tilde{\Gamma}_U$ has a left adjoint $\tilde{c}_U: \SmCat\rightarrow\Cat\left(\Sets^{\mathcal{C}^{\op}}\right)$
given by sending a small category $\mathcal{A}$ to the internal category $\mathcal{X}$ in $\Sets^{\mathcal{C}^{\op}}$
defined as follows: for all $V$ in $\mathcal{C}$, we let $\mathcal{X}(V) = \mathcal{A}$ if there exists a morphism
$V\rightarrow U$ in $\mathcal{C}$, and we let $\mathcal{X}(V) = \emptyset$ if there does not exist a morphism
$V\rightarrow U$ in $\mathcal{C}$. Verifying that $\tilde{c}_U$ is left adjoint to $\tilde{\Gamma}_U$ is elementary.

The functor $\tilde{c}$ in the statement of the lemma is simply $\tilde{c}_1$, with $1$ the terminal object in $\mathcal{C}$.
\end{proof}

\begin{definition}\label{def of c} Define $c: \Sets \to \Sets^{\mathcal{C}^{\op}}$ to be the functor taking a set $X$ to the constant presheaf with value $X$. 
\end{definition} 
It is clear that $\tilde{c}$ is given by applying $c$ to the diagrams.
\[ \xymatrix{ \mathcal{O} \ar[r] & \mathcal{M}\ar@<1ex>[l] \ar@<-1ex>[l] &
 \mathcal{M}\times_{\mathcal{O}}\mathcal{M} \ar[l] }\]
Moreover, because limits and colimits in presheaves are computed levelwise, both are preserved by $c$.

\begin{theorem}\label{spheres to spheres}
Suppose $\mathcal{C}$ is small and has a terminal object. Then the following statements hold:
\begin{itemize}
\item The composite $\tilde{\#}\circ\tilde{c}: \SmCat \rightarrow \Cat\Sh(\mathcal{C},\tau)$ is left adjoint to global sections $\tilde{\Gamma}$.
\item The composite $\tilde{\#}\circ\tilde{c}$ preserves finite limits.
\item The composite $\tilde{\#}\circ\tilde{c}$ sends cofibrations to cofibrations.
\item The composite $\tilde{\#}\circ\tilde{c}$ sends cofibrant objects to cofibrant objects.
\item The composite $\tilde{\#}\circ\tilde{c}$ sends weak equivalences to weak equivalences.
\item The composite $\tilde{\#}\circ\tilde{c}$ sends $n$-spheres to $n$-spheres. (More precisely: if $X$ is an $n$-sphere in $\SmCat$, then $(\tilde{\#}\circ\tilde{c})(X)$ is an $n$-sphere in $\Cat\Sh(\mathcal{C},\tau)$.) 
\end{itemize}
\end{theorem}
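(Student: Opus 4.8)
The plan is to reduce all six statements to properties of the single underlying functor $\#\circ c\colon \Sets \to \Sh(\mathcal{C},\tau)$, and then bootstrap the sixth (geometric) statement from the first five by induction. The key preliminary observations are that $\tilde{\#}\circ\tilde{c}$ is obtained by applying $\#\circ c$ levelwise to the defining diagram of an internal category (Convention~\ref{convention on tilde}, Definition~\ref{def of c}), that $\#\circ c$ preserves finite limits (Proposition~\ref{sheafification} gives this for $\#$, and the remark following Definition~\ref{def of c} gives it for $c$), and that $\#\circ c$ preserves all colimits (for $\#$ as a left adjoint by Proposition~\ref{sheafification}, and for $c$ by the remark following Definition~\ref{def of c}).

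First I would dispatch the four ``formal'' statements. For the adjunction, $\tilde{c}$ is a left adjoint by Lemma~\ref{constantification is a left adjoint} and $\tilde{\#}$ is a left adjoint by Lemma~\ref{internal cat sheafification is a left adjoint}, so their composite is a left adjoint; its right adjoint is the composite of the two right adjoints, namely ``evaluate at the terminal object $1$ of $\mathcal{C}$, after forgetting the sheaf condition,'' which is exactly the global sections functor $\tilde{\Gamma}$. For finite-limit preservation, I use that finite limits in $\Cat(\mathcal{E})$ are computed levelwise on the object-of-objects and object-of-morphisms (internal categories being models of a finite-limit sketch), so a finite-limit-preserving functor on $\mathcal{E}$ induces a finite-limit-preserving functor on $\Cat(\mathcal{E})$; applied to $\#\circ c$ this gives the claim. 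Preservation of cofibrations is then immediate: cofibrations in the Joyal--Tierney structure are exactly the maps whose object-of-objects component is a monomorphism (Theorem~\ref{j-t thm}), and a finite-limit-preserving functor preserves monomorphisms. Finally, a cofibrant object is one whose map from the initial object is a cofibration; since $\tilde{\#}\circ\tilde{c}$ is a left adjoint it preserves the initial object, and we have just shown it preserves cofibrations, so it preserves cofibrant objects.

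The main obstacle is preservation of weak equivalences, i.e.\ of internal categorical equivalences (Definition~\ref{model structure}). Fullness and faithfulness is encoded by a pullback square, which is preserved since $\tilde{\#}\circ\tilde{c}$ preserves finite limits. The delicate part is essential surjectivity, which is the assertion that a certain composite $\eta_R\circ\iota\circ p$ is a regular epimorphism. Here I would lean on Remark~\ref{iota-def}: the object $\Iso(\mathcal{O},\mathcal{M})$ and the map $\iota$ are built entirely out of finite limits ($\Iso$ is realized as the object $Q(\mathcal{O},\mathcal{M})$ formed by iterated pullbacks, and $\iota$ as a composite of the resulting projections), so $\#\circ c$ carries this construction, and the pullback $\mathcal{O}_0\times_{\mathcal{O}_1}\Iso(\mathcal{O}_1,\mathcal{M}_1)$, to the corresponding construction for the image map. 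It therefore suffices to know that $\#\circ c$ carries the regular epimorphism witnessing essential surjectivity in $\Sets$ (a surjection) to a regular epimorphism in $\Sh(\mathcal{C},\tau)$; this follows because $\#\circ c$ preserves colimits, hence coequalizers, hence regular epimorphisms. Assembling these, the composite computed for $(\tilde{\#}\circ\tilde{c})(f)$ is identified with $\#\circ c$ applied to the original composite, and is therefore a regular epimorphism.

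It remains to treat spheres by induction on $n$. The clean way to package the previous work is to observe that $\tilde{\#}\circ\tilde{c}$ is a left Quillen functor: it is a left adjoint that preserves cofibrations and weak equivalences, hence also acyclic cofibrations. A left Quillen functor preserves homotopy pushouts of diagrams of cofibrant objects, and in the Joyal--Tierney structure every object is cofibrant (in particular left-properness holds, Observation~\ref{j-t is left-proper}). For the base case, $S^0 = 1\coprod 1$, and $\tilde{\#}\circ\tilde{c}$ preserves the terminal object (finite limits) and coproducts (left adjoint), so it sends $S^0$ to $1\coprod 1$, the $0$-sphere in $\Cat\Sh(\mathcal{C},\tau)$. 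For the inductive step, an $n$-sphere $X$ is by Definition~\ref{def of spheres} a homotopy pushout of $1\leftarrow Y\to 1$ with $Y$ an $(n-1)$-sphere; applying the left Quillen functor $\tilde{\#}\circ\tilde{c}$ yields a homotopy pushout of $1\leftarrow (\tilde{\#}\circ\tilde{c})(Y)\to 1$, and by the inductive hypothesis $(\tilde{\#}\circ\tilde{c})(Y)$ is an $(n-1)$-sphere, so $(\tilde{\#}\circ\tilde{c})(X)$ is an $n$-sphere. If one prefers to avoid invoking left-Quillenness abstractly, one can argue directly: replacing the two legs $Y\to 1$ by cofibrations and forming the ordinary pushout computes the homotopy pushout; $\tilde{\#}\circ\tilde{c}$ preserves this pushout, sends the cofibration legs to cofibrations and the two maps to $1$ to weak equivalences, and one concludes using left-properness of $\Cat\Sh(\mathcal{C},\tau)$ (Observation~\ref{j-t is left-proper}, and Proposition~\ref{one-sided homotopy pushouts}) that the resulting pushout again computes the homotopy pushout of $1\leftarrow (\tilde{\#}\circ\tilde{c})(Y)\to 1$.
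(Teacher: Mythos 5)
Your proposal is correct and follows essentially the same route as the paper's proof: reduce everything to the levelwise functor $\#\circ c$ preserving finite limits and all colimits, handle essential surjectivity via the finite-limit construction of $\iota$ in Remark~\ref{iota-def} together with preservation of regular epimorphisms, and induct on $n$ for spheres using preservation of homotopy pushouts. The only (harmless) deviations are cosmetic: you explicitly identify the right adjoint as global sections and you justify preservation of the initial object via colimit-preservation rather than the paper's appeal to finite limits, which is if anything the cleaner argument.
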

\begin{proof}
\begin{itemize}

\item In Lemma~\ref{constantification is a left adjoint} and Lemma~\ref{internal cat sheafification is a left adjoint} we prove that
$\tilde{c}$ and $\tilde{\#}$ are left adjoints, so the same is true of their composite.  %The corresponding right adjoint is $\tilde{\Gamma}$ composed with the insertion $i$.
\item The functor {$\tilde{c}$ preserves products simply because products in functor categories are computed pointwise. The functor $\tilde{\#}$ preserves finite limits by its definition: an internal category
\[ \xymatrix{ \mathcal{O} \ar[r] & \mathcal{M}\ar@<1ex>[l] \ar@<-1ex>[l] &
 \mathcal{M}\times_{\mathcal{O}}\mathcal{M} \ar[l] }\]
sheafifies as
\[ \xymatrix{ \#\mathcal{O} \ar[r] & \#\mathcal{M}\ar@<1ex>[l] \ar@<-1ex>[l] &
 \#\mathcal{M}\times_{\#\mathcal{O}}\#\mathcal{M}\ar[l], }\],
 and since $\#$ preserves finite limits, $\tilde{\#}$ does as well. }
\item If $(f_{\mathcal{O}},f_{\mathcal{M}}): (\mathcal{O},\mathcal{M})\rightarrow ({\mathcal{O}^{\prime}},{\mathcal{M}^{\prime}})$ is a cofibration in $\SmCat$, % = \Cat\Sh(\triv)$
then $f_{\mathcal{O}}$ is a monomorphism in $\Sets$, so $(\tilde{c}(f))_{\mathcal{O}}$ is a monomorphism in $\Sets^{\mathcal{C}^{\op}}$ by construction,
and $\left(\left(\tilde{\#}\circ \tilde{c}\right)(f)\right)_{\mathcal{O}} \cong \#(\tilde{c}(f)_{\mathcal{O}})$.
The functor $\#$ preserves finite limits by Proposition~\ref{sheafification}, so in particular it preserves pullbacks, hence preserves the kernel pair of any morphism. It is a standard exercise (in fact, see exercise III.4.4 in~\cite{MR1712872} for the dual) that a map $g: X \rightarrow Y$ is monic if and only if the pullback of the kernel pair of $g$ is $X$; consequently $\#$ preserves monomorphisms.
So $\left(\left(\tilde{\#}\circ \tilde{c}\right)(f)\right)_{\mathcal{O}}$ is a monomorphism.
So $\left(\tilde{\#}\circ \tilde{c}\right)(f)$ is a cofibration.
\item Since $\tilde{\#}\circ \tilde{c}$ preserves finite limits, it sends the intial object to an initial object, so $\tilde{\#}\circ \tilde{c}$ preserving cofibrations immediately implies that it also sends cofibrant objects to cofibrant objects.
\item {Let $F$ be a weak equivalence, so that it is full and faithful as well as essentially surjective.  Because $\#$ and $c$ both preserve finite limits, pullbacks in particular are preserved and $\left(\tilde{\#}\circ\tilde{c}\right)(F)$ remains full and faithful.  Since $\#$ and $c$ each preserve colimits, they each send regular epimorphisms to regular epimorphisms, so essential surjectivity of $\eta_R\circ\iota\circ p$ will be preserved as long as the map $\eta_L\circ\iota$ is preserved by $\tilde{\#}$ and by $\tilde{c}$.  This follows from the construction of $\iota$ as a finite limit in \ref{iota-def}, and the fact that $\tilde{\#}\circ \tilde{c}$ preserves finite limits.}
\item 
Since $\tilde{\#}\circ\tilde{c}$ preserves products and colimits, in particular it preserves the coproduct of two copies of the trivial product, i.e.,
$1\coprod 1$. % NO LONGER NECESSARY SINCE ZERO-SPHERES ARE NOW RIGID: Since $\tilde{\#}\circ \tilde{c}$ sends cofibrant objects to cofibrant objects and sends weak equivalences to weak equivalences, it sends any cofibrant object weakly equivalent to $1\coprod 1$ to a cofibrant object weakly equivalent to $1\coprod 1$, i.e., $\tilde{\#}\circ \tilde{c}$ sends zero-spheres to zero-spheres, concluding the initial step in the induction. 
This is the initial step in an induction.

Since $\tilde{\#}\circ\tilde{c}$ preserves colimits, cofibrations, cofibrant objects, and weak equivalences, it preserves homotopy colimits (up to weak equivalence), and in particular,
it preserves homotopy pushouts (up to weak equivalence). 
This provides the inductive step: if $\tilde{\#}\circ \tilde{c}$ sends $n$-spheres to $n$-spheres, then it sends $n+1$-spheres to $n+1$-spheres.
So, for all nonnegative integers $n$, we have that $\tilde{\#}\circ \tilde{c}$ sends $n$-spheres to $n$-spheres.
\end{itemize}
\end{proof}

\begin{corollary}
 The functor $\tilde{\Gamma}$ sends weak equivalences between fibrant objects to weak equivalences.
\end{corollary}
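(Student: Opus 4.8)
The plan is to recognize the adjunction furnished by Theorem~\ref{spheres to spheres} as a Quillen adjunction and then to invoke the standard consequence of Ken Brown's lemma that a right Quillen functor preserves weak equivalences between fibrant objects. The statement to be proved is exactly that consequence applied to $\tilde{\Gamma}$, so almost all of the work has already been done.

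First I would observe that Theorem~\ref{spheres to spheres} supplies everything needed to see that the adjunction $\tilde{\#}\circ\tilde{c} \dashv \tilde{\Gamma}$ is a Quillen adjunction. The left adjoint $\tilde{\#}\circ\tilde{c}$ preserves cofibrations, and it preserves weak equivalences; since an acyclic cofibration is precisely a map that is simultaneously a cofibration and a weak equivalence, it follows that $\tilde{\#}\circ\tilde{c}$ preserves acyclic cofibrations as well. A left adjoint that preserves cofibrations and acyclic cofibrations is, by definition, a left Quillen functor, and therefore its right adjoint $\tilde{\Gamma}$ is a right Quillen functor. In particular $\tilde{\Gamma}$ preserves fibrations and acyclic fibrations.

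Next I would apply Ken Brown's lemma in its dual form: a functor sending acyclic fibrations between fibrant objects to weak equivalences carries every weak equivalence between fibrant objects to a weak equivalence. Since $\tilde{\Gamma}$ is right Quillen it sends acyclic fibrations to acyclic fibrations, hence in particular to weak equivalences, so the hypothesis of the (dual) lemma is satisfied. The conclusion produced by the lemma is then precisely the assertion of the corollary: $\tilde{\Gamma}$ sends weak equivalences between fibrant objects to weak equivalences.

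The argument is essentially formal, so I do not expect a serious obstacle; the only point requiring a moment's care is the passage from ``preserves cofibrations'' and ``preserves weak equivalences'' to ``preserves acyclic cofibrations,'' which is immediate once one recalls the definition of an acyclic cofibration. Everything else is a direct appeal to Theorem~\ref{spheres to spheres} together with the standard model-categorical machinery of Quillen adjunctions and Ken Brown's lemma.
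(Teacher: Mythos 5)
Your proposal is correct and follows essentially the same route as the paper: deduce from Theorem~\ref{spheres to spheres} that $\tilde{\#}\circ\tilde{c}$ is a left Quillen functor (so $\tilde{\Gamma}$ preserves fibrations and acyclic fibrations) and then apply the dual form of Ken Brown's lemma. The only difference is that you spell out the passage from ``preserves cofibrations and weak equivalences'' to ``preserves acyclic cofibrations,'' which the paper leaves implicit.
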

\begin{proof}
 Since $\tilde{\Gamma}$ is right adjoint to the left Quillen functor $\tilde{\#}\circ \tilde{c}$, it preserves fibrations and acyclic fibrations. So by Ken Brown's lemma (see e.g. Lemma~1.1.12 of~\cite{MR1650134}), $\tilde{\Gamma}$ preserves weak equivalences between fibrant objects.
\end{proof}

\section{Exotic attaching maps}
Since we have all spheres in the image of $\tilde{\#}\circ\tilde{c}$, it is natural to ask whether all CW-complexes are also in the image. 
Under some assumptions on the underlying site, this is indeed the case, up to weak equivalence; see Theorem~\ref{connected space}. 
 The following example shows, however, that some assumptions on the underlying site are necessary: there exist sites for which not all attaching maps between spheres can be given by application of $\tilde{\#}\circ\tilde{c}$.  We think of maps outside the image of the functor $\tilde{\#}\circ\tilde{c}$ as ``exotic attaching maps,'' since one doesn't expect the homotopy cofiber of such maps to be in the essential image of $\tilde{\#}\circ \tilde{c}$.

\begin{example}
 By Theorem~\ref{spheres to spheres}, $S^0$ in $\Cat\Sh(\mathcal{C},\tau)$ is $\left(\tilde{\#}\circ\tilde{c}\right)(1\amalg 1)$, that is, the sheafification of the constant presheaf which evaluates to the two-element discrete category. 
%THIS SAID ``This is itself a discrete category (because sheafification is done levelwise) with structure sheaf the two-element constant sheaf'' HERE, BUT THIS SENTENCE DOESN'T MAKE SENSE TO ME /// ``levelwise'' was probably the wrong term, but what's meant is that an internal category (O,M) is discrete when O = M, and the fact that sheafifying the internal category amounts to applying the same sheafification process to both these objects means that sheafifying a discrete category gives another discrete category.  I'm not sure if this is worth putting back in. -Christian
Since the sheafification functor $\tilde{\#}: \Cat(\Sets^{\mathcal{C}}) \rightarrow \Cat\Sh(\mathcal{C},\tau)$ is given by applying the sheafification functor $\#: \Sets^{\mathcal{C}} \rightarrow \Sh(\mathcal{C},\tau)$ levelwise, the internal category $\left(\tilde{\#}\circ\tilde{c}\right)(1\amalg 1)$ is discrete (i.e., its morphisms $\eta_L,\eta_R,$ and $\epsilon$ are isomorphisms).
Regarded as a sheaf of small categories, $\left(\tilde{\#}\circ\tilde{c}\right)(1\amalg 1)$
is simply the constant sheaf taking value the two-element set $\mathbf{2}$.
In the case that our site is a space $X$, the standard identification of constant sheaves (i.e., sheafifications of constant presheaves) on topological spaces, as in exercise 2.7 of \cite{MR1300636}, then gives us that 
$\left(\tilde{\#}\circ\tilde{c}\right)(1\amalg 1)$ is the sheaf of locally constant functions from $X$ to $\mathbf{2}$.  Let $X$ be the discrete two-element space, with elements $u$ and $v$,
so that 
\[ \left(\left(\tilde{\#}\circ\tilde{c}\right)(1\amalg 1)\right)\left( \{ u\}\right)  = 
\left(\left(\tilde{\#}\circ\tilde{c}\right)(1\amalg 1)\right)\left( \{ v\}\right)  = \mathbf{2}.\]
 %The two-element constant sheaf on $X$ is then the sheaf of locally constant functions from $X$ to $\mathbf{2}$, ie the sheaf of all functions from $X$ to $\mathbf{2}$ (as $X$ is discrete).  
Fix an element $0\in \mathbf{2}$.
To construct a map 
\[ \xi: \left(\tilde{\#}\circ\tilde{c}\right)(1\amalg 1)\rightarrow \left(\tilde{\#}\circ\tilde{c}\right)(1\amalg 1)\]
which is not in the image of $\tilde{\#}\circ\tilde{c}$, we define $\xi(\{u\}): \mathbf{2} \rightarrow\mathbf{2}$ to be the identity map on $\mathbf{2}$, and we define $\xi(\{v\}): \mathbf{2} \rightarrow\mathbf{2}$ to be the map sending both elements of $\mathbf{2}$ to $0\in \mathbf{2}$.
As $\{u\}$ and $\{v\}$ have empty intersection, this gives a unique map on their union.  Since $\{u\}$ and $\{v\}$ form an open cover of $X$, the definitions of $\xi(\{u\})$ and $\xi(\{v\})$ completely determine how $\xi$ must act on the global sections.  Furthermore, the induced stalk map $\xi_{\{u\}}$ is not isomorphic to the induced stalk map $\xi_{\{v\}}$, so $\xi$ is an example of an ``exotic attaching map.'' 
\end{example}

In the case where our site is a connected space, however, $\tilde{\#}\circ\tilde{c}$ is a full functor. This follows from the following proposition:
\begin{prop}\label{unit isomorphism}
Let $\mathcal{C}$ be the site of open subsets of a connected topological space. Then the unit natural transformation 
\[ \tilde{\eta} : \id_{\Cat(\Sets^{\mathcal{C}})} \rightarrow \tilde{\Gamma}\circ\tilde{\#}\circ \tilde{c} \]
of the adjunction $\tilde{\#}\circ\tilde{c}\dashv\tilde{\Gamma}$ is a natural isomorphism.

Consequently, $\tilde{\#}\circ\tilde{c}$ is a full and faithful functor.
\end{prop}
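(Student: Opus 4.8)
The plan is to reduce the claim about internal categories to a claim about sheaves of sets, and then to invoke connectedness. Recall from Convention~\ref{convention on tilde} and Definition~\ref{def of c} that $\tilde{c}$, $\tilde{\#}$, and $\tilde{\Gamma}$ are obtained by applying the set-level functors $c$, $\#$, and $\Gamma$ levelwise to the defining diagram $\mathcal{O}, \mathcal{M}, \mathcal{M}\times_{\mathcal{O}}\mathcal{M}$ of an internal category. The site of open subsets of $X$ is small and cofiltered (the poset of opens is a small meet-semilattice, so parallel maps coincide and any two opens have their intersection below both), so Lemma~\ref{internal cat sheafification is a left adjoint} applies and $\tilde{\#}$ commutes with passage to the underlying object and morphism presheaves; the same is immediate for $\tilde{c}$ and $\tilde{\Gamma}$. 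Hence, for a small category with object set $\mathcal{O}$ and morphism set $\mathcal{M}$, the corresponding component of $\tilde{\eta}$ is the morphism of internal categories in $\Sets$ whose underlying map on objects is the set-level unit $\eta_{\mathcal{O}}\colon \mathcal{O}\to \Gamma(\#(c(\mathcal{O})))$ and whose underlying map on morphisms is $\eta_{\mathcal{M}}\colon \mathcal{M}\to \Gamma(\#(c(\mathcal{M})))$. Since a morphism of small categories that is a bijection on objects and on morphisms is an isomorphism, it suffices to show that the set-level unit $\eta_S\colon S\to \Gamma(\#(c(S)))$ is a bijection for every set $S$.

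Next I would make $\#(c(S))$ explicit. As recalled in the Example above (the standard identification of constant sheaves as sheaves of locally constant functions, exercise~2.7 of~\cite{MR1300636}), $\#(c(S))$ is the sheaf whose sections over an open $U$ are the locally constant functions $U\to S$, and under this identification the unit $\eta_S$ sends an element $s\in S$ to the constant function with value $s$. Thus $\Gamma(\#(c(S)))$ is the set of locally constant functions $X\to S$, and $\eta_S$ is simply the inclusion of the constant functions into the locally constant functions.

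The hypothesis enters exactly here: since $X$ is connected and nonempty, every locally constant function $X\to S$ is constant, so $\eta_S$ is a bijection, with inverse given by evaluating a locally constant function at any point. Applying this to $S=\mathcal{O}$ and to $S=\mathcal{M}$ shows that every component of $\tilde{\eta}$ is levelwise bijective, hence an isomorphism of small categories; being the unit of an adjunction, $\tilde{\eta}$ is automatically natural, so it is a natural isomorphism. I expect this connectedness step to be the only genuine obstacle—the rest is a levelwise bookkeeping reduction—and the one subtlety to handle with care is confirming that the set-level unit really corresponds to the inclusion of constant functions under the locally-constant-functions description of the constant sheaf.

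Finally, the closing assertion is the standard categorical fact that a left adjoint is full and faithful if and only if the unit of the adjunction is a natural isomorphism. Since $\tilde{\#}\circ\tilde{c}\dashv\tilde{\Gamma}$ and we have just shown its unit $\tilde{\eta}$ to be a natural isomorphism, it follows at once that $\tilde{\#}\circ\tilde{c}$ is full and faithful.
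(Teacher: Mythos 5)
Your proposal is correct and follows essentially the same route as the paper's own proof: reduce to the set-level unit by working levelwise on the object and morphism (pre)sheaves, identify the sheafification of a constant presheaf via exercise~2.7 of~\cite{MR1300636}, use connectedness to see that its global sections recover the original set, and then invoke the standard fact that an adjunction with invertible unit has a full and faithful left adjoint. Your writeup is somewhat more explicit than the paper's about exactly where connectedness enters (locally constant implies constant) and about checking that the unit corresponds to the inclusion of constant functions, but the argument is the same.
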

\begin{proof}
By standard facts on sheafification of constant presheaves (see e.g. exercise 2.7 of~\cite{MR1300636}), the sheafification of the constant presheaf of sets taking values in a set $Z$ has $Z$ as its global sections. Consequently the unit natural transformation 
\[ \eta: \id_{\Sets^{\mathcal{C}}} \to \Gamma\circ\#\circ c\] 
is a natural isomorphism. (Recall Convention~\ref{convention on tilde}: the symbols $\Gamma,\#,$ and $c$ refer to the classical constructions on sheaves of sets, while their levelwise applications to internal categories in sheaves of sets are decorated with tildes: $\tilde{\Gamma},\tilde{\#},$ and $\tilde{c}$.)
If $\mathcal{X}$ is a category object
\[ \xymatrix{ \mathcal{O} \ar[r] & \mathcal{M}\ar@<1ex>[l] \ar@<-1ex>[l] &
 \mathcal{M}\times_{\mathcal{O}}\mathcal{M} \ar[l] }\]
in sets, then application of $\eta$ necessarily yields an isomorphism on $\mathcal{O}, \mathcal{M},$ and $\mathcal{M}\times_{\mathcal{O}}\mathcal{M}$, while naturality gives the compatibility of the structure maps of $\mathcal{X}$ with those of $(\tilde{\Gamma}\circ\tilde{\#}\circ\tilde{c})(\mathcal{X})$. Consequently $\tilde{\eta}$ is an isomorphism.
%\[ \xymatrix{ (\Gamma\circ\#\circ c)\mathcal{O} \ar[r] & (\Gamma\circ\#\circ c)\mathcal{M}\ar@<1ex>[l] \ar@<-1ex>[l] &
% (\Gamma\circ\#\circ c)(\mathcal{M}\times_{\mathcal{O}}\mathcal{M})\ar[l].}\]
%By \ref{internal cat sheafification is a left adjoint} and the fact that $c$ and $\Gamma$ both preserve limits, this category is the global sections of $\tilde{\#}\circ\tilde{c}(\mathcal{X})$ and the unit of $\tilde{\#}\circ\tilde{c}\dashv\tilde{\Gamma}$ is a natural isomorphism.

It is a general (and elementary, but we still provide the proof) fact that, given functors $F: \mathcal{A} \rightarrow \mathcal{B}$ and $G: \mathcal{B} \rightarrow \mathcal{A}$ and an adjunction $F\dashv G$ whose unit natural transformation $\eta: \id_{\mathcal{A}} \rightarrow GF$ is invertible, the functor $F$ is full and faithful. This is simply because the composite map
\begin{equation}\label{composite 2308} \hom_{\mathcal{A}}(X,Y) \rightarrow \hom_{\mathcal{B}}(FX,FY) \stackrel{\cong}{\longrightarrow} \hom_{\mathcal{A}}(X,GFY)\end{equation}
is postcomposition with the unit map $Y \rightarrow GFY$, which is a bijection, and so the left-hand map in~\eqref{composite 2308} is also a bijection.
In the case where $F = \tilde{\#}\circ\tilde{c}$ and $G = \tilde{\Gamma}$, we get that $\tilde{\#}\circ\tilde{c}$ is full and faithful.
\end{proof}

\begin{lemma}
 For every nonempty small category $\mathcal{A}'$, there exists a contractible small category $\mathcal{T}$ and a cofibration $i: \mathcal{A}'\to\mathcal{T}$ such that the image of $i$ under $\tilde{\#}\circ\tilde{c}$ remains a cofibration into a contractible object.
\end{lemma}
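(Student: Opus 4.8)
The plan is to build $\mathcal{T}$ as the \emph{indiscrete category} on the object set of $\mathcal{A}'$: that is, $\ob\mathcal{T} = \ob\mathcal{A}'$, with exactly one morphism $X\to Y$ for each ordered pair of objects $X,Y$. I would let $i : \mathcal{A}'\to\mathcal{T}$ be the functor that is the identity on objects and sends every morphism of $\mathcal{A}'$ to the unique available morphism of $\mathcal{T}$ between the corresponding objects. Since composition and identities in $\mathcal{T}$ are forced (all hom-sets are singletons), $i$ is automatically a well-defined functor, and it is a bijection on objects, hence injective on objects; by Remark~\ref{j-t special case} it is therefore a cofibration in $\SmCat$. (This is exactly the construction appearing in condition (2) of the introduction.)

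Next I would check that $\mathcal{T}$ is contractible, which is where the hypothesis that $\mathcal{A}'$ is nonempty enters. The unique functor $p : \mathcal{T}\to 1$ to the terminal category is essentially surjective because $\mathcal{T}$ is nonempty, and it is full and faithful because every hom-set of $\mathcal{T}$ has exactly one element. By Remark~\ref{j-t special case}, $p$ is therefore a categorical equivalence, i.e. a weak equivalence, so $\mathcal{T}$ is weakly equivalent to the terminal object $1$ of $\SmCat$.

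It then remains to transport both facts across the functor $F := \tilde{\#}\circ\tilde{c}$. By Theorem~\ref{spheres to spheres}, $F$ sends cofibrations to cofibrations, so $F(i)$ is a cofibration in $\Cat\Sh(\mathcal{C},\tau)$. The same theorem tells us that $F$ preserves finite limits and hence the terminal object, so $F(1)\cong 1$, and that $F$ preserves weak equivalences, so $F(p): F(\mathcal{T})\to F(1)\cong 1$ is a weak equivalence. Thus $F(\mathcal{T})$ is weakly equivalent to the terminal object of $\Cat\Sh(\mathcal{C},\tau)$, i.e. contractible, and $F(i)$ is a cofibration into a contractible object, as required.

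The argument is short because the real work has already been done in Theorem~\ref{spheres to spheres}; the only genuine choice is the shape of $\mathcal{T}$, and I expect no serious obstacle. The one point requiring care—and the step I would flag as the main subtlety—is that contractibility of the \emph{target} is deduced from two \emph{separate} preservation properties of $F$ (preservation of finite limits, which identifies $F(1)$ with the terminal object, and preservation of weak equivalences, which makes $F(p)$ a weak equivalence) rather than from a single slogan that "$F$ preserves contractibility." Accordingly, the cleanest way to organize the write-up is to keep the cofibration claim and the contractibility claim apart, invoking Theorem~\ref{spheres to spheres} once for each, and to note explicitly that nonemptiness of $\mathcal{A}'$ is exactly what guarantees $\mathcal{T}$ is contractible rather than empty.
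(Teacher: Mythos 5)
Your proposal is correct and follows essentially the same route as the paper: take $\mathcal{T}$ to be the indiscrete category on $\ob\mathcal{A}'$ (the paper's $\mathbf{C}_{\lvert\mathcal{A}'\rvert}$), observe that the identity-on-objects functor is a cofibration and that $\mathcal{T}$ is equivalent to $1$ since it is nonempty with singleton hom-sets, and then transport both properties through $\tilde{\#}\circ\tilde{c}$ via Theorem~\ref{spheres to spheres}. Your explicit separation of the two preservation properties (finite limits for the terminal object, weak equivalences for contractibility) just spells out what the paper's single citation of that theorem leaves implicit.
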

\begin{proof}
 By Remark~\ref{j-t special case}, cofibrations in small categories under the Joyal-Tierney model structure are simply functors which are injective on objects, and weak equivalences are honest equivalences of categories.  For a nonempty set $S$, let $\mathbf{C}_S$ be the category with objects $S$ and exactly one arrow in every hom-set.  Such a category is necessarily equivalent to the terminal category $1$.  Writing $\lvert\mathcal{A}'\rvert$ for the set of objects of $\mathcal{A}'$, there is a unique map $i$ from $\mathcal{A}'$ to $\mathcal{T} := \mathbf{C}_{\lvert\mathcal{A}'\rvert}$ which is the identity on objects; $i$ is necessarily a cofibration.  Its image under $\tilde{\#}\circ\tilde{c}$ remains a cofibration into a contractible object by Theorem~\ref{spheres to spheres}.
\end{proof}

\begin{theorem}\label{connected space}
Let $X$ be a connected topological space, and let $Z$ be a $n$-dimensional CW-complex in $\Cat\Sh(X)$.
Then there exists a weak equivalence 
\[ Z \rightarrow \left(\tilde{\#}\circ\tilde{c}\right)(\overline{Z})\]
in $\Cat\Sh(X)$, where $\overline{Z}$ is an $n$-dimensional CW-complex in $\SmCat$.
\end{theorem}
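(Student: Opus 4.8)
The plan is to induct on the dimension $n$, exploiting that $\tilde{\#}\circ\tilde{c}$ is a left Quillen functor which, for $X$ connected, is moreover full and faithful (Proposition~\ref{unit isomorphism}). For the base case $n=0$, a $0$-dimensional CW-complex $Z$ in $\Cat\Sh(X)$ is by Definition~\ref{def of spheres} a coproduct $\coprod_{i\in I}1$ of copies of the terminal object. Since $\tilde{\#}\circ\tilde{c}$ is a left adjoint (Theorem~\ref{spheres to spheres}) it preserves coproducts, and since it preserves finite limits it preserves the terminal object; so setting $\overline{Z}=\coprod_{i\in I}1$ in $\SmCat$ (a $0$-dimensional CW-complex there) yields a canonical isomorphism $(\tilde{\#}\circ\tilde{c})(\overline{Z})\cong\coprod_{i\in I}1\cong Z$, in particular a weak equivalence.

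For the inductive step, suppose the result holds in dimension $n-1$ and let $Z$ be an $n$-dimensional CW-complex, so by Definition~\ref{def of spheres} it is the one-sided homotopy pushout of an attaching map $f\colon\coprod_{i\in I}S_i\to Y$ along $\coprod_{i\in I}p_{S_i}\colon\coprod_{i\in I}S_i\to\coprod_{i\in I}1$, where $Y$ is an $(n-1)$-dimensional CW-complex and each $S_i$ is an $(n-1)$-sphere. Because $\Cat\Sh(X)$ is left proper (Observation~\ref{j-t is left-proper}), Proposition~\ref{one-sided homotopy pushouts} lets me replace the one-sided homotopy pushout by the ordinary one, so $Z$ is weakly equivalent to the homotopy pushout of $\coprod_{i\in I}1\leftarrow\coprod_{i\in I}S_i\xrightarrow{f}Y$. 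By the inductive hypothesis there is a weak equivalence $u\colon Y\to(\tilde{\#}\circ\tilde{c})(\overline{Y})$ for some $(n-1)$-dimensional CW-complex $\overline{Y}$ in $\SmCat$. Fixing an $(n-1)$-sphere $\overline{S}$ in $\SmCat$, Theorem~\ref{spheres to spheres} shows $(\tilde{\#}\circ\tilde{c})(\overline{S})$ is an $(n-1)$-sphere in $\Cat\Sh(X)$; and since the $0$-sphere is unique up to isomorphism and the inductive homotopy-pushout construction of spheres is well-defined up to weak equivalence, all $(n-1)$-spheres are weakly equivalent, so each $S_i$ admits a weak equivalence $v_i\colon S_i\to(\tilde{\#}\circ\tilde{c})(\overline{S})$.

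Next I transport the attaching map into the image of $\tilde{\#}\circ\tilde{c}$. Every object of $\Cat\Sh(X)$ is both fibrant and cofibrant, so coproducts preserve weak equivalences and $v:=\coprod_{i\in I}v_i\colon\coprod_{i\in I}S_i\to(\tilde{\#}\circ\tilde{c})(\coprod_{i\in I}\overline{S})$ is a weak equivalence; by Lemma~4.24 of~\cite{MR1361887} it has a strict homotopy inverse $\tilde{v}$. I form the honest composite $g:=u\circ f\circ\tilde{v}\colon(\tilde{\#}\circ\tilde{c})(\coprod_{i\in I}\overline{S})\to(\tilde{\#}\circ\tilde{c})(\overline{Y})$, and by full-faithfulness (Proposition~\ref{unit isomorphism}) write $g=(\tilde{\#}\circ\tilde{c})(\overline{g})$ for a unique $\overline{g}\colon\coprod_{i\in I}\overline{S}\to\overline{Y}$ in $\SmCat$. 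Let $\overline{Z}$ be the one-sided homotopy pushout in $\SmCat$ of $\overline{g}$ along $\coprod_{i\in I}p_{\overline{S}}$; this is an $n$-dimensional CW-complex. The two left legs are both induced by the unique maps to the terminal object, which $\tilde{\#}\circ\tilde{c}$ preserves, so the left square strictly commutes, while the right square commutes up to homotopy since $g\circ v=u\circ f\circ\tilde{v}\circ v\simeq u\circ f$. Thus $v,u$, and the identity give an objectwise weak equivalence of spans from $\coprod_{i\in I}1\leftarrow\coprod_{i\in I}S_i\xrightarrow{f}Y$ to $\tilde{\#}\circ\tilde{c}$ applied to $\coprod_{i\in I}1\leftarrow\coprod_{i\in I}\overline{S}\xrightarrow{\overline{g}}\overline{Y}$. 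Since homotopy pushouts are invariant under such objectwise weak equivalences (section~10 of~\cite{MR1361887}) and $\tilde{\#}\circ\tilde{c}$ preserves homotopy pushouts up to weak equivalence (being a left adjoint that preserves cofibrations, cofibrant objects, and weak equivalences, as in the proof of Theorem~\ref{spheres to spheres}), I obtain $Z\simeq(\tilde{\#}\circ\tilde{c})(\overline{Z})$; inverting this weak equivalence, again using that all objects are fibrant and cofibrant, produces the desired weak equivalence $Z\to(\tilde{\#}\circ\tilde{c})(\overline{Z})$.

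The hard part is the middle step: realizing the arbitrary, possibly ``exotic,'' attaching map $f$ as $\tilde{\#}\circ\tilde{c}$ of a genuine functor in $\SmCat$. Connectedness of $X$ is precisely what makes this possible through Proposition~\ref{unit isomorphism}, and the two-point example earlier in this section shows the hypothesis cannot be dropped. The delicate technical point is that full-faithfulness is a statement about \emph{strict} hom-sets, whereas I only control $f$ up to homotopy and only know $v$ to be a weak equivalence; this is reconciled by choosing a strict homotopy inverse $\tilde{v}$ (available because every object is fibrant–cofibrant), which makes $g$ an honest morphism literally between objects in the image so that fullness applies, while the square need only commute up to homotopy for the homotopy pushouts to agree.
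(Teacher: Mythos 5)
Your proof is correct in outline and reaches the theorem, but it takes a genuinely different route from the paper at the one point where the argument is delicate. The paper orients the sphere comparison the other way: it takes the weak equivalence $\theta\colon(\tilde{\#}\circ\tilde{c})(S^n)\to S^n$ \emph{from} the constant side \emph{to} the given sphere, so that $\varphi\circ\psi\circ(\coprod\theta)$ is an honest morphism between objects in the image of $\tilde{\#}\circ\tilde{c}$ and fullness applies with no homotopy inverse anywhere; it then manufactures a contractible receptacle $\mathcal{T}'$ by pushing a cofibration into a contractible object out along $\theta$, so that every square in sight is a strictly commuting pushout of a cofibration and the comparison map $h$ falls out of the universal property, with left-properness and the pasting lemma for pushouts showing $h$ is a weak equivalence. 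You instead orient the comparison as $v\colon\coprod S_i\to(\tilde{\#}\circ\tilde{c})(\coprod\overline{S})$, invert it up to homotopy, and end up with a span comparison whose right square commutes only up to homotopy. That step is where your citation does not quite carry the weight you put on it: the invariance statement in section~10 of~\cite{MR1361887} is for \emph{strictly} commuting objectwise weak equivalences of diagrams, and a homotopy-commutative map of spans does not by itself induce a map of homotopy pushouts. Your argument can be completed, but you need the additional (standard) lemma that the homotopy pushout of $A\leftarrow B\to C$ with $B$ cofibrant depends only on the left homotopy classes of the two legs --- proved, say, by replacing $B$ by a cylinder object $B\times I$ and comparing both attaching maps to the homotopy $H\colon B\times I\to C$ via the strictly commuting end inclusions, which are weak equivalences. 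With that lemma inserted, your chain of identifications $Z\simeq\mathrm{hocolim}(\coprod 1\leftarrow\coprod S_i\xrightarrow{u\circ f}(\tilde{\#}\circ\tilde{c})(\overline{Y}))\simeq\mathrm{hocolim}(\coprod 1\leftarrow\coprod S_i\xrightarrow{g\circ v}(\tilde{\#}\circ\tilde{c})(\overline{Y}))\simeq(\tilde{\#}\circ\tilde{c})(\overline{Z})$ goes through, since all objects are cofibrant and fibrant and the backwards arrows can be inverted. The trade-off is clear: your version isolates the use of fullness cleanly and makes explicit why connectedness is needed, at the cost of working with homotopy-commutative diagrams; the paper's version is longer to set up (the $\mathcal{T}$, $\mathcal{T}'$ construction) but never leaves the world of strict pushouts.
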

\begin{proof}
Our proof is by induction on $n$. We begin with the base case, $n=0$: a zero-dimensional CW-complex in $\Cat\Sh(X)$ is, by Definition~\ref{def of spheres}, a cofibrant object of $\Cat\Sh(X)$ which is a coproduct of terminal objects. By Theorem~\ref{spheres to spheres}, $\tilde{\#}\circ\tilde{c}$ preserves finite limits, so in particular it sends the terminal object to the terminal object; since $\tilde{\#}\circ\tilde{c}$ is also a left adjoint, it preserves coproducts, so $\tilde{\#}\circ \tilde{c}$ applied to a coproduct of terminal objects is a coproduct of terminal objects. So each zero-dimensional CW-complex in $\Cat\Sh(X)$ is $\tilde{\#}\circ\tilde{c}$ applied to a zero-dimensional CW-complex in $\SmCat$.

Now for the inductive step: choose an $n$-dimensional CW-complex $\mathcal{A}$ in $\Cat\Sh(X)$.  The inductive hypothesis is that there exists a weak equivalence $\varphi: \mathcal{A}\to\tilde{\#}\circ\tilde{c}(\mathcal{A}')$,  where $\mathcal{A}'$ is an $n$-dimensional CW-complex in $\SmCat$.  We want to show that the homotopy pushout of the diagram
 \begin{center}
  \begin{tikzcd}
   \coprod S^n \arrow{d} \arrow{r}{\psi} &\mathcal{A}\\
   \coprod 1
  \end{tikzcd}
 \end{center}
 is weakly equivalent to the image of an $(n + 1)$-complex under $\tilde{\#}\circ\tilde{c}$, for any coproduct of $n$-spheres and any attaching map $\psi$.
 
 By Theorem~\ref{spheres to spheres}, there exists a weak equivalence $\theta: \left(\tilde{\#}\circ\tilde{c}\right)\left(S^n\right)\to S^n$, and the previous lemma gives a cofibration $i: S^n\to\mathcal{T}$ in small categories, with $\mathcal{T}$ contractible, and such that its image under $\tilde{\#}\circ\tilde{c}$ remains a cofibration with contractible codomain.  Taking the pushout
 \begin{center}
  \begin{tikzcd}
   \left(\tilde{\#}\circ\tilde{c}\right)\left(S^n\right) \arrow[swap]{d}{\left(\tilde{\#}\circ\tilde{c}\right)(i)} \arrow{r}{\theta} &S^n \arrow{d}{i'}\\
   \left(\tilde{\#}\circ\tilde{c}\right)\left(\mathcal{T}\right) \arrow{r} &\mathcal{T}'
  \end{tikzcd}
 \end{center}
 yields $\mathcal{T}'$ weakly equivalent to the contractible object $\tilde{\#}\circ\tilde{c}(\mathcal{T})$ by left-properness (see Observation~\ref{j-t is left-proper}), and a cofibration $i'$.  Now $\tilde{\#}\circ\tilde{c}$ is a left adjoint so preserves colimits, a coproduct of cofibrations remains a cofibration, and coproducts commute with pushouts.  Altogether, the previous pushout diagram yields the next pushout diagram:
 \begin{center}
  \begin{tikzcd}
   \left(\tilde{\#}\circ\tilde{c}\right)\left(\coprod S^n\right) \arrow[swap]{d}{\left(\tilde{\#}\circ\tilde{c}\right)(\coprod i)} \arrow{r}{\coprod\theta} &\coprod S^n \arrow{d}{\coprod i'}\\
   \left(\tilde{\#}\circ\tilde{c}\right)\left(\coprod\mathcal{T}\right) \arrow{r} &\coprod\mathcal{T}',
  \end{tikzcd}
 \end{center}
 where both $\left(\tilde{\#}\circ\tilde{c}\right)(\coprod i)$ and $\coprod i'$ are cofibrations.
 
 The homotopy pushout we are interested in is given by the following pushout diagram.
 \begin{center}
  \begin{tikzcd}
   \coprod S^n \arrow[swap]{d}{\coprod i'} \arrow{r}{\psi} &\mathcal{A} \arrow{d}\\
   \coprod\mathcal{T}' \arrow{r} &\mathcal{B}
  \end{tikzcd}
 \end{center}
 
 It is standard that the concatenation
 \begin{center}
  \begin{tikzcd}
   \left(\tilde{\#}\circ\tilde{c}\right)\left(\coprod S^n\right) \arrow[swap]{d}{\left(\tilde{\#}\circ\tilde{c}\right)\left(\coprod i\right)} \arrow{r}{\coprod\theta} &\coprod S^n \arrow{d}{\coprod i'} \arrow{r}{\psi} &\mathcal{A} \arrow{d}\\
   \left(\tilde{\#}\circ\tilde{c}\right)(\coprod\mathcal{T}) \arrow{r} &\coprod\mathcal{T}' \arrow{r} &\mathcal{B}
  \end{tikzcd}
 \end{center}
 of the two pushout squares is itself a pushout. 
 Also, the pushout
 \begin{center}
  \begin{tikzcd}
   \left(\tilde{\#}\circ\tilde{c}\right)\left(\coprod S^n\right) \arrow[swap]{d}{\left(\tilde{\#}\circ\tilde{c}\right)\left(\coprod i\right)} \arrow{r}{\varphi\circ\psi\circ\left(\coprod\theta\right)} & \left(\tilde{\#}\circ\tilde{c}\right)(\mathcal{A}') \arrow{d}{j}\\
   \left(\tilde{\#}\circ\tilde{c}\right)\left(\coprod\mathcal{T}\right) \arrow{r}{k} &\mathcal{B}'
  \end{tikzcd}
 \end{center}
 is entirely in the image of $\tilde{\#}\circ\tilde{c}$ as a consequence of $\tilde{\#}\circ\tilde{c}$ being a full functor (by Proposition~\ref{unit isomorphism}) and a left adjoint, hence preserving colimits.  We conclude not only that $\mathcal{B}' = \left(\tilde{\#}\circ\tilde{c}\right)(\mathcal{D})$ for some small category $\mathcal{D}$, but also that $\mathcal{D}$ is an $(n + 1)$-dimensional CW-complex.
 
 By the universal property of pushouts, there exists a unique $h$ making the diagram commute.
 \begin{center}
  \begin{tikzcd}
   \left(\tilde{\#}\circ\tilde{c}\right)\left(\coprod S^n\right) \arrow[swap]{d}{\left(\tilde{\#}\circ\tilde{c}\right)\left(\coprod i\right)} \arrow{r}{\psi\circ\left(\coprod\theta\right)} &\mathcal{A} \arrow{d} \arrow[bend left]{ddr}{j\circ\varphi}\\
   \left(\tilde{\#}\circ\tilde{c}\right)\left(\coprod\mathcal{T}\right) \arrow{r} \arrow[bend right, swap]{drr}{k} &\mathcal{B} \arrow[dashed]{dr}{h}\\
   &&\mathcal{B}'
  \end{tikzcd}
 \end{center}
 
 We now redraw this last diagram as follows.
 \begin{center}
  \begin{tikzcd}
   \left(\tilde{\#}\circ\tilde{c}\right)\left(\coprod S^n\right) \arrow[swap]{d}{\tilde{\#}\circ\tilde{c}\left(\coprod i\right)} \arrow{r}{\psi\circ\left(\coprod\theta\right)} &\mathcal{A} \arrow{d} \arrow{r}{\varphi} & \left(\tilde{\#}\circ\tilde{c}\right)(\mathcal{A}') \arrow{d}{j}\\
   \left(\tilde{\#}\circ\tilde{c}\right)\left(\coprod\mathcal{T}\right) \arrow{r} &\mathcal{B} \arrow[swap]{r}{h} &\mathcal{B}'
  \end{tikzcd}
 \end{center}
 
 The outer rectangle and left square have both been shown to be pushouts; it is classical (the dual of the ``pullback lemma'') that the right square is then also a pushout.  Finally, the middle arrow is a cofibration as it is a pushout of the cofibration $\left(\tilde{\#}\circ\tilde{c}\right)\left(\coprod i\right)$, so left-properness of $\Cat\Sh(X)$ gives $h$ as a weak equivalence as the pushout of the weak equivalence $\varphi$ along a cofibration. Now $h: \mathcal{B} \rightarrow \mathcal{B}^{\prime} = \left(\tilde{\#}\circ\tilde{c}\right)(\mathcal{D})$ is a weak equivalence from $\mathcal{B}$ to $\tilde{\#}\circ\tilde{c}$ applied to an $(n+1)$-dimensional CW-complex in $\SmCat$, completing the inductive step.
\end{proof}

\begin{theorem} \label{connected space corollary}
Let $X$ be a connected topological space. Then the homotopy category $\Ho\left(\CW\Cat\Sh(X)\right)$ of CW-complexes in internal categories in sheaves on $X$ is equivalent to $\Ho(\CW\SmCat)$, the homotopy category of CW-complexes in small categories.
\end{theorem}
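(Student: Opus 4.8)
The plan is to show that the functor $F := \tilde{\#}\circ\tilde{c}$ is the desired equivalence. First I would verify that $F$ carries the category $\CW\SmCat$ of CW-complexes in $\SmCat$ into the category of CW-complexes in $\Cat\Sh(X)$. By Theorem~\ref{spheres to spheres}, $F$ preserves $n$-spheres, the terminal object, cofibrations, cofibrant objects, and weak equivalences; being a left adjoint it also preserves coproducts and all colimits. Since a one-sided homotopy pushout (Definition~\ref{def of one-sided htpy pushout}) is an ordinary pushout of a cofibration, and $F$ preserves such pushouts as well as the acyclic-fibration replacement $\tilde{Y}\to Y$ up to weak equivalence, an induction on dimension using Definition~\ref{def of spheres} shows that $F$ sends each $n$-dimensional CW-complex to an object weakly equivalent to an $n$-dimensional CW-complex. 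Because $F$ preserves all weak equivalences, it carries weak equivalences between CW-complexes to weak equivalences, and hence descends to a functor $\overline{F}\colon \Ho(\CW\SmCat)\to \Ho(\CW\Cat\Sh(X))$ on the localizations at the weak equivalences.

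Next I would establish essential surjectivity of $\overline{F}$, which is exactly the content of Theorem~\ref{connected space}: every $n$-dimensional CW-complex $Z$ in $\Cat\Sh(X)$ admits a weak equivalence $Z\to F(\overline{Z})$ with $\overline{Z}$ an $n$-dimensional CW-complex in $\SmCat$, so $Z$ is isomorphic to $\overline{F}(\overline{Z})$ in $\Ho(\CW\Cat\Sh(X))$.

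The remaining and most delicate step is full faithfulness of $\overline{F}$. Here I would exploit the Quillen adjunction $F\dashv\tilde{\Gamma}$ and its derived adjunction $\mathbb{L}F\dashv\mathbb{R}\tilde{\Gamma}$; since every CW-complex is cofibrant and $F$ preserves all weak equivalences, $\mathbb{L}F$ agrees with $\overline{F}$ on CW-complexes. A left adjoint between homotopy categories is fully faithful precisely when its derived unit is an isomorphism, so the problem reduces to showing that for each CW-complex $B$ in $\SmCat$ the derived unit
\[ B \xrightarrow{\ \cong\ } \tilde{\Gamma}F(B) \longrightarrow \tilde{\Gamma}\big(RF(B)\big) = \mathbb{R}\tilde{\Gamma}(F(B)) \]
is a weak equivalence, where $RF(B)$ is a fibrant replacement of $F(B)$. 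The first map is an isomorphism by Proposition~\ref{unit isomorphism} (the unit $\tilde{\eta}$ is a natural isomorphism for connected $X$), so everything comes down to showing that $\tilde{\Gamma}$ carries the acyclic cofibration $F(B)\to RF(B)$ to a weak equivalence.

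The main obstacle is precisely this last point: the object $F(B)$ --- the levelwise-sheafified constant category on $B$ --- is cofibrant but in general \emph{not} fibrant (it is the constant sheaf of categories, not the stackification), so the fact proved above that $\tilde{\Gamma}$ preserves weak equivalences between fibrant objects does not apply directly. I expect to resolve this by computing $\mathbb{R}\tilde{\Gamma}(F(B))$ directly: for a connected space $X$, the global sections of a fibrant (stack) replacement of the constant sheaf of categories on $B$ should recover $B$ up to weak equivalence, exactly as the global sections of a constant sheaf on a connected space recover its fiber. Combined with the identification $\tilde{\Gamma}F(B)\cong B$, this yields that the derived unit is a weak equivalence, hence $\overline{F}$ is fully faithful; together with essential surjectivity this makes $\overline{F}$ the asserted equivalence of homotopy categories.
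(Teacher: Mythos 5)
Your overall skeleton---essential surjectivity via Theorem~\ref{connected space}, plus full faithfulness of the induced functor on homotopy categories---matches the paper's, and the essential-surjectivity step is handled identically. The divergence, and the problem, is your treatment of full faithfulness. You reduce it to the derived unit $B\to\mathbb{R}\tilde{\Gamma}\left((\tilde{\#}\circ\tilde{c})(B)\right)$ being an isomorphism, and then assert that the global sections of a fibrant replacement of the constantified category on $B$ ``should recover $B$'' because $X$ is connected. This is precisely the step you do not prove, and it is not a gap you can expect to fill: fibrant replacement in the Joyal--Tierney model structure is a form of stackification, and the global sections of the stackification of a constant groupoid on a connected space do not in general recover that groupoid. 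For instance, take $X=S^1$ and $B$ the one-object groupoid on a group $G$. A fibrant replacement of $(\tilde{\#}\circ\tilde{c})(B)$ is weakly equivalent to the stack of locally trivial $G$-torsors, whose global sections form the groupoid of $G$-coverings of $S^1$; this groupoid has one component for each conjugacy class of elements of $G$, not a single component, so the derived unit is not a weak equivalence even on CW-complexes. The analogy with constant sheaves of \emph{sets}, where connectedness does suffice, breaks down exactly because fibrancy is a nontrivial condition on internal categories.

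The paper avoids derived global sections altogether, and you should too. Its faithfulness argument uses the strict, underived identity $\tilde{\Gamma}\circ\tilde{\#}\circ\tilde{c}=\id$ supplied by Proposition~\ref{unit isomorphism}, which exhibits $\Ho(\tilde{\#}\circ\tilde{c})$ as admitting a retraction after passage to homotopy categories (invoking the pseudo-$2$-functoriality of $\Ho$); and its fullness argument is deduced directly from the point-set fullness of $\tilde{\#}\circ\tilde{c}$, again from Proposition~\ref{unit isomorphism}. In other words, the paper never evaluates $\mathbb{R}\tilde{\Gamma}$ on a non-fibrant object, whereas your route forces you to do exactly that, at the one place where the answer goes wrong. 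Replace the derived-adjunction argument with the underived one and the rest of your outline (preservation of CW-complexes by $\tilde{\#}\circ\tilde{c}$, descent to homotopy categories, essential surjectivity from Theorem~\ref{connected space}) goes through as you describe.
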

\begin{proof}
For the duration of this proof, we adopt the convention 
that, given a functor $F$ between $\SmCat$ and $\Cat(\Sh(X))$ which sends $CW$-complexes to $CW$-complexes, we will write $F^{\prime}$ for its restriction to a functor between the $CW$-complex categories $\CW(\SmCat)$ and $\CW(\Cat(\Sh(X)))$.

We claim that the functor $\Ho\left((\tilde{\#}\circ \tilde{c})^{\prime}\right) : \Ho\left(\left(\CW(\SmCat)\right)\right) \rightarrow \Ho\left(\left(\CW\Cat\Sh(X)\right)\right)$ is full, faithful, and essentially surjective; clearly this implies the claim made in the statement of the theorem. We prove each of these three properties, in order:
\begin{description}
\item[Fullness] By Proposition~\ref{unit isomorphism}, $\tilde{\#}\circ\tilde{c}$ is full; so $\Ho(\tilde{\#}\circ\tilde{c})$ is also full, so $\Ho\left((\tilde{\#}\circ\tilde{c})^{\prime}\right)$ is also full.
\item[Faithfulness]   
Since $\tilde{\Gamma}\circ \tilde{\#} \circ\tilde{c} = \id$, we also have 
\begin{align}
\label{equality 0348347} \Ho\left( \tilde{\Gamma}\circ \tilde{\#} \circ\tilde{c}\right) 
 &\simeq \Ho\left( \tilde{\Gamma}\right)\circ \Ho\left(\tilde{\#} \circ\tilde{c}\right) \\
\nonumber &\simeq \Ho(\id_{\SmCat}) \\
\label{equality 0348347a} &\simeq \id_{\Ho(\SmCat)}
\end{align}
with equalities~\ref{equality 0348347} and~\ref{equality 0348347a} due to $\Ho$ being a pseudo-$2$-functor from model categories to categories, by Theorem~1.4.3 of~\cite{MR1650134}. (Here it is important that we are using the functors $\tilde{\#} \circ\tilde{c}$ and $\Gamma$, which are defined on the model categories defined by Joyal and Tierney, rather than their restrictions $(\tilde{\#} \circ\tilde{c})^{\prime}$ and $\Gamma^{\prime}$ to the $CW$-complex categories, which are not known to be model categories!)
So $\Ho\left(\tilde{\#} \circ\tilde{c}\right)$ is faithful, so its restriction
$\Ho\left((\tilde{\#} \circ\tilde{c})^{\prime}\right)$ is also faithful.
\item[Essential surjectivity] 
Immediate from Theorem~\ref{connected space}. (This is the only place in the proof of the present theorem where it is essential that we work with $(\tilde{\#}\circ \tilde{c})^{\prime}$, rather than $\tilde{\#}\circ \tilde{c}$, which is {\em not} essentially surjective!)
\end{description}
\end{proof}

\section{Formal CW-complexes in small categories} 
Next we classify CW-complexes in the category of small categories under the Joyal-Tierney model structure. % starting with the disjoint union $1\amalg 1$ of two terminals as our 0-sphere $S^0$.  
%We construct spheres inductively by taking suspensions as homotopy pushouts, as in Definition~\ref{def of spheres}.
{\em We remind the reader that our notion of CW-complexes is defined up to weak equivalence, not up to isomorphism}, as explained in the remark preceding Definition~\ref{def of spheres}.
\begin{prop}\label{spheres}
 The spheres in the category of small categories under the Joyal-Tierney model structure are, up to weak equivalence, as follows:
 \begin{align*}
  S^0 &= 1\amalg 1\\
  S^1 &\simeq \mathbb{Z},\mbox{\ \ as\ a\ one-object\ category}\\
  S^n &\simeq 1 \mbox{\ \ for\ all\ \ } n > 1
 \end{align*}
\end{prop}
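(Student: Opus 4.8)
The plan is to compute the spheres inductively, using the explicit description of the Joyal--Tierney model structure on $\SmCat$ from Remark~\ref{j-t special case} (cofibrations are the functors injective on objects, weak equivalences are the equivalences of categories, fibrations are the isofibrations) together with the fact, recorded in Observation~\ref{j-t is left-proper}, that this model structure is left proper. Left-properness lets me replace each homotopy pushout defining a suspension by a one-sided homotopy pushout via Proposition~\ref{one-sided homotopy pushouts}, so at each stage it suffices to factor one leg of the relevant span as a cofibration followed by an acyclic fibration and then compute an \emph{ordinary} pushout in $\Cat$. Since every object of $\SmCat$ is cofibrant, all the cofibrancy hypotheses in Definition~\ref{def of spheres} and Definition~\ref{def of one-sided htpy pushout} are automatic.

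The base case $S^0 = 1\amalg 1$ is the definition. For $S^1$ I would compute the suspension of $S^0$, i.e.\ the homotopy pushout of $1\leftarrow S^0\rightarrow 1$. I would factor the map $S^0\rightarrow 1$ as the object-injective functor $S^0\hookrightarrow \mathbf{C}_{\{0,1\}}$ into the ``walking isomorphism'' (the indiscrete category on two objects) followed by the functor $\mathbf{C}_{\{0,1\}}\rightarrow 1$; the first functor is a cofibration, and the second is full, faithful, and surjective on objects, hence an acyclic fibration. The one-sided homotopy pushout is then the ordinary pushout $1\amalg_{S^0}\mathbf{C}_{\{0,1\}}$ in $\Cat$, which identifies the two objects of $\mathbf{C}_{\{0,1\}}$. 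The key point is that collapsing the two objects of the walking isomorphism turns its generating isomorphism into an invertible endomorphism subject only to the relations that it and its inverse compose to the identity; the endomorphism monoid of the single object is therefore the free group on one generator, so the pushout is $\mathbb{Z}$ as a one-object category, giving $S^1\simeq \mathbb{Z}$.

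For $S^2$ I would compute the suspension of $S^1\simeq \mathbb{Z}$. Here the crucial observation is that $\mathbb{Z}$, as a one-object category, has a single object, so the collapse functor $\mathbb{Z}\rightarrow 1$ is already injective on objects and hence is itself a cofibration; factoring it as itself followed by $\id_1$ exhibits the one-sided homotopy pushout as the ordinary pushout $1\amalg_{\mathbb{Z}}1$. In this pushout the generator of $\mathbb{Z}$ maps to the identity in each copy of $1$, so it is killed, and the pushout is the terminal category $1$; thus $S^2\simeq 1$. (More conceptually: any functor from $\mathbb{Z}$ into a contractible, i.e.\ indiscrete, category must send the generator to an identity, since every hom-set of an indiscrete category is a singleton.) The higher spheres follow by induction: the suspension of $1$ is the homotopy pushout of $1\leftarrow 1\rightarrow 1$, which is again $1$, and since the homotopy pushout is invariant under weak equivalence, $S^{n-1}\simeq 1$ implies $S^n\simeq 1$ for every $n>2$.

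I expect the main obstacle to be the careful computation of the two pushouts in $\Cat$, since pushouts of categories are not computed naively on morphism sets: one must verify that $1\amalg_{S^0}\mathbf{C}_{\{0,1\}}$ really is the free group $\mathbb{Z}$ (viewed as a one-object groupoid) rather than some quotient or larger category, and dually that collapsing the loop of $\mathbb{Z}$ in the suspension genuinely produces the terminal category. Everything else---the left-properness reduction, the verification that each chosen factorization consists of a cofibration followed by an acyclic fibration, and the weak-equivalence-invariance used in the induction---is routine given the earlier results.
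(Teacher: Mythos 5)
Your proposal is correct and follows essentially the same route as the paper's own proof: replace $S^0\to 1$ by the cofibration into the indiscrete two-object category $\mathbf{C}_2$, compute the ordinary pushout to get $\mathbb{Z}$ as a one-object groupoid, observe that both legs of the suspension of $\mathbb{Z}$ are already cofibrations so the ordinary pushout $1\amalg_{\mathbb{Z}}1\cong 1$ computes the homotopy pushout, and conclude the higher spheres are terminal. The only (harmless) slip is the parenthetical identification of ``contractible'' with ``indiscrete,'' which is not needed for the argument.
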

\begin{proof}
By Observation~\ref{j-t is left-proper},
% Every object in the category of small categories is cofibrant under the Joyal-Tierney model structure, because the map from the empty category to any other category is vacuously injective on objects.  Following Reedy\cite{MR2624278}, this tells us that 
$\SmCat$ is left proper, implying that the homotopy pushout of the diagram
 \begin{center}
 \begin{tikzcd}
   1\amalg 1 \arrow{d}\arrow{r} &1\\
   1
  \end{tikzcd}
 \end{center}
coincides (up to weak equivalence) with the pushout of the diagram
 \begin{center}
  \begin{tikzcd}
   1\amalg 1 \arrow{d}\arrow{r} &1\\
   \mathbf{C}_2
  \end{tikzcd}
 \end{center}
 where $\mathbf{C}_2$ is the category consisting of two objects linked by a single isomorphism.  This pushout identifies the two points of $\mathbf{C}_2$ while leaving no relations on the isomorphism, yielding the integers $\mathbb{Z}$ as a 1-sphere. Suspending $\mathbb{Z}$ as a pushout is done by the diagram
 \begin{center}
  \begin{tikzcd}
   \mathbb{Z} \arrow{d}\arrow{r} &1\\
   1
  \end{tikzcd}
 \end{center}
 in which both maps are already cofibrations, implying the homotopy pushout is just the ordinary pushout, which is clearly the terminal $1$.  As the suspension of the terminal is just the terminal again, the proof is complete.
\end{proof}

\begin{theorem}\label{1-complexes}
 A small category a $1$-dimensional CW-complex, under the Joyal-Tierney model structure,
if and only if it is a groupoid with the property that the automorphism group of each object is a free group.
\end{theorem}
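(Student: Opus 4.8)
The plan is to unwind Definition~\ref{def of spheres} and identify a $1$-dimensional CW-complex explicitly as the free groupoid on a graph. By that definition, a $1$-dimensional CW-complex in $\SmCat$ is a one-sided homotopy pushout of a map $\coprod_{i\in I} S^0 \to Y$ along $\coprod_{i\in I} p_{S_i}\colon \coprod_{i\in I} S^0 \to \coprod_{i\in I} 1$, where the $0$-skeleton $Y$ is a coproduct of copies of the terminal object, i.e. a discrete category on a set $V$ of objects. Since $\SmCat$ is left proper (Observation~\ref{j-t is left-proper}), Proposition~\ref{one-sided homotopy pushouts} lets me replace the one-sided homotopy pushout by the ordinary homotopy pushout, which I would then compute exactly as in the proof of Proposition~\ref{spheres}: factor each collapse map $S^0 \to 1$ through the cofibration $S^0 \to \mathbf{C}_2$ into the contractible interval groupoid $\mathbf{C}_2$, and take the strict pushout of $\coprod_i \mathbf{C}_2 \leftarrow \coprod_i S^0 \to Y$.

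This strict pushout is easy to describe. On objects it is simply $V$, since gluing the two objects of each $\mathbf{C}_2$ to the (possibly equal) pair $a_i,b_i\in V$ selected by the $i$-th attaching map introduces nothing new. On morphisms, each copy of $\mathbf{C}_2$ contributes one invertible generator $a_i\to b_i$; because every generator is invertible, every composite is invertible, so the pushout computed in $\SmCat$ is \emph{already} a groupoid, with no need to pass to a groupoid reflection, and it is visibly the free groupoid on the graph $G$ with vertex set $V$ and one edge $a_i\to b_i$ for each $i\in I$. Thus every $1$-dimensional CW-complex is, up to weak equivalence, the free groupoid on a graph.

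It remains to match free groupoids on graphs with groupoids whose automorphism groups are all free. For the forward direction I would invoke the classical computation of the fundamental group of a graph: choosing a spanning tree in each connected component of $G$, the subgroupoid it generates is contractible, so collapsing it is a categorical equivalence (a weak equivalence, by Remark~\ref{j-t special case}) identifying the free groupoid on $G$ with a disjoint union of one-object groupoids $BF_c$, where $F_c$ is the free group on the edges of that component not lying in the tree. In particular every automorphism group is free. Conversely, given a groupoid $\mathcal{G}$ all of whose automorphism groups are free, I would use the equivalence $\mathcal{G}\simeq \coprod_c BG_c$ running over connected components, with $G_c$ the automorphism group of a chosen object in component $c$; writing each $G_c$ as the free group on a set $S_c$, the groupoid $\coprod_c BG_c$ is precisely the free groupoid on the graph whose vertices are the components and whose edges are a loop at $c$ for each element of $S_c$. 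By the first half of the argument this is a $1$-dimensional CW-complex, and since $\mathcal{G}$ is weakly equivalent to it and our notion of CW-complex is defined only up to weak equivalence, $\mathcal{G}$ is itself a $1$-dimensional CW-complex.

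The main obstacle is the explicit pushout computation in the second paragraph: verifying that the strict pushout of $\coprod_i \mathbf{C}_2 \leftarrow \coprod_i S^0 \to Y$ really is a groupoid, namely the free groupoid on the attaching graph, so that the subtlety of groupoidifying a pushout of categories never arises. The remaining genuinely nontrivial input is the Nielsen--Schreier-type fact that the fundamental group of a graph is free, which I would supply by the spanning-tree contraction sketched above.
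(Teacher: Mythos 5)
Your proposal is correct and follows essentially the same route as the paper: replace the collapse maps $S^0\to 1$ by the cofibrations $S^0\to\mathbf{C}_2$, observe that the resulting strict pushout is the free groupoid on the attaching graph (hence already a groupoid with free automorphism groups), and decompose an arbitrary such groupoid into connected components to realize it as a pushout of this form. The only divergences are minor: you make explicit the spanning-tree/Nielsen--Schreier input that the paper leaves implicit in the forward direction, and in the converse you realize the skeleton $\coprod_c BG_c$ and then appeal to weak-equivalence-invariance of the notion of CW-complex, whereas the paper attaches a star-shaped graph on the full object set of each replete component so as to recover the groupoid on the nose.
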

\begin{proof}
 Clearly a $0$-complex is just a disjoint union of terminals $\coprod 1$, ie a discrete category.  A $1$-complex is then built on a $0$-complex $Z_0 := \coprod 1$ by the homotopy pushout of
 \begin{center}
  \begin{tikzcd}
   \coprod_IS^0 \arrow{d}\arrow{r} &Z_0\\
   \coprod_I1
  \end{tikzcd}
 \end{center}
 where I is an indexing set.  By replacing the vertical map by a cofibration\footnote{I.e., by factoring the left vertical map as a cofibration followed by an acyclic fibration, and using the cofibration in our pushout diagram rather than the original vertical map. There are other choices of cofibration-acyclic-fibration factorization of the vertical map other than the one shown in diagram~\eqref{pushout diag 132084}, but it is standard that making another other choice of factorization results in a weak equivalent pushout.}., we get
% \begin{center}
 \begin{equation}\label{pushout diag 132084}
  \begin{tikzcd}
   \coprod_IS^0 \arrow{d}\arrow{r} &Z_0\\
   \coprod_I\mathbf{C}_2.
  \end{tikzcd}
 \end{equation}
% \end{center}

 The pushout of this diagram simply attaches isomorphisms to our discrete $0$-complex without imposing any rules on how they must compose.  That is, each $1$-complex is %simply 
a groupoid in which each object's automorphism group is free. 

Conversely, suppose we have such a groupoid $\mathbf{G}$.  We claim it is possible to construct $\mathbf{G}$ as a $1$-complex. First, let $\mathbf{G}_i'$ be the full subcategory of $\mathbf{G}$ given by a single object $G_i$ of $\mathbf{G}$, so $\mathbf{G}_i'$ is a free group on some generating set $S$.  We let $Z_0$ be the terminal $1$ and take the pushout of the diagram
 \begin{center}
  \begin{tikzcd}
   \coprod_SS^0 \arrow{d}\arrow{r} &1\\
   \coprod_S\mathbf{C}_2
  \end{tikzcd}
 \end{center}

 This attaches to a single object a set of isomorphisms indexed by $S$ with no relations they much satisfy.  Clearly this is the free group on $S$.

 Next, suppose $\mathbf{G}_i$ is the full {\em replete} subcategory of $\mathbf{G}$ generated by the object $G_i$, that is, $\mathbf{G}_i$ is the full subcategory of $\mathbf{G}$ given by the objects isomorphic to $G_i$. Index all such objects excluding $G_i$ itself by a set $T$.  Viewing $T$ as a discrete category (a $0$-complex), let $q:  T \to 1\amalg T$ send every object to the terminal $1$, and let $\iota$ be the insertion of $T$ into $1\amalg T$.  We define a functor $r$ of discrete categories by the coproduct diagram
 \begin{center}
  \begin{tikzcd}
   T \arrow[swap]{dr}{q} \arrow[hook]{r} &\coprod_T S^0 \arrow[dashed]{d}{r} &T \arrow[hook]{l} \arrow{dl}{\iota}\\
   &1\amalg T,
  \end{tikzcd}
 \end{center}
 so that $r$ sends the second object of each copy of $S^0$ to its corresponding object under $T$, and the first always goes to the remaining terminal.  Now let $p': \coprod_S S^0 \to 1\amalg T$ be functor sending everything to $1$, and define $p: \coprod_{S\amalg T}S^0 \to 1\amalg T$ by
 \begin{center}
  \begin{tikzcd}
   \coprod_S S^0 \arrow[swap]{dr}{p'} \arrow[hook]{r} &\coprod_{S\amalg T}S^0 \arrow[dashed]{d}{p} &\coprod_T S^0 \arrow[hook]{l} \arrow{dl}{r}\\
   &1\amalg T
  \end{tikzcd}
 \end{center}

 Using this map, the pushout of
 \begin{center}
  \begin{tikzcd}
   \coprod_{S\amalg T}S^0 \arrow{d}\arrow{r}{p} &1\amalg T\\
   \coprod_{S\amalg T}\mathbf{C}_2
  \end{tikzcd}
 \end{center}
 glues isomorphisms indexed by $S$ to the same object $1$, again making the object's automorphism group free on $S$, but now also glues from each object of $T$ a single isomorphism to $1$. 
 The facts that $\mathbf{G}$ is simply a coproduct of all such isomorphism components $\mathbf{G}_i$, and that coproducts commute with pushouts complete the proof.
\end{proof}

Higher-dimensional complexes are even easier to characterize---they are exactly the groupoids!
\begin{theorem}\label{2-complexes}
 A small category is a $2$-dimensional CW-complex, under the Joyal-Tierney model structure, if and only if it is a groupoid.
\end{theorem}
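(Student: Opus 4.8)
The plan is to prove both implications by analyzing the single attaching step that builds a $2$-complex from its $1$-skeleton, using the explicit description $S^1 \simeq \mathbb{Z}$ from Proposition~\ref{spheres}.

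For the forward direction (every $2$-complex is a groupoid), I would start from a $1$-dimensional CW-complex $Y$, which is a groupoid by Theorem~\ref{1-complexes}, and analyze the pushout that attaches the $2$-cells. Since $S^1 \simeq \mathbb{Z}$ is a one-object category, the collapse map $\coprod_i S^1 \to \coprod_i 1$ is injective on objects and hence is already a cofibration; so, choosing the trivial factorization, the one-sided homotopy pushout defining the $2$-complex may be taken to be the ordinary pushout $\left(\coprod_i 1\right)\coprod_{\coprod_i S^1} Y$ (any other factorization gives a weakly equivalent result by left-properness, Observation~\ref{j-t is left-proper}). The key computation is that, because each copy of $1$ contributes only an object already present in $Y$ (the image $y_i$ of the attaching map) together with its identity morphism, this pushout introduces no new objects and no new generating morphisms; it is simply the quotient of $Y$ by the congruence generated by the relations $g_i = \mathrm{id}_{y_i}$, where $g_i \in \mathrm{Aut}_Y(y_i)$ is the image of the generator of the $i$-th circle. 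Finally I would observe that such a quotient of a groupoid is again a groupoid: for any morphism class $[h]$ one has $[h][h^{-1}] = [h h^{-1}] = [\mathrm{id}]$, and symmetrically, so inverses descend to the quotient.

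For the converse (every groupoid is a $2$-complex), I would work up to weak equivalence, which for the Joyal-Tierney structure coincides with categorical equivalence. Thus I may replace an arbitrary groupoid $\mathbf{G}$ by its skeleton, a disjoint union $\coprod_j BG_j$ of one-object groupoids indexed by the connected components, where $G_j$ is the automorphism group of a chosen object in the $j$-th component. For each $j$ I would fix a presentation $G_j = F(S_j)/\langle\langle R_j\rangle\rangle$ by generators and relations. The $1$-skeleton is then taken to be $Y = \coprod_j BF(S_j)$, which is a $1$-complex by Theorem~\ref{1-complexes} since each automorphism group $F(S_j)$ is free. Attaching one $2$-cell for each relation $r \in R_j$, via the map $S^1 \simeq \mathbb{Z} \to Y$ sending the generator to $r \in F(S_j) = \mathrm{Aut}_Y(\mathrm{pt}_j)$, imposes exactly the relations $r = \mathrm{id}$; by the computation of the forward direction this pushout is $\coprod_j B\big(F(S_j)/\langle\langle R_j\rangle\rangle\big) = \coprod_j BG_j \simeq \mathbf{G}$.

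The main obstacle I anticipate is justifying the pushout computation in the forward direction cleanly---namely that pushing a groupoid out along the collapse $S^1 \to 1$ yields an honest congruence quotient of $Y$ rather than some larger category containing formal zig-zag composites. The point to emphasize is that the non-$Y$ leg of the pushout is the terminal category $1$, which contributes no nonidentity morphisms, so no freely generated composites can arise; everything else (the passage to a skeleton, the use of group presentations, and additivity over coproducts of cells, all built on Theorem~\ref{1-complexes}) is routine bookkeeping.
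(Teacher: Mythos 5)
Your proposal is correct and follows essentially the same route as the paper: both directions rest on identifying the $2$-cell attachment as the congruence quotient collapsing chosen automorphisms to identities, and the converse builds each component from a presentation of its automorphism group, realizing the free part as a $1$-complex via Theorem~\ref{1-complexes} and attaching $2$-cells along relators. The only differences are cosmetic --- you pass to a skeleton where the paper retains the full replete subcategory on its object set $T$, and your forward direction spells out (more carefully than the paper does) why the pushout along $\coprod_i S^1 \to \coprod_i 1$ is an honest quotient of $Y$ and why such a quotient of a groupoid remains a groupoid.
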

\begin{proof}
 Taking a pushout
 \begin{center}
  \begin{tikzcd}
   \coprod_I \mathbb{Z} \arrow{d} \arrow{r} &\mathbf{G}\\
   \coprod_I 1
  \end{tikzcd}
 \end{center}
 where $I$ is an index set and $\mathbf{G}$ is a groupoid will not introduce any morphisms which are not invertible, hence every $2$-dimensional CW-complex is weakly equivalent to a groupoid, hence is a groupoid.
 
 To show the other direction, let $\mathbf{G}$ be a groupoid, and let $G_i$ be an object in $\mathbf{G}$.  Its automorphism group, like any group is a quotient of a free group on some set $S$ by some subgroup $G_S$.  Letting the set of objects isomorphic to $G_i$ be indexed by a set $T$, construct the $1$-complex $\mathbf{T}_S$ on $T$ where all pairs of objects are connected by an isomorphism and every automorphism group is free on $S$.  Let $p: \coprod_J\mathbb{Z} \to \mathbf{T}_S$ be a functor sending all objects in the domain to the same object in the codomain, while sending the generators of each copy of $\mathbb{Z}$ to a generating set of $G_S$.  The pushout of
 \begin{center}
  \begin{tikzcd}
   \coprod_J\mathbb{Z} \arrow{d} \arrow{r}{p} &\mathbf{T}_S\\
   \coprod_J 1
  \end{tikzcd}
 \end{center}
 collapses each automorphism group to that of $G_i$.  Because coproducts commute with pushouts, the result follows.
\end{proof}

Since $S^n$ is contractible for all $n>2$, we have the following:
\begin{theorem}\label{n-complexes}
 A small category is weakly equivalent to a CW-complex, under the Joyal-Tierney model structure, if and only if it is a groupoid.
\end{theorem}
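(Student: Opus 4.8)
The plan is to prove the two implications separately, combining the computation of the spheres in Proposition~\ref{spheres} with the classification of $2$-dimensional complexes in Theorem~\ref{2-complexes}. The reverse implication is immediate: if a small category is a groupoid, then by Theorem~\ref{2-complexes} it is, up to weak equivalence, a $2$-dimensional CW-complex, and in particular a CW-complex. Thus the real content is the forward implication, namely that every finite-dimensional CW-complex in $\SmCat$ is weakly equivalent to a groupoid.

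For that direction I would first isolate the key point that attaching cells of dimension greater than $2$ has no effect up to weak equivalence. Suppose $X$ is an $(n+1)$-dimensional CW-complex with $n \geq 2$, exhibited as the one-sided homotopy pushout of an attaching map $f : \coprod_{i \in I} S_i \to Y$ along $\coprod_{i} p_{S_i} : \coprod_i S_i \to \coprod_i 1$, where $Y$ is the $n$-skeleton and each $S_i$ is an $n$-sphere. Since $n > 1$, Proposition~\ref{spheres} gives $S_i \simeq 1$, so each $p_{S_i}$ is a weak equivalence; as weak equivalences in $\SmCat$ are ordinary categorical equivalences (Remark~\ref{j-t special case}) and a coproduct of equivalences of categories is again an equivalence, the map $\coprod_i p_{S_i}$ is a weak equivalence. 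By Proposition~\ref{one-sided homotopy pushouts}, $X$ is weakly equivalent to the ordinary homotopy pushout of the span $Y \leftarrow \coprod_i S_i \to \coprod_i 1$. Because one leg of this span is a weak equivalence and $\SmCat$ is left proper (Observation~\ref{j-t is left-proper}), the homotopy pushout recovers the opposite corner $Y$: factoring $\coprod_i p_{S_i}$ as an acyclic cofibration followed by an acyclic fibration and forming the strict pushout of the acyclic cofibration along $f$, the resulting map from $Y$ to this homotopy pushout is a pushout of an acyclic cofibration, hence a weak equivalence. Therefore $X \simeq Y$.

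With this observation, the theorem follows by a downward induction on dimension. Any finite-dimensional CW-complex is an $m$-dimensional CW-complex $X_m$ for some $m$, and for each $m \geq 3$ the previous step yields $X_m \simeq X_{m-1}$, its $(m-1)$-skeleton. Iterating, $X_m$ is weakly equivalent to its $2$-skeleton $X_2$, which is a $2$-dimensional CW-complex and hence a groupoid by Theorem~\ref{2-complexes}. Thus every finite-dimensional CW-complex is weakly equivalent to a groupoid. Finally, if a small category $\mathcal{C}$ is weakly equivalent to a CW-complex, it is weakly equivalent to a groupoid; since a categorical equivalence both preserves and reflects isomorphisms, being a groupoid is invariant under weak equivalence in $\SmCat$, so $\mathcal{C}$ is itself a groupoid.

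The crux of the argument, and the only step requiring genuine care, is the claim that collapsing the higher-dimensional cells is harmless up to weak equivalence. This rests on two facts that should be verified explicitly: that the coproduct $\coprod_i p_{S_i}$ of the weak equivalences $p_{S_i} : S_i \to 1$ remains a weak equivalence, and that a homotopy pushout along a weak equivalence returns the opposite corner---precisely the place where left-properness of the Joyal-Tierney structure is invoked. Once these are in hand, the remaining induction and the invariance of the groupoid property under weak equivalence are entirely routine.
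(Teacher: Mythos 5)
Your proof is correct and follows exactly the route the paper intends: the paper gives no written proof beyond the remark that the higher spheres are contractible, and your argument simply fills in the details of that observation (contractibility of $S^n$ for $n>1$ from Proposition~\ref{spheres}, collapse of the higher attaching maps via left-properness and Proposition~\ref{one-sided homotopy pushouts}, downward induction to the $2$-skeleton, and Theorem~\ref{2-complexes}). The only cosmetic discrepancy is that the paper's preamble says ``contractible for all $n>2$'' where Proposition~\ref{spheres} actually gives $n>1$, and your version uses the correct bound.
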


\begin{corollary}\label{main cor on cw-cplxs 1} 
The category $\CW \SmCat$ of CW-complexes in small categories is the full subcategory of $\SmCat$ generated by the groupoids.
\end{corollary}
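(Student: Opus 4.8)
The plan is to reduce the corollary to the two preceding theorems by identifying the object class of $\CW\SmCat$ with the class of groupoids, and then using the fact that a full subcategory of $\SmCat$ is completely determined by its objects. Since $\CW\SmCat$ is, by construction, the full subcategory of $\SmCat$ whose objects are the (finite-dimensional) CW-complexes, and the full subcategory ``generated by the groupoids'' is understood as the full subcategory whose objects are exactly the groupoids, it suffices to prove the object-level statement: a small category is a CW-complex if and only if it is a groupoid.

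First I would dispatch the ``only if'' direction, that every CW-complex is a groupoid, which falls out of Theorem~\ref{n-complexes} with essentially no extra argument. If $X$ is a CW-complex, then $X$ is weakly equivalent to a CW-complex (via the identity functor on $X$), so the forward implication of Theorem~\ref{n-complexes} already forces $X$ to be a groupoid. It is worth recording the reason this is legitimate on the nose rather than merely up to weak equivalence: the property of being a groupoid is invariant under the weak equivalences of the Joyal-Tierney structure, since those are exactly the categorical equivalences, and an equivalence of categories both preserves and reflects isomorphisms, so any category equivalent to a groupoid is itself a groupoid.

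For the ``if'' direction, that every groupoid is a CW-complex, I would invoke Theorem~\ref{2-complexes} directly: a small category is a $2$-dimensional CW-complex precisely when it is a groupoid, so in particular every groupoid is a $2$-dimensional, hence finite-dimensional, CW-complex. Combining the two directions shows that the object class of $\CW\SmCat$ is exactly the class of groupoids.

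Finally I would conclude the identification of categories. Both $\CW\SmCat$ and the full subcategory of $\SmCat$ spanned by the groupoids are full subcategories of $\SmCat$, so each is determined by its class of objects together with the rule that its hom-sets are the full hom-sets inherited from $\SmCat$; having matched the object classes, the two full subcategories coincide, which is precisely the assertion. I do not anticipate a genuine obstacle, as all the substantive content already lives in Theorems~\ref{n-complexes} and~\ref{2-complexes}; the only point requiring care is the bookkeeping distinction between a CW-complex as an actual object and its weak equivalence class, together with the weak-equivalence invariance of the groupoid property, which is exactly what licenses reading Theorem~\ref{n-complexes} as an on-the-nose statement about CW-complexes rather than about their homotopy types.
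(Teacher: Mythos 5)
Your proposal is correct and matches the paper's (implicit) argument: the paper states this corollary without proof, treating it as an immediate consequence of Theorems~\ref{2-complexes} and~\ref{n-complexes}, which is exactly the reduction you carry out. Your added remark that being a groupoid is invariant under categorical equivalence is the right point to make explicit, given that the paper's notion of CW-complex is only defined up to weak equivalence.
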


\begin{theorem}\label{main thm on cw-cplxs}
 An internal category in $\Sh(X)$ for a connected space $X$ is weakly equivalent to a %finite-dimensional (THIS ASSUMPTION SEEMS NO LONGER USED?)
CW-complex, under the Joyal-Tierney model structure, if and only if it is weakly equivalent to a sheafification of a constant groupoid presheaf.
\end{theorem}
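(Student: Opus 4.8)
The plan is to obtain this theorem by assembling results already in hand: Theorem~\ref{connected space}, which carries out the substantive ``descent'' of a CW-complex in $\Cat\Sh(X)$ down to a CW-complex in $\SmCat$, together with the characterization of CW-complexes in $\SmCat$ as exactly the groupoids (Theorem~\ref{n-complexes}, Theorem~\ref{2-complexes}, and Corollary~\ref{main cor on cw-cplxs 1}). I would prove the two implications separately. Throughout I would keep in mind that ``CW-complex'' means ``finite-dimensional CW-complex,'' and that, by the definition of $\tilde{c}$ (Lemma~\ref{constantification is a left adjoint} and Definition~\ref{def of c}), a ``sheafification of a constant groupoid presheaf'' is precisely an object of the form $(\tilde{\#}\circ\tilde{c})(\mathbf{G})$ for $\mathbf{G}$ an ordinary groupoid.

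For the forward implication, suppose $Z$ is weakly equivalent to a CW-complex in $\Cat\Sh(X)$; without loss of generality $Z$ itself is an $n$-dimensional CW-complex for some $n$. Then Theorem~\ref{connected space} supplies a weak equivalence $Z \to (\tilde{\#}\circ\tilde{c})(\overline{Z})$ with $\overline{Z}$ an $n$-dimensional CW-complex in $\SmCat$, and Theorem~\ref{n-complexes} forces $\overline{Z}$ to be a groupoid. Hence $\tilde{c}(\overline{Z})$ is a constant groupoid presheaf and $Z$ is weakly equivalent to its sheafification, as required. This direction is essentially immediate once Theorem~\ref{connected space} is available.

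For the reverse implication, suppose $Z$ is weakly equivalent to $(\tilde{\#}\circ\tilde{c})(\mathbf{G})$ for a groupoid $\mathbf{G}$. By Theorem~\ref{2-complexes}, $\mathbf{G}$ is a $2$-dimensional CW-complex in $\SmCat$, so it suffices to show that $\tilde{\#}\circ\tilde{c}$ sends $n$-dimensional CW-complexes in $\SmCat$ to objects weakly equivalent to $n$-dimensional CW-complexes in $\Cat\Sh(X)$. I would argue this by induction on $n$, reusing the preservation properties collected in Theorem~\ref{spheres to spheres}. In the base case, $\tilde{\#}\circ\tilde{c}$ preserves the terminal object and coproducts, so it carries a $0$-dimensional CW-complex to one. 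For the inductive step, I would first invoke left-properness (Observation~\ref{j-t is left-proper}) and Proposition~\ref{one-sided homotopy pushouts} to replace the one-sided homotopy pushout defining an $n$-dimensional CW-complex with the ordinary homotopy pushout; then use that $\tilde{\#}\circ\tilde{c}$ preserves homotopy pushouts up to weak equivalence, sends spheres to spheres, and commutes with coproducts and with the maps $p_{S_i}$, to identify the image as a homotopy pushout of a coproduct of spheres along the corresponding skeleton inclusion over $(\tilde{\#}\circ\tilde{c})(Y)$, which is weakly equivalent to an $(n-1)$-dimensional CW-complex by the inductive hypothesis. A final application of Proposition~\ref{one-sided homotopy pushouts} recovers an $n$-dimensional CW-complex in $\Cat\Sh(X)$ up to weak equivalence.

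The genuine content of the theorem is wholly carried by Theorem~\ref{connected space}; the remaining work is bookkeeping. The one point that requires care, and which I expect to be the main (mild) obstacle, is that CW-structure is defined via \emph{one-sided} homotopy pushouts, which are not literally preserved by $\tilde{\#}\circ\tilde{c}$. The passage back and forth through ordinary homotopy pushouts, justified by left-properness via Proposition~\ref{one-sided homotopy pushouts}, is exactly what makes the inductive argument in the reverse implication go through; everything else is a direct citation of the preservation statements in Theorem~\ref{spheres to spheres}.
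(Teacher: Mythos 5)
Your proposal is correct and follows the same route as the paper, whose entire proof is the citation ``By Theorems~\ref{connected space} and~\ref{n-complexes}.'' The only difference is that you spell out the reverse implication (that $\tilde{\#}\circ\tilde{c}$ carries CW-complexes in $\SmCat$ to objects weakly equivalent to CW-complexes in $\Cat\Sh(X)$, via induction using Theorem~\ref{spheres to spheres} and Proposition~\ref{one-sided homotopy pushouts}), which the paper leaves implicit; your handling of the one-sided versus ordinary homotopy pushout issue is exactly the right bookkeeping.
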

\begin{proof}
By Theorems~\ref{connected space} and~\ref{n-complexes}.
\end{proof}

\begin{corollary}\label{main cor on cw-cplxs 2}
Let $X$ be a connected topological space. The homotopy category $\Ho \left( \CW\Cat\Sh(X)\right)$ of CW-complexes in internal categories in the category of sheaves on $X$ is equivalent to the homotopy category of groupoids.
\end{corollary}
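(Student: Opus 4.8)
The plan is to deduce this statement by composing two equivalences of homotopy categories that have already been established. First I would invoke Theorem~\ref{connected space corollary}, which provides an equivalence $\Ho(\CW\Cat\Sh(X)) \simeq \Ho(\CW\SmCat)$ between the homotopy category of CW-complexes in $\Cat\Sh(X)$ and the homotopy category of CW-complexes in $\SmCat$. This reduces the problem to identifying $\Ho(\CW\SmCat)$ with the homotopy category of groupoids.

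Next I would apply Corollary~\ref{main cor on cw-cplxs 1}, which asserts that $\CW\SmCat$ is precisely the full subcategory of $\SmCat$ generated by the groupoids. By Remark~\ref{j-t special case}, the weak equivalences in the Joyal-Tierney model structure on $\SmCat$ are the functors inducing categorical equivalences, and these restrict on the full subcategory of groupoids to the categorical equivalences between groupoids. Consequently, forming the homotopy category of $\CW\SmCat$---that is, localizing the category of groupoids at the categorical equivalences---yields exactly the homotopy category of groupoids in the usual sense.

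Composing the equivalence of Theorem~\ref{connected space corollary} with this identification gives the desired equivalence between $\Ho(\CW\Cat\Sh(X))$ and the homotopy category of groupoids. I expect no real obstacle here: the argument is entirely formal, and the only point requiring minor care is confirming that the weak equivalences on the full subcategory $\CW\SmCat$ agree with the standard notion of weak equivalence between groupoids, which is immediate from the description of the Joyal-Tierney weak equivalences recalled in Remark~\ref{j-t special case}.
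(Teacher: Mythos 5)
Your proposal is correct and follows exactly the paper's own argument, which deduces the corollary immediately from Theorem~\ref{connected space corollary} and Corollary~\ref{main cor on cw-cplxs 1}. The extra remark you include---that the Joyal-Tierney weak equivalences restrict on the full subcategory of groupoids to the usual categorical equivalences of groupoids---is a point the paper leaves implicit, and it is a reasonable thing to spell out.
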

\begin{proof}
Immediate from Theorem~\ref{connected space corollary} and Corollary~\ref{main cor on cw-cplxs 1}.
\end{proof}

\section{Algebraic $K$-theory.}

\subsection{Generalities on algebraic $K$-theory.}

Let $\mathcal{C},\mathcal{D}$ be Waldhausen categories, that is,
$\mathcal{C}$ and $\mathcal{D}$ are ``categories with cofibrations and weak equivalences''
in the sense of~\cite{MR802796}.
Let $F,G,H: \mathcal{C}\rightarrow\mathcal{D}$ be exact functors,
that is,
$F$ is a functor which sends the zero object to the zero object, sends cofibrations to cofibrations, sends weak equivalences to weak equivalences, and sends pushouts along cofibrations to pushouts, and similarly for $G$ and $H$. 
Suppose that we have natural transformations $F\rightarrow G$ and $G\rightarrow H$ such that, for each object $X$ of $\mathcal{C}$,
the resulting sequence
\[ F(X) \rightarrow G(X) \rightarrow H(X)\]
is a cofiber sequence in $\mathcal{C}$.
Finally, suppose that, for every cofibration
$X \rightarrow Y$ in $\mathcal{C}$, the resulting 
map $G(X) \coprod_{F(X)} F(Y)\rightarrow G(Y)$ is also a cofibration.
Then we have Waldhausen's famous additivity theorem, from~\cite{MR802796}:
\begin{theorem} {\bf (Waldhausen's additivity theorem.)}\label{additivity thm}
The functions
$\left| wS.G\right|$ and $\left| wS.( F\vee H)\right|$,
from $\left|wS.\mathcal{C}\right|$ to $\left|wS.\mathcal{D}\right|$, are homotopic.\end{theorem}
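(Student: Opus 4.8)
The plan is to deduce the statement from the \emph{universal} case of additivity, namely the corresponding assertion about the extension category, and then to invoke Waldhausen's theorem in that form (from~\cite{MR802796}). First I would assemble the given cofiber sequence of exact functors $F \to G \to H$ into a single exact functor. Let $\mathcal{E}(\mathcal{D})$ denote the Waldhausen category of cofiber sequences $A \hookrightarrow B \to C$ in $\mathcal{D}$, equipped with the three exact ``projection'' functors $s, t, q : \mathcal{E}(\mathcal{D}) \to \mathcal{D}$ sending such a sequence to $A$, $B$, and $C$ respectively. The hypotheses on $F,G,H$---that $F(X)\to G(X)\to H(X)$ is a cofiber sequence naturally in $X$, and that $G(X)\coprod_{F(X)}F(Y) \to G(Y)$ is a cofibration for every cofibration $X\to Y$---are exactly what is needed to guarantee that $\Phi : \mathcal{C} \to \mathcal{E}(\mathcal{D})$, $X \mapsto \bigl( F(X) \hookrightarrow G(X) \to H(X)\bigr)$, is a well-defined exact functor. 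Since $t\circ\Phi = G$ and $(s\vee q)\circ\Phi = F\vee H$, it suffices to prove that the two induced maps $\left| wS_\bullet t\right|$ and $\left| wS_\bullet(s\vee q)\right|$ from $\left| wS_\bullet\mathcal{E}(\mathcal{D})\right|$ to $\left| wS_\bullet\mathcal{D}\right|$ are homotopic, and then to precompose with $\left| wS_\bullet\Phi\right|$.

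The core of the argument---and the part I expect to be the main obstacle---is the homotopy-theoretic heart of Waldhausen's theorem: that the exact functor $(s,q) : \mathcal{E}(\mathcal{D}) \to \mathcal{D}\times\mathcal{D}$ induces a homotopy equivalence $\left| wS_\bullet\mathcal{E}(\mathcal{D})\right| \xrightarrow{\ \simeq\ } \left| wS_\bullet\mathcal{D}\right| \times \left| wS_\bullet\mathcal{D}\right|$. A homotopy inverse is induced by the functor $\mathcal{D}\times\mathcal{D} \to \mathcal{E}(\mathcal{D})$ sending $(A,C)$ to the split sequence $A \hookrightarrow A\vee C \to C$; one composite is the identity on the nose, while the other sends a general cofiber sequence to its ``split replacement.'' Showing that this split-replacement endofunctor is homotopic to the identity on $\left| wS_\bullet\mathcal{E}(\mathcal{D})\right|$ is where the real work lies: following Waldhausen (or the streamlined simplicial-homotopy arguments of McCarthy and Grayson), I would analyze the bisimplicial set $wS_\bullet\mathcal{E}(\mathcal{D})$ levelwise in the $S_\bullet$-direction and construct an explicit simplicial homotopy interpolating between a cofiber sequence and its split form, using functoriality of the $S_\bullet$-construction together with the pushout axioms of a Waldhausen category to keep every intermediate comparison map a cofibration and every intermediate object within the class of exact functors.

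Granting the equivalence $(s,q)$, I would finish by converting the additive statement about the three maps into the wedge statement. The $S_\bullet$-construction endows $\left| wS_\bullet\mathcal{D}\right|$ with the structure of an $H$-space (indeed an infinite loop space) whose addition is modeled on the coproduct $\vee$ in $\mathcal{D}$; the equivalence $(s,q)$ together with the splitting then yields, in the group of homotopy classes of maps $\left| wS_\bullet\mathcal{E}(\mathcal{D})\right| \to \left| wS_\bullet\mathcal{D}\right|$, the relation $t_* = s_* + q_*$. On the other hand, because the $H$-space addition is induced by coproduct, the map induced by the exact functor $s\vee q$ satisfies $(s\vee q)_* = s_* + q_*$ as well. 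Hence $t_* = (s\vee q)_*$, i.e.\ $\left| wS_\bullet t\right|\simeq\left| wS_\bullet(s\vee q)\right|$; precomposing with $\left| wS_\bullet\Phi\right|$ gives $\left| wS_\bullet G\right| \simeq \left| wS_\bullet(F\vee H)\right|$, as desired.

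As indicated, the genuinely hard step is the middle one---the construction of the simplicial homotopy exhibiting the split-replacement as homotopic to the identity on $\left| wS_\bullet\mathcal{E}(\mathcal{D})\right|$---since this is the point at which one cannot merely manipulate adjunctions or universal properties formally, but must engage directly with the combinatorics of the $wS_\bullet$ bisimplicial set. In the present paper this theorem is quoted from~\cite{MR802796} and used as a black box; the outline above records the shape of the proof one would reconstruct if needed.
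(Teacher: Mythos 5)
The paper does not prove this statement at all: Theorem~\ref{additivity thm} is quoted verbatim from Waldhausen \cite{MR802796} and used as a black box, as you yourself note at the end of your sketch. Your outline is a faithful reconstruction of the standard argument --- reduce to the universal case over the extension category $\mathcal{E}(\mathcal{D})$ via the exact functor $\Phi$ (whose exactness is exactly what the stated hypotheses on $F,G,H$ guarantee), prove that $(s,q)$ induces an equivalence on $\left| wS_\bullet(-)\right|$, and convert $t_*=s_*+q_*=(s\vee q)_*$ using the $H$-space structure --- and you correctly isolate the equivalence induced by $(s,q)$ (Waldhausen's Theorem 1.4.2, or the McCarthy--Grayson simplicial homotopy) as the genuinely hard step. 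Since the paper offers no proof to compare against, there is nothing further to reconcile; your sketch is accurate and appropriately defers the core homotopy-theoretic input to the literature.
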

Consequently the maps induced in $K$-theory groups,
$K_*(G)$ and $K_*(F\vee H)$, are equal.
(Waldhausen gives several equivalent versions of the additivity
theorem in~\cite{MR802796}; the version stated in Theorem~\ref{additivity thm} is simply the version which is most convenient for the purposes of the present paper.)

There is another result from Waldhausen's paper~\cite{MR802796} which we will use. This is Proposition~1.3.1 from that paper:
\begin{prop} \label{waldhausens homotopies}
Let $\mathcal{C},\mathcal{D}$ be Waldhausen categories,
and let $F,G: \mathcal{C}\rightarrow\mathcal{D}$
be exact functors such that, for every object $X$ of $\mathcal{C}$,
the induced map $F(X) \rightarrow G(X)$ is a weak equivalence in $\mathcal{D}$. Then the
functions $\left| wS.F\right|$ and $\left| wS.G\right|$,
from $\left| wS.\mathcal{C} \right|$ to $\left| wS.\mathcal{D} \right|$, are homotopic.
\end{prop}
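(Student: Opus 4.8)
The plan is to produce the homotopy directly from the natural transformation $\eta\colon F\rightarrow G$ — which is implicit in the phrase ``the induced map $F(X)\rightarrow G(X)$'' — by invoking the standard principle that a natural transformation between two functors induces a homotopy between the induced maps on classifying spaces, and applying this principle levelwise in the $S.$-direction of Waldhausen's construction. Everything reduces to checking that $\eta$ gives, at each simplicial level $n$, a natural transformation of functors between the categories of weak equivalences $wS_n\mathcal{C}\rightarrow wS_n\mathcal{D}$.

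First I would recall that the $S.$-construction is functorial in exact functors: an exact $F\colon\mathcal{C}\rightarrow\mathcal{D}$ induces functors $S_nF\colon S_n\mathcal{C}\rightarrow S_n\mathcal{D}$ for each $n$, compatibly with the simplicial structure maps, and restricting to functors $wS_nF$ on the subcategories of weak equivalences. The same construction applied to $\eta$ yields, for each $n$, a natural transformation $S_n\eta\colon S_nF\Rightarrow S_nG$ whose component at an object $A=(A_0\rightarrow A_1\rightarrow\dots\rightarrow A_n)$ of cofibrations (together with its chosen subquotients $A_{ij}$) is the family $\eta_{A_{ij}}\colon F(A_{ij})\rightarrow G(A_{ij})$. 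The key observation is that, because $\eta$ is objectwise a weak equivalence, every component $\eta_{A_{ij}}$ is a weak equivalence in $\mathcal{D}$; and since a morphism in $wS_n\mathcal{D}$ is precisely a morphism of subquotient-diagrams all of whose components are weak equivalences, the transformation $S_n\eta$ factors through $wS_n\mathcal{D}$, i.e., it is a natural transformation $wS_nF\Rightarrow wS_nG$.

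Next I would use the formal fact that a natural transformation $\alpha\colon P\Rightarrow Q$ between functors $P,Q\colon\mathcal{A}\rightarrow\mathcal{B}$ is the same datum as a functor $\mathcal{A}\times\mathcal{I}\rightarrow\mathcal{B}$ out of the product with the arrow category $\mathcal{I}=\{0\rightarrow 1\}$, restricting to $P$ and $Q$ on the two copies of $\mathcal{A}$. Since the nerve sends products to products and $N\mathcal{I}=\Delta^1$, taking nerves produces a homotopy $N\mathcal{A}\times\Delta^1\rightarrow N\mathcal{B}$ from $NP$ to $NQ$. Applying this with $P=wS_nF$, $Q=wS_nG$, and $\alpha=S_n\eta$, and treating $\mathcal{I}$ as constant in the $S.$-direction, the level-$n$ homotopies assemble — with no further compatibility to check, as $\mathcal{I}$ contributes only the constant factor — into a map of bisimplicial sets $N_\bullet(wS.\mathcal{C})\times\Delta^1\rightarrow N_\bullet(wS.\mathcal{D})$. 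Realizing and using that geometric realization carries this constant $\Delta^1$-factor to the interval $|\Delta^1|$ gives the desired homotopy $|wS.\mathcal{C}|\times\Delta^1\rightarrow|wS.\mathcal{D}|$ between $|wS.F|$ and $|wS.G|$.

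I expect the main obstacle to be the verification in the second paragraph: that $S_n\eta$ genuinely lands in the weak equivalences $wS_n\mathcal{D}$, which hinges on $\eta$ being compatible with the chosen subquotients $A_{ij}$ of the $S.$-construction. This is exactly where exactness of $F$ and $G$ enters — they preserve the cofiber squares defining the $A_{ij}$, so that $\eta$ induces well-defined maps on subquotients, and these are weak equivalences by the objectwise hypothesis. Everything outside this point is purely formal.
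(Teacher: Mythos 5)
Your argument is correct, and it is essentially the standard one. Note that the paper does not prove this statement at all: it is quoted verbatim as Proposition~1.3.1 of Waldhausen's paper and used as a black box, and the proof you give (the natural weak equivalence $\eta\colon F\Rightarrow G$ induces, levelwise in the $S.$-direction, natural transformations $wS_nF\Rightarrow wS_nG$ valued in the subcategories of weak equivalences, hence functors $wS_n\mathcal{C}\times\mathcal{I}\rightarrow wS_n\mathcal{D}$ whose nerves assemble into a bisimplicial homotopy) is precisely Waldhausen's own, so there is nothing to compare beyond observing that your identification of the one genuine checkpoint --- that exactness of $F$ and $G$ makes $\eta$ act compatibly on the chosen subquotients $A_{ij}$, with each component a weak equivalence --- is exactly where the content lies.
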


\begin{definition}
Let $\mathcal{C}$ be a Waldhausen category. We will say that an object $X$ of $\mathcal{C}$ is {\em contractible} if the unique map
$0 \rightarrow X$ is a weak equivalence.
\end{definition}

\begin{definition}\label{def of cone functor}
Let $\mathcal{C}$ be a Waldhausen category.
We will say that an exact functor $P: \mathcal{C}\rightarrow \mathcal{C}$ equipped with a natural transformation $\eta: \id_{\mathcal{C}}\rightarrow P$ is a {\em cone functor}
if:
\begin{itemize}
\item for all objects $X$ of $\mathcal{C}$, $PX$ is contractible, 
\item for all objects $X$ of $\mathcal{C}$, the map $\eta(X) : X \rightarrow PX$ is a cofibration, and
\item for every cofibration $X \rightarrow Y$ in $\mathcal{C}$, the map
$Y\coprod_{X}PX \rightarrow PY$ is a cofibration.
\end{itemize}

If $\mathcal{C}$ is a Waldhausen category and $(P, \eta)$ is a cone functor on $\mathcal{C}$,
then we will write $\Sigma: \mathcal{C} \rightarrow \mathcal{C}$ 
for the functor given by $\Sigma(X) = PX \coprod_X 0$,
i.e., $\Sigma(X)$ is the pushout:
\[\xymatrix{
 X \ar[r]^{\eta(X)} \ar[d] & PX \ar[d] \\ 0 \ar[r] & \Sigma(X) .}\]
When $n$ is a positive integer, we will write $\Sigma^n$
for the $n$-fold composite of $\Sigma$ with itself.
\end{definition}

\begin{comment}
\begin{example}
Suppose that $\mathcal{A}$ is a cofibrantly generated 
pointed model category.
Then Quillen's small object argument produces functorial factorization systems on $\mathcal{A}$ (see~\cite{MR0223432} for the original reference, or~\cite{MR2506256} for a modern treatment), and in particular, for
each object $X$ in $\mathcal{A}$, the small object argument
functorially constructs an object $PX$ and a cofibration $\eta(X): X \rightarrow PX$ such that the unique map $PX \rightarrow 0$ is an acyclic fibration. One can then show that $P$ preserves weak equivalences, but it is still not clear that $P$ satisfies the other axioms for being a cone functor.
In particular, it is not clear that $P$ preserves cofibrations, or that $P$ preserves pushouts along a cofibration; this question probably deserves some attention from someone who is very experienced in the use of the small object argument. (THIS HAS GOT TO BE KNOWN ALREADY!)

Under more concrete circumstances, we can often construct (without using the small object argument) a specific 
endofunctor $P$ on a model category which {\em does} satisfy the axioms for a cone functor. We do this, below, in Proposition~\ref{existence of cone functor}; a different kind of example (in which the ambient category is abelian, rather than the category of internal categories in a Grothendieck topos) can be found in~\cite{g-theory1}.
\end{comment}

It is standard that, given a pointed model category, the full subcategory generated by its cofibrant objects forms a Waldhausen category.
%Although the authors do not know where to find a proof of this fact in literature, it is, of course, true: it is well-known that, if $\mathcal{A}$ is a pointed model category, then the full subcategory of $\mathcal{A}$ generated by the  cofibrant objects is a Waldhausen category. 
The one nontrivial part of the proof of this fact is checking
Waldhausen's condition ``{\bf Weq 2}'' from~\cite{MR802796},
which
reads as follows: if we have a commutative diagram
\[\xymatrix{ 
 B \ar[d] & A \ar[l]^f\ar[r]\ar[d] & C \ar[d] \\
 B^{\prime} & A^{\prime} \ar[l]^g\ar[r] & C^{\prime} }\]
in $\mathcal{C}$ in which $f$ and $g$ are cofibrations
and all three vertical arrows are weak equivalences,
then the induced map
\[ B\coprod_A C \rightarrow B^{\prime} \coprod_{A^{\prime}} C^{\prime}\]
is a weak equivalence.
If $\mathcal{A}$ is a pointed model category and
$\mathcal{C}$ is the full subcategory of $\mathcal{A}$ generated
by the cofibrant objects, then condition {\bf Weq 2} holds in $\mathcal{C}$ by Lemma~5.2.6 in M. Hovey's book~\cite{MR1650134} (Hovey, in turn, writes that he learned the result from Dwyer, Hirschhorn, and Kan); the same result also appears as the corollary following Theorem~B in~\cite{reedy}.

Now here is an easy consequence of Theorem~\ref{additivity thm} and Proposition~\ref{waldhausens homotopies}:
\begin{prop}\label{k-thy vanishing thm}
Let $\mathcal{C}$ be a Waldhausen category.
Suppose that there exists a cone functor on $\mathcal{C}$
and a positive integer $n$ such that, for all objects $X$
of $\mathcal{C}$, $\Sigma^n(X)$ is contractible.
Then the space $\left| wS.\mathcal{C}\right|$ is contractible.
Consequently the $K$-theory groups $K_n(\mathcal{C})$ are trivial for all $n\geq 0$.
\end{prop}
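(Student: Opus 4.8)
The plan is to exploit the cofiber sequence of endofunctors $\id_{\mathcal{C}} \to P \to \Sigma$ supplied by the cone functor, together with Waldhausen's additivity theorem, and then to read off the consequences on the homotopy groups of $\left| wS.\mathcal{C}\right|$. First I would record that $\Sigma$ is an exact functor: this follows from the three axioms in Definition~\ref{def of cone functor}, where in particular the third axiom (that $Y\coprod_X PX \to PY$ is a cofibration) is exactly what guarantees that $\Sigma$ preserves cofibrations and, more importantly, that the triple $(F,G,H) = (\id_{\mathcal{C}}, P, \Sigma)$ satisfies the hypotheses of Theorem~\ref{additivity thm}. Indeed, the cofiber sequence $X \to PX \to \Sigma X$ is the defining pushout for $\Sigma X$, and the required cofibration condition $G(X)\coprod_{F(X)} F(Y) \to G(Y)$ is precisely the third cone-functor axiom.

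Applying Theorem~\ref{additivity thm} then yields that $\left| wS.P\right|$ is homotopic to $\left| wS.(\id_{\mathcal{C}}\vee \Sigma)\right|$ as self-maps of the infinite loop space $\left| wS.\mathcal{C}\right|$. Since the coproduct functor $\id_{\mathcal{C}}\vee\Sigma$ induces the $H$-space sum on homotopy groups, this gives, on each homotopy group, the identity $\id + \left(\left| wS.\Sigma\right|\right)_* = \left(\left| wS.P\right|\right)_*$. Next I would use Proposition~\ref{waldhausens homotopies}: because $PX$ is contractible for every $X$ (the first cone axiom), the unique natural transformation from the constant zero functor to $P$ is an objectwise weak equivalence, so $\left| wS.P\right|$ is homotopic to $\left| wS.0\right|$, the constant map. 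Hence $\left(\left| wS.P\right|\right)_* = 0$, and we conclude $\left(\left| wS.\Sigma\right|\right)_* = -\id$ on every homotopy group $\pi_k\left(\left| wS.\mathcal{C}\right|\right)$.

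Now I would bring in the hypothesis on $\Sigma^n$. The same argument via Proposition~\ref{waldhausens homotopies}, applied to the constant zero functor and $\Sigma^n$ (whose values are contractible by assumption), shows that $\left| wS.\Sigma^n\right|$ is null-homotopic, so $\left(\left| wS.\Sigma^n\right|\right)_* = 0$. On the other hand, since $\Sigma^n$ is the $n$-fold composite of $\Sigma$ and $\left| wS.(-)\right|$ is functorial, $\left| wS.\Sigma^n\right| = \left| wS.\Sigma\right|^{\circ n}$, and therefore on homotopy groups $\left(\left| wS.\Sigma^n\right|\right)_* = \left(\left(\left| wS.\Sigma\right|\right)_*\right)^n = (-\id)^n = (-1)^n\id$. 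Combining the two computations gives $(-1)^n\id = 0$ as an endomorphism of $\pi_k\left(\left| wS.\mathcal{C}\right|\right)$; since $(-1)^n = \pm 1$ acts invertibly on any abelian group, this forces $\pi_k\left(\left| wS.\mathcal{C}\right|\right) = 0$ for all $k \geq 1$, while connectivity of $\left| wS.\mathcal{C}\right|$ handles $k = 0$.

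Finally, since $\left| wS.\mathcal{C}\right|$ has the homotopy type of a CW-complex and all of its homotopy groups vanish, Whitehead's theorem makes it contractible; then $K_i(\mathcal{C}) = \pi_i\left(\Omega\left| wS.\mathcal{C}\right|\right) = \pi_{i+1}\left(\left| wS.\mathcal{C}\right|\right) = 0$ for all $i \geq 0$. The step I expect to require the most care is the identification of the additivity output with the $H$-space sum on homotopy groups---i.e.\ justifying that the coproduct functor $\id_{\mathcal{C}}\vee\Sigma$ induces $\id + \left(\left| wS.\Sigma\right|\right)_*$---together with the verification that $\Sigma$ is genuinely exact so that Theorem~\ref{additivity thm} is applicable; everything after that is formal manipulation on homotopy groups.
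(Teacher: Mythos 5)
Your argument is correct, but it finishes by a genuinely different route than the paper's. Both proofs begin identically: feed the cofibration sequence of exact functors $\id_{\mathcal{C}}\rightarrow P\rightarrow \Sigma$ into Theorem~\ref{additivity thm} (noting, as you do, that the third cone-functor axiom of Definition~\ref{def of cone functor} is verbatim the cofibration hypothesis of the additivity theorem), and use Proposition~\ref{waldhausens homotopies} to discard the contractible-valued functors. The paper, however, never passes to homotopy groups: it runs additivity on the two cofiber sequences $F\rightarrow PF\rightarrow\Sigma F$ and $\Sigma F\rightarrow P\Sigma F\rightarrow\Sigma^2 F$ together, obtaining homotopies $PF\vee\Sigma^2 F\simeq F\vee\Sigma F\vee\Sigma^2 F\simeq F\vee P\Sigma F$ and hence $\Sigma^2 F\simeq F$ as self-maps of $\left| wS.\mathcal{C}\right|$; iterating gives $\id\simeq\Sigma^{2k}$, and choosing $2k\geq n$ makes the identity map nullhomotopic outright, with no appeal to the $H$-space structure or to Whitehead's theorem. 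Your route uses a single cofiber sequence plus the standard fact that $\vee$ of exact functors induces the $H$-space sum on $\pi_*\left(\left| wS.\mathcal{C}\right|\right)$ (this is Waldhausen's Proposition~1.3.2, and is the extra input you correctly flag as the delicate step), deduces $\left(\left| wS.\Sigma\right|\right)_*=-\id$, and then invokes invertibility of $(-1)^n$ together with connectivity and Whitehead's theorem. What your version buys is a cleaner endgame with no parity bookkeeping, since $(-1)^n\cdot\id=0$ kills the homotopy groups whether $n$ is even or odd; what the paper's two-cofiber-sequence trick buys is a proof that ``$[\Sigma]$ squares to the identity'' entirely at the level of homotopies of maps, so that contractibility of $\left| wS.\mathcal{C}\right|$ falls out directly from nullhomotopy of the identity rather than from vanishing of homotopy groups.
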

\begin{proof}
This line of argument is quite standard.
If $F: \mathcal{C}\rightarrow \mathcal{C}$ is an exact functor,
then we have the cofiber sequences of exact functors
\begin{align*}
 F &\rightarrow PF \rightarrow \Sigma F,\\
 \Sigma F &\rightarrow P\Sigma F \rightarrow \Sigma^2 F,\end{align*}
and then Theorem~\ref{additivity thm} yields 
the homotopies
\begin{align*}
 PF \vee \Sigma^2 F 
  &\simeq F \vee \Sigma F \vee \Sigma^2 F \\
  &\simeq F \vee P\Sigma F.
\end{align*}
Since $P\Sigma F(X)$ and $PF(X)$ are contractible for all $X$,
Proposition~\ref{waldhausens homotopies} gives us further homotopies:
\begin{align*}
 \Sigma^2 F &\simeq  PF \vee \Sigma^2 F \\
  &\simeq F \vee P\Sigma F \\
  &\simeq F.
\end{align*}
Now we apply the homotopy $\Sigma^2 F \simeq F$ repeatedly, letting $F = \Sigma^{2i}$ for $i=0, 2, \dots $,
to get homotopies
\begin{equation}\label{chain of homotopies} \id \simeq \Sigma^2 \simeq \Sigma^4 \simeq \Sigma^6 \simeq \dots .\end{equation}
By assumption, there exists some $n$ such that $\Sigma^n(X)$ is contractible for all objects $X$ of $\mathcal{C}$,
so let $m$ be any even integer greater than or equal to $n$, and now $\Sigma^m$ is homotopic to the constant basepoint-valued 
endomorphism of $\left| wS.\mathcal{C}\right|$, by Proposition~\ref{waldhausens homotopies}.
So $\Sigma^m$ is nulhomotopic, so the identity map of 
$\left| wS.\mathcal{C}\right|$ is nulhomotopic
due to the chain of homotopies~\eqref{chain of homotopies}.
Since the identity map of $\left| wS.\mathcal{C}\right|$ is nulhomotopic, $\left| wS.\mathcal{C}\right|$ is contractible as claimed.
\end{proof}
While Proposition~\ref{k-thy vanishing thm} is easy to prove (once one knows Waldhausen's additivity theorem), our original proof
used a bi-exact product on $\mathcal{C}$, and multiplicative structure on the $K$-theory space to show its contractibility; we are
grateful to J. Klein for suggesting to us that we try to prove the same result without the multiplicative structure,
by using the additivity theorem.

\subsection{Application to the Joyal-Tierney model structure.}

Now we are going to do some algebraic $K$-theory with internal categories
in a Grothendieck topos. In order to get a Waldhausen category
of internal categories in a Grothendieck topos $\Sh(\mathcal{C},\tau)$, we need to have
a {\em pointed} category of such internal categories; to accomplish this,
we simply take the category of pointed objects in $\Cat\Sh(\mathcal{C},\tau)$, i.e., the undercategory consisting of internal categories $\mathcal{X}$ in $\Sh(\mathcal{C},\tau)$ equipped with a choice of map
from the terminal internal category $1$ to $\mathcal{X}$.
We will focus on the most important special case, the case $\Sh(\mathcal{C},\tau) = \SmCat$.
Following~\cite{MR1650134}, we will write $\SmCat_*$ for the category of pointed objects in small categories. By Proposition~1.1.8 of~\cite{MR1650134},
$\SmCat_*$ admits a model structure whose cofibrations
(respectively, weak equivalences, fibrations) are exactly those maps
whose underlying maps in $\SmCat$
are cofibrations (respectively, weak equivalences, fibrations).
We will call this model structure on $\SmCat_*$
the {\em pointed Joyal-Tierney model structure.}
\begin{comment}
Following~\cite{MR1650134}, we will write $\Cat\Sh(\mathcal{C},\tau)_*$ for the category of pointed objects in internal categories in $\Sh(\mathcal{C},\tau)$. By Proposition~1.1.8 of~\cite{MR1650134},
$\Cat\Sh(\mathcal{C},\tau)_*$ admits a model structure whose cofibrations
(respectively, weak equivalences, fibrations) are exactly those maps
whose underlying maps in $\Cat\Sh(\mathcal{C},\tau)$ 
are cofibrations (respectively, weak equivalences, fibrations).
We will call this model structure on $\Cat\Sh(\mathcal{C},\tau)_*$
the {\em pointed Joyal-Tierney model structure.}\end{comment}

\begin{prop}\label{existence of cone functor}
The category $\SmCat_*$, equipped with the Waldhausen category structure it inherits from the pointed Joyal-Tierney model structure, admits a cone functor in the sense of Definition~\ref{def of cone functor}.
%{\bf (This proof also requires that $\Sh(\mathcal{C},\tau) = \Sets$, but this assumption is probably removable. Search for ``Note 1'' for more.)}
\end{prop}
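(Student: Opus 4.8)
The plan is to exhibit an explicit cone functor built from the indiscrete (``chaotic'') category construction $\mathbf{C}_{(-)}$ already used above. For a pointed small category $\mathcal{C}$, set $P\mathcal{C} := \mathbf{C}_{\ob\mathcal{C}}$, the category with object set $\ob\mathcal{C}$ and exactly one morphism in each hom-set, and let $\eta_{\mathcal{C}}: \mathcal{C} \to P\mathcal{C}$ be the functor which is the identity on objects and sends every morphism to the unique available morphism. Since $\mathbf{C}$ is functorial in the object set (via $\ob$) and $P(1)=1$, this defines a pointed endofunctor $P$ of $\SmCat_*$ together with a natural transformation $\eta:\id\to P$. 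The three ``easy'' axioms of Definition~\ref{def of cone functor} are then immediate: $\eta_{\mathcal{C}}$ is injective on objects, hence a cofibration (Remark~\ref{j-t special case}); $P\mathcal{C}$ is nonempty because $\mathcal{C}$ is pointed, and a nonempty indiscrete category is categorically equivalent to $1$, so $P\mathcal{C}$ is contractible; and for a cofibration $\mathcal{C}\to\mathcal{D}$ the comparison map $\mathcal{D}\coprod_{\mathcal{C}}P\mathcal{C}\to P\mathcal{D}$ is the identity on objects (the pushout is taken along $\eta_{\mathcal{C}}$, which is bijective on objects), hence a cofibration.

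It then remains to check that $P$ is exact, and here only preservation of pushouts along cofibrations requires work: preservation of the zero object is clear, preservation of cofibrations follows as above, and preservation of weak equivalences holds because any functor between two nonempty indiscrete categories is automatically an equivalence of categories. For the pushouts, the object functor $\ob:\SmCat\to\Sets$ is a left adjoint (its right adjoint is $\mathbf{C}$), so it preserves colimits; consequently both $P(\mathcal{D}\coprod_{\mathcal{C}}\mathcal{E})$ and $P\mathcal{D}\coprod_{P\mathcal{C}}P\mathcal{E}$ have object set $\ob\mathcal{D}\coprod_{\ob\mathcal{C}}\ob\mathcal{E}$, and the canonical comparison functor between them is bijective on objects. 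Thus exactness reduces to the following key lemma: if $R\hookrightarrow S$ is an injection of sets, $R\to T$ is arbitrary, and $R\neq\emptyset$, then the pushout $\mathbf{C}_S\coprod_{\mathbf{C}_R}\mathbf{C}_T$ in $\SmCat$ is again indiscrete, i.e.\ isomorphic to $\mathbf{C}_{S\coprod_R T}$.

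I would prove this lemma by verifying directly that $\mathbf{C}_{S\coprod_R T}$ satisfies the universal property of the pushout. The cocone maps are induced by $\mathbf{C}$; given functors $f:\mathbf{C}_S\to\mathcal{X}$ and $g:\mathbf{C}_T\to\mathcal{X}$ agreeing on $\mathbf{C}_R$, the induced functor $h$ is forced on objects and on morphisms between two objects lying on the same side. The real content is defining $h$ on a ``mixed'' morphism $[s]\to[t]$: choosing any $r_0\in R$ (this is exactly where nonemptiness of $R$ is used), one is forced to set $h$ equal to the composite $g(r_0\to t)\circ f(s\to r_0)$, and one checks that this is independent of the choice of $r_0$ and functorial, using that $f$ and $g$ agree on $\mathbf{C}_R$ and that all morphisms in sight are invertible.

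I expect this key lemma to be the main obstacle, since pushouts of categories can in general create unexpected morphisms and one must genuinely control them. It is worth emphasizing that the hypothesis $R\neq\emptyset$ cannot be dropped: when $R=\emptyset$ the pushout degenerates to the coproduct $\mathbf{C}_S\coprod\mathbf{C}_T$, which has no morphisms between the two summands and is therefore not indiscrete. This is precisely why the construction is carried out in the pointed category $\SmCat_*$, where $\ob\mathcal{C}$ is always nonempty; the pointedness is not cosmetic but is exactly what makes $P$ exact.
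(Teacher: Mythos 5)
Your proposal is correct and follows essentially the same route as the paper's proof: the same indiscrete-category cone functor $P$, the same easy verifications, and the same key step of showing that $P$ preserves pushouts along cofibrations by checking the universal property directly, with the mixed morphisms defined via a choice of element of the (nonempty, because pointed) object set of the base and checked to be independent of that choice. Your closing remark correctly identifies where pointedness is genuinely used, exactly as in the paper.
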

\begin{proof}
We give the details explicitly, below, but the intuition here is that the cone functor $P: \SmCat_* \rightarrow \SmCat_*$ wlil be simply the functor whose value on given a small category $\mathcal{C}$ is the category with the same object set as $\mathcal{C}$ and which has, for each pair of objects $X,Y$, a unique morphism $X \rightarrow Y$.

Now we give the details. Let $P: \SmCat_* \rightarrow \SmCat_*$
%Let $P: \Cat\Sh(\mathcal{C},\tau)_* \rightarrow \Cat\Sh(\mathcal{C},\tau)_*$
be the functor defined as follows:
if $\mathcal{X}$ is a pointed small category,
and we agree to write $\mathcal{X}_{ob}$ for the object set of $\mathcal{X}$ and $\mathcal{X}_{mor}$ for the morphism set of $\mathcal{X}$,
$\eta_L,\eta_R: \mathcal{X}_{mor}\rightarrow\mathcal{X}_{ob}$ for the domain and codomain morphisms, 
$\epsilon: \mathcal{X}_{ob}\rightarrow \mathcal{X}_{mor}$ for the identity morphism morphism, 
%$\chi: \mathcal{X}_{mor}\rightarrow \mathcal{X}_{mor}$ for the inverse morphism morphism,
and $\Delta: \mathcal{X}_{mor}\times_{\mathcal{X}_{ob}}\mathcal{X}_{mor} \rightarrow \mathcal{X}_{mor}$ for the composition of morphisms morphism,
then let $P(\mathcal{X})$ be the pointed internal category with $P(\mathcal{X})_{ob} = \mathcal{X}_{ob}$ and $P(\mathcal{X})_{mor} = \mathcal{X}_{ob}\times \mathcal{X}_{ob}$,
with structure maps as follows:
\begin{itemize}
\item the morphism $P(\mathcal{X})_{mor}\rightarrow P(\mathcal{X})_{ob}$, sending a morphism to its domain, is the projection to the first factor
$\mathcal{X}_{ob}\times \mathcal{X}_{ob} \rightarrow \mathcal{X}_{ob}$,
\item the morphism $P(\mathcal{X})_{mor}\rightarrow P(\mathcal{X})_{ob}$, sending a morphism to its codomain, is the projection to the second factor
$\mathcal{X}_{ob}\times \mathcal{X}_{ob} \rightarrow \mathcal{X}_{ob}$,
\item the morphism $P(\mathcal{X})_{ob}\rightarrow P(\mathcal{X})_{mor}$, sending an object to its identity morphism, 
is the diagonal morphism $\mathcal{X}_{ob} \rightarrow \mathcal{X}_{ob}\times \mathcal{X}_{ob}$,
%\item the morphism $P(\mathcal{X})_{mor}\rightarrow P(\mathcal{X})_{mor}$, sending a morphism to its inverse, is the factor swap morphism $\mathcal{X}_{ob}\times \mathcal{X}_{ob} \rightarrow \mathcal{X}_{ob}\times \mathcal{X}_{ob}$,
\item and the morphism $P(\mathcal{X})_{mor}\times_{P(\mathcal{X})_{ob}}\rightarrow P(\mathcal{X})_{mor}$, sending a composable pairs of morphisms to their composite,
is the composite morphism 
\[ (\mathcal{X}_{ob}\times \mathcal{X}_{ob})\times_{\mathcal{X}_{ob}} (\mathcal{X}_{ob}\times \mathcal{X}_{ob})
 \stackrel{\cong}{\longrightarrow} \mathcal{X}_{ob}\times \mathcal{X}_{ob} \times \mathcal{X}_{ob}
 \stackrel{r}{\longrightarrow} \mathcal{X}_{ob}\times \mathcal{X}_{ob}\]
where $r$ is the projection off of the middle factor (i.e., the projection on to the left and right factors).
\end{itemize}
There exists a natural transformation $\id\stackrel{\eta}{\longrightarrow} P$ of functors 
$\SmCat \rightarrow \SmCat$
defined as follows:
given a pointed small category $\mathcal{X}$,
the map $\eta(\mathcal{X}): \mathcal{X}\rightarrow P(\mathcal{X})$
has components
$\eta(\mathcal{X})_{ob}: \mathcal{X}_{ob}\rightarrow P(\mathcal{X})_{ob}$
simply 
the identity map,
and 
$\eta(\mathcal{X})_{mor}: \mathcal{X}_{mor}\rightarrow P(\mathcal{X})_{mor}$
the map $\mathcal{X}_{mor} \rightarrow \mathcal{X}_{ob}\times \mathcal{X}_{ob}$
given by the universal property of the product in $\SmCat$
from the maps $\eta_L,\eta_R: \mathcal{X}_{mor} \rightarrow \mathcal{X}_{ob}$.
It is a routine exercise to check that $P$ is indeed a functor and that
$\eta$ is a natural transformation.

We claim that $(P,\eta)$ is a cone functor on $\SmCat_*$ in the pointed Joyal-Tierney model structure.
Most of this claim is basically trivial to prove:
clearly $\eta(\mathcal{X})$ is a cofibration for each 
small category $\mathcal{X}$,
and clearly $P$ sends cofibrations to cofibrations.
The unique map from $P(\mathcal{X})$ to the terminal internal category $1$
is clearly full and faithful (as in Definition~\ref{model structure}),
and it is essentially surjective as well (the composite map
$\eta_R\circ \iota\circ p$ in Definition~\ref{model structure} is,
after an isomorphism, simply a projection map
$\mathcal{X}_{ob}\times \mathcal{X}_{ob}\rightarrow \mathcal{X}_{ob}$ to a factor), so $P(\mathcal{X})$ is contractible.

We need to know that, 
if $\mathcal{X}\stackrel{f}{\longrightarrow} \mathcal{Y}$
is a weak equivalence in $\SmCat_*$,
then $P(f)$ is a weak equivalence.
In fact, more is true: $P(f)$ is a weak equivalence, even if $f$ is not!
This is simply because we have the commutative diagram
\[ \xymatrix{ P\mathcal{X} \ar[r]^{P(f)} \ar[d]^{we} & P\mathcal{Y}\ar[ld]^{we} \\ 1 & }\]
in which the two maps marked $we$ are weak equivalences, and now
the two-out-of-three property of weak equivalences in a model category tells us that $P(f)$ is also a weak equivalence.

If $\mathcal{X}\rightarrow \mathcal{Y}$
is a cofibration in $\SmCat_*$,
then we need to know that the map $\mathcal{Y}\coprod_{\mathcal{X}} P(\mathcal{X})\rightarrow P(\mathcal{Y})$ is a cofibration as well.
This follows, however, from the observation that the functor
\begin{align*}
 U: \SmCat_* &\rightarrow \Sets_* \\
 U(\mathcal{X}) & =  \mathcal{X}_{ob} \end{align*}
has a right adjoint, given by sending a pointed set $Y$
to the pointed small category
whose object set is $Y$, whose morphism set is $Y$,
and whose structure maps are all the identity map on $Y$.
Hence $U$ preserves colimits,
in particular, pushouts.
Hence the fact that $\eta(\mathcal{X}): \mathcal{X}\rightarrow P(\mathcal{X})$ induces an isomorphism 
$\eta(\mathcal{X})_{ob}: \mathcal{X}_{ob}\stackrel{\cong}{\longrightarrow} P(\mathcal{X})_{ob}$ in $\SmCat_*$
gives us that
$\left(\mathcal{Y}\coprod_{\mathcal{X}} P(\mathcal{X})\right)_{ob}\rightarrow P(\mathcal{Y})_{ob}$
is a monomorphism, hence that 
$\mathcal{Y}\coprod_{\mathcal{X}} P(\mathcal{X})\rightarrow P(\mathcal{Y})$ is a cofibration.

All that remains is to check that $P$ preserves pushouts along
a cofibration. Let $\mathcal{X},\mathcal{Y},\mathcal{Z}$ be 
pointed small categories,
and let $\mathcal{X}\stackrel{}{\longrightarrow} \mathcal{Y}$
and $\mathcal{X}\stackrel{}{\longrightarrow} \mathcal{Z}$
be morphisms in $\SmCat_*$.
Since $P(\mathcal{Y}\coprod_{\mathcal{X}} \mathcal{Z})$ and
$P\mathcal{Y}\coprod_{P\mathcal{X}} P\mathcal{Z}$
depend only on 
$(\mathcal{Y}\coprod_{\mathcal{X}} \mathcal{Z})_{ob}$
and on 
$\mathcal{Y}_{ob},\mathcal{X}_{ob},$ and $\mathcal{Z}_{ob}$,
and since the functor $U: \SmCat_* \rightarrow \Sets_*$, defined above, preserves pushouts,
to show that the canonical map 
\[ P\mathcal{Y}\coprod_{P\mathcal{X}} P\mathcal{Z}
 \rightarrow P(\mathcal{Y}\coprod_{\mathcal{X}} \mathcal{Z})\]
is an isomorphism in $\SmCat_*$ is equivalent
to simply showing that, if
$X,Y,Z$ are objects of $\Sets_*$
and $ f: X \rightarrow Y$ and $g: X \rightarrow Z$
are morphisms in $\Sets_*$
with $f$ a monomorphism,
then the canonical map
\begin{equation}\label{comparison map 22} \mathcal{F}(Y) \coprod_{\mathcal{F}(X)} \mathcal{F}(Z) 
 \stackrel{\gamma}{\longrightarrow} \mathcal{F}(Y\coprod_X Z)\end{equation}
in $\SmCat_*$ is an isomorphism,
where $\mathcal{F}: \Sets_* \rightarrow \SmCat_*$ is the functor that sends an object $W$ to 
the internal category $(W, W\times W)$, so that
$P$ factors as $P = \mathcal{F}\circ U$. 

We will now show that the map~\eqref{comparison map 22} is an isomorphism
by producing an inverse map $\delta$ to $\gamma$.
To specify such a map $\delta$, it suffices to give a natural transformation
from $\hom_{\SmCat_*}(\mathcal{F}(Y)\coprod_{\mathcal{F}(X)} \mathcal{F}(Z), -)$
to
$\hom_{\SmCat_*}(\mathcal{F}(Y\coprod_X Z), -)$
which is inverse to the natural transformation corepresented by $\gamma$.
Suppose that $\mathcal{T}$ is an object of 
$\SmCat_*$, and suppose we are given a 
morphism 
$\mathcal{F}(Y)\coprod_{\mathcal{F}(X)} \mathcal{F}(Z) \rightarrow \mathcal{T}$
in $\SmCat_*$, i.e., 
we are given 
a pair of morphisms
$a: Y \rightarrow \mathcal{T}_{ob}$
and 
$b: Z \rightarrow \mathcal{T}_{ob}$
in $\Sets_*$ which agree on $X$,
and a compatible pair of morphisms
$f: Y\times Y \rightarrow \mathcal{T}_{mor}$
and 
$g: Z\times Z \rightarrow \mathcal{T}_{mor}$
in $\Sets_*$ which agree on $X\times X$.
From this data we need to construct a morphism
$\mathcal{F}(Y\coprod_X Z) \rightarrow \mathcal{T}$.
%{\bf (Note 1: starting here, this proof assumes $\Sh(\mathcal{C},\tau) = \Sets$.
%It is probably possible to adapt this proof to work much more generally, using the compatibility properties of products and pushouts that hold in a Grothendieck topos. Can you see how to do this?)}
On objects, 
\[ \left(\mathcal{F}(Y\coprod_X Z)\right)_{ob} = Y\coprod_X Z 
\rightarrow \mathcal{T}_{ob}\]
is simply given by $a$ and $b$ and the universal property of
the pushout.
On morphisms
\[ \left(\mathcal{F}(Y\coprod_X Z)\right)_{mor} = 
  \left( Y\coprod_X Z \right) \times \left( Y\coprod_X Z \right)
 \rightarrow \mathcal{T}_{mor}\]
is given by sending $(y_1,y_2)\in Y\times Y$ to
$f(y_1,y_2)$, sending 
$(z_1,z_2)\in Z\times Z$ to
$g(z_1,z_2)$, sending $(y,z)\in Y\times Z$
to the composite $g(x,z) \circ f(y,x)$ where $x$ is any element of $X$ (such an element exists: $X$ is necessarily nonempty since it is pointed), 
and by sending $(z,y)\in Z\times Y$
to the composite $f(x,y) \circ g(z,x)$ where $x$ is any element of $X$.
Checking this is well-defined is left as an exercise, but we do offer an explanation of the idea behind this construction:
if $W$ is a set, then $\mathcal{F}(W)$ is the small category whose set of objects is $W$, and which has, for every pair of objects
$w,w^{\prime}\in W$, exactly one morphism $w\rightarrow w^{\prime}$. If we have some small category $\mathcal{T}$ and 
chosen functors $f: \mathcal{F}(Y) \rightarrow \mathcal{T}$ and $g: \mathcal{F}(Z) \rightarrow \mathcal{T}$ which coincide
on $\mathcal{F}(X)$, and we want to use this information to specify a functor $\mathcal{F}(Y\coprod_X Z) \rightarrow \mathcal{T}$,
then it is clear that this functor should coincide with $f$ on $\mathcal{F}(Y)\subseteq \mathcal{F}(Y\coprod_X Z)$,
and that this functor should coincide with $g$ on $\mathcal{F}(Z)\subseteq \mathcal{F}(Y\coprod_X Z)$.
This determines everything about the behavior of the functor $\mathcal{F}(Y\coprod_X Z) \rightarrow \mathcal{T}$ {\em except}
for its effect on morphisms $y\rightarrow z$ and morphisms $z\rightarrow y$ in $\mathcal{F}(Y\coprod_X Z)$,
where $y\in Y$ and $z\in Z$. But there is a natural way to define our functor on these morphisms: 
given the (unique) morphism $h: y\rightarrow z$ in $\mathcal{F}(Y\coprod_X Z)$,
choose an element $x\in X$, and now 
in $\mathcal{F}(Y\coprod_X Z)$ there is a unique morphism $h_1: y\rightarrow x$ and a unique morphism $h_2: x\rightarrow z$,
and $h_2\circ h_1  = h$. Clearly we should let our desired functor 
$\mathcal{F}(Y\coprod_X Z) \rightarrow \mathcal{T}$ take the value $g(h_2)\circ f(h_1)$ on the morphism $h$,
and the uniqueness of morphisms with a given domain and codomain in $\mathcal{F}(Y\coprod_X Z)$ ensures that this definition
does not depend on the choice of $x$. With the obvious modification (switching the roles of $Y$ and $Z$)
this same construction defines
the functor $\mathcal{F}(Y\coprod_X Z) \rightarrow \mathcal{T}$ on the morphisms $z\rightarrow y$ as well.

We have now defined a function
\begin{equation}\label{nat trans 1}\hom_{\SmCat_*}(\mathcal{F}(Y)\coprod_{\mathcal{F}(X)} \mathcal{F}(Z), \mathcal{T}) \rightarrow \hom_{\SmCat_*}(\mathcal{F}(Y\coprod_X Z), \mathcal{T})\end{equation}
for each object $\mathcal{T}$ in $\SmCat_*$, and function~\eqref{nat trans 1} is natural in the variable
$\mathcal{T}$.
Hence~\eqref{nat trans 1} defines a morphism $\mathcal{F}(Y\coprod_X Z) \rightarrow \mathcal{F}(Y)\coprod_{\mathcal{F}(X)} \mathcal{F}(Z)$.
It is routine to verify that this morphism is inverse to the canonical morphism $\gamma$ from~\eqref{comparison map 22}.

Hence the map $\gamma$ is an isomorphism,
hence $\mathcal{F}$ preserves pushouts, as desired.
Hence $P$ preserves pushouts, and in particular, pushouts along a cofibration.
\end{proof}

\begin{prop}\label{double suspension is contractible}
For each object $X$ of $\SmCat_*$, the second suspension $\Sigma^2 X$ of $X$ is weakly equivalent to the terminal object in $\SmCat_*$.
%Let $(\mathcal{C},\tau)$ be the site of sheaves of sets on a connected topological space.
%For each object $X$ of $\Cat\Sh(\mathcal{C},\tau)_*$, the second suspension $\Sigma^2 X$ of $X$ is weakly equivalent to the terminal object in $\Cat\Sh(\mathcal{C},\tau)_*$.
%{\bf (This proof refers to Proposition~\ref{existence of cone functor}, but does not require that $\Sh(\mathcal{C},\tau) = \Sets$.)}
\end{prop}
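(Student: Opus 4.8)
The plan is to compute the suspension functor $\Sigma$ explicitly on objects, using the cone functor $P$ from Proposition~\ref{existence of cone functor}, and then to observe that a single application of $\Sigma$ already lands among the connected (one-object) categories, so that a second application collapses everything to the terminal object. First I would identify $\Sigma Y$ for an arbitrary pointed small category $Y$. By Definition~\ref{def of cone functor}, $\Sigma Y$ is the pushout of $0 \leftarrow Y \xrightarrow{\eta(Y)} PY$, where $PY$ is the chaotic category on the object set $Y_{ob}$ (exactly one morphism $\phi_{s',s}\colon s\to s'$ for each ordered pair, since $PY_{mor}=Y_{ob}\times Y_{ob}$) and $0$ is the terminal category $1$. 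Since the forgetful functor $U\colon \SmCat_*\to\Sets_*$ sending a category to its object set preserves pushouts (as established in the proof of Proposition~\ref{existence of cone functor}), and since $\eta(Y)$ is the identity on objects while $Y\to 0$ is constant on objects, the object set of $\Sigma Y$ is the pushout $Y_{ob}\coprod_{Y_{ob}}\{*\}\cong\{*\}$: \emph{every suspension has exactly one object.}

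Next I would compute the endomorphism monoid of that unique object. Writing $g_{s',s}$ for the image of $\phi_{s',s}$, the monoid $\hom_{\Sigma Y}(*,*)$ is presented by the generators $g_{s',s}$ subject to the composition relations $g_{s'',s'}g_{s',s}=g_{s'',s}$ and $g_{s,s}=1$ inherited from $PY$, together with the relation $g_{s',s}=1$ for every morphism $s\to s'$ of $Y$, forced by the identification coming from the map $Y\to 0$. The composition relations alone give $g_{s',s}g_{s,s'}=g_{s,s'}g_{s',s}=1$, so every generator is invertible and $\Sigma Y$ is a one-object groupoid, i.e.\ a group. Fixing the basepoint object $s_0$ and setting $h_s:=g_{s,s_0}$, one has $g_{s',s}=h_{s'}h_s^{-1}$, and the remaining relations reduce to $h_s=h_{s'}$ whenever $s,s'$ lie in the same connected component of $Y$, together with $h_{s_0}=1$. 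Thus $\Sigma Y$ is the free group on the set $\pi_0(Y)\setminus\{*\}$ of non-basepoint connected components of $Y$. As a sanity check this recovers $\Sigma S^0\simeq\mathbb{Z}$ and $\Sigma\mathbb{Z}\simeq 1$, consistent with Proposition~\ref{spheres}.

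Finally I would apply this computation twice. Since $\Sigma X$ has a single object it is connected, so $\pi_0(\Sigma X)$ is a single point and $\pi_0(\Sigma X)\setminus\{*\}=\emptyset$. Therefore $\Sigma^2 X=\Sigma(\Sigma X)$ is the free group on the empty set, namely the trivial group, which is precisely the terminal category $1$. Hence $\Sigma^2 X$ is isomorphic to, and in particular weakly equivalent to, the terminal object of $\SmCat_*$, as claimed.

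The hard part will be the rigorous justification of the morphism computation, since pushouts in $\SmCat$ are delicate: amalgamating categories can in principle manufacture unexpected composites. The crux is to check that, once the object set has collapsed to a point, the chaotic structure of $PY$ makes the generators invertible and introduces no relations beyond those listed, so that the endomorphism monoid is exactly the stated group. This is cleanest to verify through the universal property of the pushout, describing functors out of $\Sigma Y$ as functors out of $PY$ whose composite with $\eta(Y)$ is constant. One should also confirm that the pushout computed in the pointed category $\SmCat_*$ agrees with the one computed in $\SmCat$, which holds because $0$ is a zero object.
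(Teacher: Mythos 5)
Your argument is correct, and its first half coincides with the paper's: both proofs observe that the object-set functor $U\colon \SmCat_*\to\Sets_*$ preserves pushouts and that $\eta(Y)$ is bijective on objects, so $(\Sigma Y)_{ob}$ is a single point. Where you diverge is in the second application of $\Sigma$. You compute the endomorphism monoid of the unique object of $\Sigma Y$ in full, identifying $\Sigma Y$ with the free group on $\pi_0(Y)\setminus\{*\}$; this is correct (the universal-property argument you sketch does go through: a pointed functor $\Sigma Y\to\mathcal{T}$ is a functor $PY\to\mathcal{T}$ constant at an object $t$ whose restriction along $\eta(Y)$ kills every morphism of $Y$, and the chaotic structure of $PY$ forces every $g_{s',s}$ to be invertible with $g_{s',s}=h_{s'}h_s^{-1}$, leaving exactly the component identifications as relations). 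But the ``hard part'' you flag --- justifying the morphism-level pushout computation --- is entirely avoidable, and the paper avoids it: once one knows $(\Sigma X)_{ob}\cong 1$, one observes that $P$ applied to \emph{any} one-object category is $(1,1\times 1)$, i.e.\ the terminal category itself, so $\Sigma(\Sigma X)=P(\Sigma X)\coprod_{\Sigma X}0$ is a pushout of terminal objects along a nonempty category and hence terminal, with no presentation of morphisms required. Your route buys a genuinely stronger statement (an explicit formula for $\Sigma$ of an arbitrary pointed small category, which correctly recovers $\Sigma S^0\simeq\mathbb{Z}$ and $\Sigma\mathbb{Z}\simeq 1$), at the cost of the one delicate verification in the whole argument; the paper's route proves only what is needed but sidesteps that verification completely. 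If you keep your version, do carry out the universal-property check rather than the generators-and-relations description, since presentations of pushouts of categories are exactly where hidden composites can appear.
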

\begin{proof}
The suspension $\Sigma X$ is, up to weak equivalence, the pushout of the diagram
\begin{equation}\label{htpy pushout diagram 1}\xymatrix{
 X \ar[r]^{\eta(X)} \ar[d] & PX \\ 1 & 
}\end{equation}
where $PX$ is as in Proposition~\ref{existence of cone functor}.
(One can take this statement in either of two ways: either by the definition of $\Sigma$ as in Definition~\ref{def of cone functor},
$\Sigma X$ is the pushout of the diagram~\eqref{htpy pushout diagram 1}; or, by the definition of suspension in a pointed model category,
$\Sigma X$ is the homotopy pushout of the diagram
\begin{equation*}%\label{}
\xymatrix{
 X \ar[r]^{} \ar[d] & 1 \\ 1 .& 
}\end{equation*}
Since every object in the Joyal-Tierney model structure on $\SmCat$ is cofibrant, pushouts of weak equivalences along
cofibrations in $\SmCat$ are weak equivalences, by Theorem~B in~\cite{reedy}. 
Hence $\SmCat$ is left proper, hence the pushout of the diagram~\eqref{htpy pushout diagram 1} is a homotopy pushout,
since $\eta(X)$ is a cofibration. So whether one takes $\Sigma X$ to mean suspension in the sense of Definition~\ref{def of cone functor}, or
suspension in the sense of the suspension defined in any model category, in either case the pushout of diagram~\eqref{htpy pushout diagram 1} is 
$\Sigma X$, at least up to weak equivalence.)

Now here are two observations about the pushout of the diagram~\eqref{htpy pushout diagram 1}:
\begin{enumerate}
\item The pushout $\Sigma X$ has the property that $(\Sigma X)_{ob} \cong 1$,
the terminal object in $\SmCat$, since the functor $W\mapsto W_{ob}: \SmCat_* \rightarrow \Sets_*$ preserves pushouts and since $(\eta(X))_{ob}$ is an isomorphism.
\item If $X_{ob} \cong 1$, then $PX$ is the terminal object in $\SmCat_*$, so $\Sigma X$ is a quotient of the terminal object,
i.e., $\Sigma X$ is terminal.
\end{enumerate}
Consequently $\Sigma(\Sigma X)$ is terminal in $\SmCat_*$.
\end{proof}

\begin{theorem}\label{k-thy vanishing thm 2}
Equip $\SmCat_*$ with the Waldhausen category structure inherited from the Joyal-Tierney model structure.
Let $\mathcal{A}$ be any full sub-Waldhausen-category of $\SmCat_*$ satisfying the following conditions:
\begin{itemize}
\item pushouts along cofibrations computed in $\mathcal{A}$ agree with the same pushouts computed in $\SmCat_*$, and
\item if $(\mathcal{T}_{ob},\mathcal{T}_{mor})$ is an object in $\mathcal{A}$, then so is $(\mathcal{T}_{ob}, \mathcal{T}_{ob}\times \mathcal{T}_{ob})$, with structure maps defined as in the proof of Proposition~\ref{existence of cone functor}.
%Let $\Sh(\mathcal{C},\tau)$ be the site of sheaves of sets on a connected topological space, and let $\Cat\Sh(\mathcal{C},\tau)_*$ be the category of pointed internal categories in $\Sh(\mathcal{C},\tau)$, equipped with the Waldhausen category structure underlying the Joyal-Tierney model structure.
%Let $\mathcal{A}$ be any full sub-Waldhausen-category of $\Cat\Sh(\mathcal{C},\tau)_*$ with the property that, if $(\mathcal{T}_{ob},\mathcal{T}_{mor})$ is an object in $\mathcal{A}$, then so is $(\mathcal{T}_{ob}, \mathcal{T}_{ob}\times \mathcal{T}_{ob})$, with structure maps defined as in the proof of Proposition~\ref{existence of cone functor}.
\end{itemize}

Then the algebraic $K$-theory space $\left| wS.\mathcal{A}\right|$ of $\mathcal{A}$ is contractible, and hence the algebraic $K$-theory groups
$K_n(\mathcal{A})$ are trivial for all $n\geq 0$.
\end{theorem}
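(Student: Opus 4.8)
The plan is to apply Proposition~\ref{k-thy vanishing thm} directly to $\mathcal{A}$, taking $n = 2$. For this I need two inputs: that $\mathcal{A}$ admits a cone functor, and that the second suspension of every object of $\mathcal{A}$ is contractible. The two hypotheses imposed on $\mathcal{A}$ are precisely engineered to transport the corresponding facts already established for $\SmCat_*$ in Proposition~\ref{existence of cone functor} and Proposition~\ref{double suspension is contractible} down to the full subcategory $\mathcal{A}$.

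First I would check that the cone functor $P$ of Proposition~\ref{existence of cone functor} restricts to a functor $\mathcal{A}\to\mathcal{A}$. Since $P(\mathcal{X}) = (\mathcal{X}_{ob}, \mathcal{X}_{ob}\times\mathcal{X}_{ob})$ with the structure maps described there, the second hypothesis on $\mathcal{A}$ says exactly that $P$ carries objects of $\mathcal{A}$ to objects of $\mathcal{A}$; because $\mathcal{A}$ is a full subcategory, both $P$ and the natural transformation $\eta\colon \id \to P$ restrict without further comment. Next I would verify that this restriction is again a cone functor in the sense of Definition~\ref{def of cone functor}. Contractibility of $PX$ and the fact that each $\eta(X)$ is a cofibration are inherited immediately, since $\mathcal{A}$ carries the cofibrations and weak equivalences of $\SmCat_*$. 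For the remaining cone-functor axiom, that $Y\coprod_X PX \to PY$ is a cofibration for each cofibration $X\to Y$, and for the exactness of $P$ (in particular its preservation of pushouts along cofibrations), I would invoke the first hypothesis: the pushouts involved, computed in $\mathcal{A}$, coincide with those computed in $\SmCat_*$, so the conclusions already proved in Proposition~\ref{existence of cone functor} transfer verbatim.

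With the cone functor in hand, $\Sigma$ and $\Sigma^2$ are defined on $\mathcal{A}$ via the pushout $\Sigma X = PX\coprod_X 0$. Because this is a pushout along the cofibration $\eta(X)$, the first hypothesis again guarantees that $\Sigma X$, and hence $\Sigma^2 X$, computed in $\mathcal{A}$ agrees up to isomorphism with the same construction performed in $\SmCat_*$. Proposition~\ref{double suspension is contractible} then tells us that $\Sigma^2 X$ is weakly equivalent to the terminal (equivalently, zero) object, i.e.\ contractible, for every object $X$ of $\mathcal{A}$. Thus the hypotheses of Proposition~\ref{k-thy vanishing thm} are satisfied with $n = 2$, and that proposition yields the contractibility of $\left| wS.\mathcal{A}\right|$ together with the vanishing of all $K_n(\mathcal{A})$.

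The only genuinely delicate point, and the one I would treat most carefully, is the bookkeeping that lets the two hypotheses on $\mathcal{A}$ do their work: I must confirm that every pushout appearing in the cone-functor axioms and in the definition of $\Sigma$ really is a pushout along a cofibration, so that the first hypothesis applies, and that every object produced along the way, namely the cones $PX$, the pushouts $Y\coprod_X PX$, and the suspensions $\Sigma X$, actually lands in $\mathcal{A}$, so that the restricted functor is well defined. Once this compatibility is verified, no new homotopy-theoretic input is needed beyond the three cited propositions.
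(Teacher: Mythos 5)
Your argument is correct and is exactly the paper's proof: the paper simply declares the theorem an immediate consequence of Proposition~\ref{k-thy vanishing thm}, Proposition~\ref{existence of cone functor}, and Proposition~\ref{double suspension is contractible}, leaving implicit the bookkeeping you spell out (that the two hypotheses on $\mathcal{A}$ let the cone functor and the contractibility of $\Sigma^2$ descend to the full subcategory). Your more careful verification that every pushout involved is along a cofibration and that all intermediate objects land in $\mathcal{A}$ is a welcome elaboration, not a deviation.
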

\begin{proof}
Immediate consequence of Proposition~\ref{k-thy vanishing thm}, Proposition~\ref{existence of cone functor}, and Proposition~\ref{double suspension is contractible}.
\end{proof}

%\bibliography{/home/asalch/texmf/tex/salch}{}
\bibliography{salch}{}
\bibliographystyle{plain}
\end{document}